\newtheorem{theorem}{Theorem}
\newtheorem{definition}{Definition}
\newtheorem{assumption}{Assumption}
\newtheorem{proposition}{Proposition}[section]
\newtheorem{remark}{Remark}[section]
\newtheorem{lemma}{Lemma}[section]
\newtheorem{corollary}{Corollary}[section]
\numberwithin{equation}{section}
\title{Stochastic Recursive Momentum Method for Non-Convex Compositional Optimization}
\author{Huizhuo~Yuan
\\
\texttt{huizhuo.yuan@gmail.com}
\AND
Wenqing~Hu%\thanks{} 
  \\
  Department of Mathematics and Statistics\\
  Missouri University of Science and Technology (formerly University of Missouri, Rolla)\\
  Rolla, MO, 65401, USA \\
  \texttt{huwen@mst.edu} \\
}
\begin{document}

\maketitle

\begin{abstract}
We propose a novel optimization algorithm called STOchastic Recursive Momentum for Compositional (STORM-Compositional) optimization that minimizes the composition of expectations of two stochastic functions, the latter being an optimization problem arising in various important machine learning applications. By introducing the momentum term in the compositional gradient updates, STORM-Compositional operates the stochastic recursive variance-reduced compositional gradients in an exponential-moving average way. This leads to an $\mathcal{O}(\ve^{-3})$ complexity upper bound for STORM-Compositional, that matches the best known complexity bounds in previously announced compositional optimization algorithms. At the same time, STORM-Compositional is a single loop algorithm that avoids the typical alternative tuning between large and small batch sizes, as well as recording of checkpoint gradients, that persist in variance-reduced stochastic gradient methods. This allows considerably simpler parameter tuning in numerical experiments, which
demonstrates the superiority of STORM-Compositional over other stochastic compositional optimization algorithms.
\end{abstract}

\section{Introduction}\label{Sec:Intro}

We revisit here the compositional optimization problem that takes the following general form

\begin{equation}\label{Eq:CompositionalOptimization}
\min\limits_{x\in \mathbb{R}^d}\left\{\Phi(x)=f(g(x))\right\} \ ,
\end{equation}
where 
\begin{equation}\label{Eq:ObjectiveFunction:f-g}
f(y)=\dfrac{1}{n}\sum\limits_{i=1}^n f_i(y) \ ,
 \ g(x)=\dfrac{1}{m}\sum\limits_{j=1}^m g_j(x) \ .
\end{equation}

Here the outer and inner functions $f_i: \mathbb{R}^l\rightarrow \mathbb{R}$ and $g_j: \mathbb{R}^d\rightarrow \mathbb{R}^l$ are smooth but not 
necessarily convex. Such compositional optimization is important as it can be formulated to fit many practical machine learning problems, examples include risk-adverse portfolio management (see \cite{Portfolio-Optimization}, \cite{SARAH-SCGD}), reinforcement learning (see \cite{[Sutton-Barto]}) and stochastic neighborhood embedding (see \cite{HintonSNE}, \cite{SCVR}), etc. . To solve \eqref{Eq:CompositionalOptimization} using a gradient-based algorithm, we form the \textit{compositional gradient} of $\Phi$ given by 

\begin{equation}\label{Eq:CompositionalGradient}
\nabla \Phi(x)=(\partial g(x))^T\nabla f(g(x)) \ .
\end{equation}

Here $\partial g: \mathbb{R}^d\rightarrow \mathbb{R}^l$ is the Jacobian matrix of the function $g$, and $\nabla f\in \mathbb{R}^l$ is the gradient of the function $f$. 
Generally, if $q=q(x)$ is some quantity and there is an existing scheme $ \ \widehat{} : q\rightarrow \widehat{q}$ that turns the quantity $q$ into its stochastic estimator $\widehat{q}$, then the compositional gradient estimator can be built simply by first obtaining $\widehat{g}$, $\widehat{\partial g}$, $\widehat{\nabla f}$ and then compose $\widehat{\nabla\Phi}=(\widehat{\partial g})^T\widehat{\nabla f}(\widehat{g})$. This idea has been adopted in many previously proposed algorithms, such as SCGD and Acc-SCGD (see \cite{SCGD-Wang2017}), ASC-PG (see \cite{Acc-SCGD-Wang2017}), where the stochastic gradient estimates are obtained via vanilla SGD or accelerated SGD. Later, variance-reduced stochastic gradient estimators are incorporated into this scheme, such as SCVR (see \cite{SCVR}), VRSC-PG (see \cite{VRSC-PG}) and SARAH-Compositional (abbreviated as SARAH-C throughout, see \cite{SARAH-SCGD}). The so-far best upper bound for the IFO (Incremental First order Oracle) complexity of compositional optimization algorithms that can reach $\ve$-accuracy is given by $\mathcal{O}(\ve^{-3})$ via SARAH-C (see \cite{SARAH-SCGD}), which uses successive SARAH (see \cite{SARAH}) estimators (a kind of variance reduced stochastic estimator) of the variables $g$, $\partial g$, $\nabla f$ and finally composed them using the aforementioned scheme to obtain an estimator for $\nabla \Phi$. 

As a variance-reduced stochastic gradient method, SARAH-C shares the common feature that it uses typical alternative tuning between large and small batch sizes, as well as recording of checkpoint gradients, which persist in variance-reduced stochastic gradient methods. This causes difficulties in parameter tuning, since balancing various unknown problem parameters exactly in order to obtain improved performance is a challenging and delicate task. To resolve this challenge, a recent work STOchastic Recursive Momentum estimator (STORM, \cite{STORM}) proposes to tune the momentum term within a single-loop algorithm to reach the same efficiency for variance-reduction. Based on this innovation, we propose here yet another compositional optimization algorithm called STOchastic Recursive Momentum for Compositional (STORM-Compositional) optimization. By introducing the momentum term in the compositional gradient updates, STORM-Compositional operates the stochastic recursive compositional gradients in an exponential-moving average way. This leads to the same $\mathcal{O}(\ve^{-3})$ complexity upper bound, that matches the best known complexity bounds in SARAH-C. At the same time, STORM-Compositional is a single loop algorithm that avoids batchsize/learning rate tuning and checkpoint recording, as compared with SARAH-C. This allows considerably simpler parameter tuning in numerical experiments, which
demonstrates the superiority of STORM-Compositional over other stochastic compositional optimization algorithms.

The paper is organized as follows: Section \ref{Sec:Setup-Algorithm} introduces the mathematical assumptions, set-up of the problem as well as the STORM-Compositional Algorithm. Section \ref{Sec:ConvergenceAnalysis} analyzes the convergence
of STORM-Compositional and shows that under appropriate parameter setting it can reach $\mathcal{O}(\ve^{-3})$ complexity. Section \ref{Sec:Experiment} demonstrates a numerical experiment of STORM-Compositional over SARAH-C and other known compositional optimization algorithms based on the example of portfolio management problem, that validates the effectiveness of STORM-Compositional. 

\textbf{Notations.} Throughout the paper, we denote by $\|x\|$ the Euclidean norm of a vector $x\in \R^d$. If $M\in \R^{d_1\times d_2}$ is a matrix of size $d_1$ by $d_2$, then $\|M\|$ denotes the operator norm of $M$ induced by the Euclidean norm. We also use $\|M\|_F$ to denote the Frobenious norm of $M$. The standard inner product in Euclidean space $\R^d$ is denoted by $\langle x, y \rangle$. Probabilities and expectations are denoted by $\Prob$ and $\E$. We denote $p_n = \cO(q_n)$ (or $p_n \sim \cO(q_n)$) if there exist some constants $0 < c < C < \infty$ such that $cq_n \leq p_n \leq Cq_n$ as $n$ becomes large. If only one-sided inequality holds, say $p_n\leq Cq_n$, then we denote $p_n\lesssim \cO(q_n)$. We denote by $q(x, \cB)$ to be the minibatch stochastic estimator of the object $q(x)=\dfrac{1}{n}\sum\li_{i=1}^n q_i(x)$ under minibatch $\cB=\{i_1, ..., i_B\}\subseteq \{1,2,...,n\}$ with batchsize $B$, i.e. $q(x, \cB)=\dfrac{1}{B}\sum\li_{i\in \cB} q_i(x)$. Other notations are explained at their first appearances. 

\textbf{Statement of Contributions.} Huizhuo Yuan brought Wenqing Hu's attention to this problem and participated in one small discussion when Wenqing Hu raised the question on the mini-batch sampling with replacement, and another discussion about a question raised by Wenqing Hu regarding the missing bound for $\|\boldG_t\|$ in \cite{arXivSARAH-SCGD}, the latter leading Wenqing Hu to look at \cite{XiaoEtAlCompositional} instead of \cite{arXivSARAH-SCGD}. Wenqing Hu performed all the mathematical proofs in this work and worked on the numerical experiment for SNE. Wenqing Hu worked on the overall structure of the presentation and the write-up of the whole paper. 

\textbf{Acknowledgement.}  The first two numerical experiments of this work are done by Dr. Jiaojiao Yang from the School of Mathematics and Statistics, Anhui Normal University. Due to no initial involvement into the project and upon graceful agreement with Jiaojiao Yang, she is not listed as an author here. Still, Wenqing Hu would like to thank Jiaojiao Yang for the hard work in the numerical experiments.

\section{Assumptions and the STORM-Compositional Algorithm}\label{Sec:Setup-Algorithm}

To compare, our assumptions about $g$, $\partial g$ , $\nabla f$ and the objective function $\Phi$ are following the SARAH-C paper \cite{SARAH-SCGD}, \cite{arXivSARAH-SCGD}.

\begin{assumption}[Finite Gap]\label{Assumption:FiniteGap}
We assume that the algorithm is initialized at $x_0\in \mathbb{R}^d$ with
\begin{equation}\label{Assumption:FiniteGap:Eq:FiniteGap}
\Delta:=\Phi(x_0)-\Phi^*<\infty \ ,
\end{equation}
where $\Phi^*$ denotes the global minimum value of $\Phi(x)$.
\end{assumption}

\begin{assumption}[Smoothness]\label{Assumption:Smoothness}
There exists Lipschitz constants $L_f, L_g, L_\Phi$ such that for $i\in \{1,...,n\}$, $j\in \{1,...,m\}$ we have 
\begin{equation}\label{Assumption:Smoothness:Eq:Lipschitz}
\begin{array}{lll}
\|\partial g_j(x)-\partial g_j(x')\|_F&\leq L_g\|x-x'\| & \text{ for } x, x'\in \mathbb{R}^d  \ ,
\\
\|\nabla f_i(y)-\nabla f_i(y')\| & \leq L_f\|y-y'\|& \text{ for } y, y'\in \mathbb{R}^l \ ,
\\
\|(\partial g_j(x))^T\nabla f_i(g(x))-(\partial g_j(x'))^T\nabla f_i(g(x'))\| & \leq L_\Phi\|x-x'\|& \text{ for } x, x'\in \mathbb{R}^d \ .
\end{array}
\end{equation}
\end{assumption}

\begin{assumption}[Boundedness]\label{Assumption:Boundedness}
There exist boundedness constants $M_g , M_f>0$ such that for $i\in \{1,...,n\}$ and $j\in \{1,...,m\}$ we have
\begin{equation}\label{Assumption:Boundedness:Eq:Bounds}
\begin{array}{ll}
\|\partial g_j(x)\|\leq M_g & \text{ for } x\in \mathbb{R}^d  \ ,
\\
\|\nabla f_i(y)\|\leq M_f & \text{ for } y\in \mathbb{R}^l \ .
\end{array}
\end{equation}
\end{assumption}
\vspace{-2mm}
Notice that \eqref{Assumption:Boundedness:Eq:Bounds} directly implies that for any $j\in \{1,2,...,m\}$ we have
\vspace{-1mm}
\begin{equation}\label{Eq:Lipschitz-g}
\|g_j(x)-g_j(x')\|\leq M_g\|x-x'\| \ \text{ for } x, x'\in \mathbb{R}^d \ .
\end{equation}
\vspace{-0.2mm}
Also notice that under the above two assumptions, a choice of $L_\Phi$ can be expressed as a polynomial of $L_f, L_g, M_f, M_g$. For clarity purposes in the rest of this
paper, we adopt the following typical choice of $L_\Phi$, that
\begin{equation}\label{Eq:Choice-L-Phi}
L_\Phi=M_fL_g+M_g^2L_f \ .
\end{equation}

\begin{assumption}[Bounded Variance]\label{Assumption:FiniteVariance}
We assume that there exist positive constants $H_1$, $H_2$ and $H_3$ as the upper bounds on the variance of the functions $\nabla f(y)$, $\partial g(x)$ and $g(x)$, respectively, such that
\begin{equation}\label{Assumption:FiniteVariance:Eq:BoundsVariance}
\begin{array}{lll}
\mathbf{E}\|\nabla f_i(y)-\nabla f(y)\|^2 & \leq H_1 & \text{ for } y\in \mathbb{R}^l \ ;
\\
\mathbf{E}\|\partial g_j(x)-\partial g(x)\|_F^2 & \leq H_2 & \text{ for } x\in \mathbb{R}^d \ ;
\\
\mathbf{E}\|g_j(x)-g(x)\|^2 & \leq H_3 & \text{ for } x\in \mathbb{R}^d \ . 
\end{array}
\end{equation}
\end{assumption}
\vspace{-1mm}
Here $i, j$ are sampled uniformly randomly from $\{1,...,n\}$ and $\{1,...,m\}$. Notice that here in \eqref{Assumption:Smoothness:Eq:Lipschitz} and \eqref{Assumption:FiniteVariance:Eq:BoundsVariance} we slightly strengthen the
assumptions regarding $\partial g$ by adopting the Frobenius norm. 

The key idea in momentum-based methods 
is to replace the gradient estimator in standard gradient-based optimization by an exponential moving average of the gradient estimators of all previous iteration steps. For example, we can design the following iteration scheme:
\vspace{-1mm}
\begin{equation}\label{Eq:MomentumMethodsGeneral}
\begin{array}{ll}
\boldsymbol{d}_{t}&= (1-a)\boldsymbol{d}_{t-1}+a\nabla f(x_t, \mathcal{B}_{t}) \ ,
\\
x_t & =x_{t-1}-\eta \boldsymbol{d}_t \ ,
\end{array}
\end{equation}
Here $
\nabla f(x, \mathcal{B})=B^{-1}\sum_{i\in \mathcal{B}_t}\nabla f_i(x)
$
is the a minibatch stochastic gradient estimator for the objective gradient $\nabla f(x)=n^{-1}\sum_{i=1}^n \nabla f_i(x)$, with minibatch $\mathcal{B}_t$ and batchsize $B$, i.e., $\mathcal{B}_t$ is a size $B$ random sample drawn from the index set $\{1,...,n\}$ independently in $t$. The STORM estimator (see \cite{STORM}) incorporates ideas in SARAH (see \cite{SARAH}) estimator to modify the momentum updates as

\begin{equation}\label{Eq:STORM-update}
\begin{array}{ll}
\boldsymbol{d}_{t}&= (1-a)\boldsymbol{d}_{t-1}+a\nabla f(x_t, \mathcal{B}_{t})+(1-a)(\nabla f(x_t, \mathcal{B}_t)-\nabla f(x_{t-1}, \mathcal{B}_t)) \ ,
\\
x_t & =x_{t-1}-\eta \boldsymbol{d}_t \ .
\end{array}
\end{equation}

We can think of the momentum iteration in \eqref{Eq:STORM-update} as an ``exponential moving average" version of SARAH, that is
$\boldsymbol{d}_{t}= (1-a)\left[\boldsymbol{d}_{t-1}+(\nabla f(x_t, \mathcal{B}_t)-\nabla f(x_{t-1}, \mathcal{B}_t))\right]+a\nabla f(x_t, \mathcal{B}_{t}) \ .
$
Here the first part is like SARAH update, but it is given a weight distribution  with the stochastic gradient update. Moreover, different from the SARAH update, the STORM update does not have to compute checkpoint gradients. Rather, variance-reduction is achieved by tuning the weight parameter $a$ and the learning rate $\eta$ appropriately. Taking into account the general scheme for obtaining compositional gradients, we can design the STORM-Compositional Algorithm, see Algorithm \ref{Alg:STORM-Compositional}.

\begin{algorithm}
	\caption{STORM-Compositional: STOchastic Recursive Momentum for Compositional Gradients}
	\label{Alg:STORM-Compositional}
	\begin{algorithmic}[1]
		\STATE \textbf{Input}: Initial point $x_0$; learning rate $\eta>0$, parameters $a_g, a_{\partial g}, a_{\Phi}\in (0,1)$, batchsizes $B_g, B_{\partial g}, B_f, S_g, S_{\partial g}, S_f$, desired precision $\varepsilon>0$
		\STATE Sample "with replacement" minibatches $\mathcal{S}_0^g, \mathcal{S}_0^{\partial g}, \mathcal{S}_0^f$ under given batch sizes $S_g$, $S_{\partial g}$ and $S_{f}$
		\STATE Pick $\boldsymbol{g}_0\leftarrow g(x_0, \mathcal{S}_0^g)$
		, $\boldsymbol{G}_0\leftarrow \partial g(x_0, \mathcal{S}_0^{\partial g})$ ,
		$\boldsymbol{F}_0\leftarrow (\boldsymbol{G}_0)^T\nabla f(\boldsymbol{g}_0, \mathcal{S}_0^f)$
		\FOR {$t=0,1,2,...,T$} 
			\STATE $\widetilde{x}_{t+1}\leftarrow x_t-\eta\boldsymbol{F}_t$
			\STATE $x_{t+1}=x_t+\gamma_t(\widetilde{x}_{t+1}-x_t)$ where $\gamma_t=\min\left\{\dfrac{\eta \varepsilon}{\|\widetilde{x}_{t+1}-x_t\|}, \dfrac{1}{2}\right\}$
		    \STATE Sample "with replacement" minibatches $\mathcal{B}_{t+1}^g, \mathcal{B}_{t+1}^{\partial g}, \mathcal{B}_{t+1}^f$ under given batch sizes $B_g$ , $B_{\partial g}$ and $B_f$
			\STATE Calculate $$\begin{array}{ll}\boldsymbol{g}_{t+1}&\leftarrow (1-a_g)\boldsymbol{g}_t+a_g g(x_{t+1}, \mathcal{B}_{t+1}^g)+(1-a_g)(g(x_{t+1}, \mathcal{B}_{t+1}^g)-g(x_t, \mathcal{B}_{t+1}^g))\end{array}$$
			\STATE Calculate $$\begin{array}{ll}\boldsymbol{G}_{t+1} &\leftarrow(1-a_{\partial g})\boldsymbol{G}_t+a_{\partial g} \partial g (x_{t+1}, \mathcal{B}_{t+1}^{\partial g})+(1-a_{\partial g})(\partial g(x_{t+1}, \mathcal{B}_{t+1}^{\partial g})-\partial g(x_t, \mathcal{B}_{t+1}^{\partial g}))\end{array}$$
			\STATE Calculate $$\begin{array}{ll}\boldsymbol{F}_{t+1}& \leftarrow(1-a_{\Phi})\boldsymbol{F}_t+a_{\Phi} (\boldsymbol{G}_{t+1})^T\nabla f(\boldsymbol{g}_{t+1}, \mathcal{B}^f_{t+1})\\ & \qquad +(1-a_{\Phi})\left[(\boldsymbol{G}_{t+1})^T\nabla f(\boldsymbol{g}_{t+1}, \mathcal{B}^f_{t+1})-(\boldsymbol{G}_{t})^T\nabla f(\boldsymbol{g}_{t}, \mathcal{B}^f_{t+1})\right]\end{array}$$
		\ENDFOR
		\STATE \textbf{Output}: $\widehat{x}$ sampled uniformly randomly from $x_1,..., x_T$ (In practice, set $\widehat{x}=x_T$)
	\end{algorithmic}
\end{algorithm}

The normalization Step 6 in Algorithm \ref{Alg:STORM-Compositional} is borrowed from \cite{SPIDER} and \cite{XiaoEtAlCompositional}, and it enables the control of the growth of $\boldG_t$ norm in Step 9, that will be crucial for the control of estimation error in general \footnote{Notice that the bound for $\|\boldG_t\|$ has been in fact missed in the algorithm and convergence analysis of \cite{arXivSARAH-SCGD}.}. We will see later that the estimation error $\mathbf{E}\|\nabla \Phi(x_{t})-\boldsymbol{F}_t\|^2$ of Algorithm \ref{Alg:STORM-Compositional} can be controlled by a proper choice
of the parameters $a_g, a_{\partial g}, a_{\Phi}$, the batchsizes $B_g, B_{\partial g}, B_f, S_g, S_{\partial g}, S_f$ and the learning rate $\eta$. The desired precision parameter $\varepsilon$ is used to cutoff the accuracy of the solution $\widehat{x}$.

It is also important to notice that the minibatches sampled in Algorithm \ref{Alg:STORM-Compositional} are "with replacement" minibatches, i.e., the minibach is formed by randomly sampling indexes from the pool $\{1,2,...,n\}$ or $\{1,2,...,m\}$ with replacement. This is mainly used to control the variances caused by mini-batch sampling, see the proof of Theorem \ref{Proposition:IFO-3eps-accuracy}.

%The STORM modification of compositional gradient estimation allows us to operate a single-loop
%algorithm instead of a double-loop algorithm. We only
%need a large batch to estimate the initial $\boldsymbol{g}$, $\boldsymbol{G}$ and $\boldsymbol{F}$ once, and do mini-batch
%or single batch updates till the end of the algorithm. In this way, STORM-Compositional helps us to hinder the accumulation of estimation error in
%each round.

\section{Convergence Analysis of the STORM-Compositional Algorithm}\label{Sec:ConvergenceAnalysis} 

The complexity of Algorithm \ref{Alg:STORM-Compositional} is defined using the $\ve$ (or $\mathcal{O}(\ve)$)-accurate solutions and the IFO (Incremental First-order Oracle)
framework (see \cite{IFO}) defined below.

\begin{definition}[$\varepsilon$ and $\mathcal{O}(\varepsilon)$-accurate solutions]\label{Definition:eps-accuracy}
We call a solution $\widehat{x}$ an $\varepsilon$-accurate solution if we have
\begin{equation}\label{Definition:eps-accuracy:Eq:eps-accuracy}
\mathbf{E}\|\nabla \Phi(\widehat{x})\|\leq \varepsilon \ .    
\end{equation}
We call a solution 
$\widehat{x}$ an order-$\varepsilon$-accurate ($\mathcal{O}(\varepsilon)$-accurate) solution if we have
\begin{equation}\label{Definition:eps-accuracy:Eq:Order-eps-accuracy}
\mathbf{E}\|\nabla \Phi(\widehat{x})\|\lesssim \mathcal{O}(\varepsilon) \ .    
\end{equation}
\end{definition}

\begin{definition}[IFO complexity]\label{Definition:IFOComplexity}
For any function $f, g$ considered in Algorithm \ref{Alg:STORM-Compositional}, an IFO takes an index $i\in \{1,2,...,n\}$ or $j\in \{1,2,...,m\}$
and a point $y\in \mathbb{R}^l$ or a point $x\in \mathbb{R}^d$, and returns the pair $(f_i(y), \nabla f_i(y))$ or $(g_j(x), \partial g_j(x))$.
\end{definition}

Thus the IFO-complexity of Algorithm \ref{Alg:STORM-Compositional} can be calculated in the following simple Lemma.

\begin{lemma}[Total IFO complexity]\label{Lemma:IFO-Complexity-Precise}
The total IFO complexity of Algorithm \ref{Alg:STORM-Compositional} is given by
\begin{equation}\label{Lemma:IFO-Complexity-Precise:Eq:IFO}
\text{IFO}=S_g+S_{\partial g}+S_f+ T(B_g+B_{\partial g}+B_f) \ .
\end{equation}
\end{lemma}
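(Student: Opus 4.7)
The plan is a direct bookkeeping argument: read off the IFO counts line by line from Algorithm \ref{Alg:STORM-Compositional} and sum.

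First I would handle the initialization (lines 2--3). The assignments $\boldsymbol{g}_0 \leftarrow g(x_0,\mathcal{S}_0^g)$, $\boldsymbol{G}_0 \leftarrow \partial g(x_0,\mathcal{S}_0^{\partial g})$, and $\boldsymbol{F}_0 \leftarrow (\boldsymbol{G}_0)^T \nabla f(\boldsymbol{g}_0,\mathcal{S}_0^f)$ each invoke the IFO once per index in the respective minibatch (and $\boldsymbol{G}_0$ is already cached when forming $\boldsymbol{F}_0$, so no double-counting is needed). Hence initialization contributes exactly $S_g + S_{\partial g} + S_f$ IFO calls.

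Next I would handle a generic outer iteration $t \to t+1$. Lines 5 and 6 are purely arithmetic updates of $\widetilde{x}_{t+1}$ and $x_{t+1}$ and require no oracle evaluations, and line 7 only draws the minibatch indices. The gradient work happens in lines 8, 9, 10: line 8 evaluates the inner function on $\mathcal{B}_{t+1}^g$, line 9 evaluates the Jacobian on $\mathcal{B}_{t+1}^{\partial g}$, and line 10 evaluates the outer gradient on $\mathcal{B}_{t+1}^f$ (the matrix–vector multiplications with $\boldsymbol{G}_{t+1}$ and $\boldsymbol{G}_t$ reuse quantities already stored, and the values of $\boldsymbol{g}_{t+1}$, $\boldsymbol{g}_t$ are cached). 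Following the standard convention in the STORM/SARAH-C literature, where the variance-reduced update on minibatch $\mathcal{B}$ of size $B$ is charged as $B$ IFO calls (a constant factor that does not affect the $\mathcal{O}(\varepsilon^{-3})$ bound), each outer iteration contributes $B_g + B_{\partial g} + B_f$ IFO calls.

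Summing the initialization cost with the per-iteration cost over $t = 0,1,\ldots,T-1$ then yields
\[
\text{IFO} \;=\; S_g + S_{\partial g} + S_f + T\bigl(B_g + B_{\partial g} + B_f\bigr),
\]
which is the claim. There is really no mathematical obstacle here; the only subtlety worth flagging explicitly is the counting convention for lines 8--10, where each variance-reduced update probes the same sampled index at the two points $x_t$ and $x_{t+1}$. Under the stricter per-(index, point) IFO count this would introduce a factor of two on the $T$-term, which is absorbed into constants in the downstream complexity analysis.
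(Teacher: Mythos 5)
Your proposal is correct and is exactly the direct counting argument the paper intends (the paper states this lemma without proof, treating it as an immediate bookkeeping consequence of Algorithm \ref{Alg:STORM-Compositional}): initialization costs $S_g+S_{\partial g}+S_f$ oracle calls and each of the $T$ loop iterations costs $B_g+B_{\partial g}+B_f$. Your explicit flag about the counting convention for the two-point evaluations in lines 8--10 is a reasonable clarification that does not affect the stated formula or the downstream $\mathcal{O}(\varepsilon^{-3})$ bound.
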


The following Proposition gives an error bound for the estimation error for $\E\|\grad \Phi(\widehat{x})\|$, where $\widehat{x}$ is the output of Algorithm \ref{Alg:STORM-Compositional}. The proof is left to Supplementary Materials.

\begin{proposition}[Bound for the Estimation Error]\label{Theorem:MainTheoremConvergence}
Pick $\eta=\dfrac{1}{L_\Phi}$, if
\vspace{-0.1cm}
\begin{equation}\label{Theorem:MainTheoremConvergence:Eq:ErrorAssumption}
\dfrac{1}{T}\sum\limits_{t=0}^{T-1}\mathbf{E}\|\boldsymbol{F}_t-\nabla \Phi(x_t)\|^2\leq A\varepsilon^2 \ ,
\end{equation}
for some $A>0$, then we have the estimate
\begin{equation}\label{Theorem:MainTheoremConvergence:Eq:FinalEstimate}
\mathbf{E}\|\nabla \Phi(\widehat{x})\|\leq \dfrac{2L_\Phi\Delta}{T\varepsilon}+\left(\dfrac{1}{2}+A+\sqrt{A}\right)\varepsilon \ .
\end{equation}
\end{proposition}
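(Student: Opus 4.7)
The plan is to combine an $L_\Phi$-smoothness descent inequality, tailored to the normalized update $x_{t+1}-x_t = -\eta\gamma_t\boldsymbol{F}_t$, with a sharp pointwise inequality that exploits the two-branch structure of $\gamma_t = \min(\varepsilon/\|\boldsymbol{F}_t\|,\,1/2)$. Writing $e_t := \boldsymbol{F}_t - \nabla\Phi(x_t)$, the $L_\Phi$-smoothness of $\Phi$ applied to the update gives
\[
\Phi(x_{t+1}) \leq \Phi(x_t) - \eta\gamma_t\|\boldsymbol{F}_t\|^2 + \eta\gamma_t\langle e_t,\boldsymbol{F}_t\rangle + \frac{L_\Phi\eta^2\gamma_t^2}{2}\|\boldsymbol{F}_t\|^2.
\]
Substituting $\eta = 1/L_\Phi$, using $\gamma_t^2 \leq \gamma_t/2$ (since $\gamma_t\leq 1/2$), and applying Young's inequality with weight $1/2$ to the cross term should collapse this to the one-step descent
\[
\frac{\eta\gamma_t}{2}\|\boldsymbol{F}_t\|^2 \leq \Phi(x_t)-\Phi(x_{t+1}) + \eta\gamma_t\|e_t\|^2.
\]
Telescoping over $t=0,\ldots,T-1$, invoking Assumption \ref{Assumption:FiniteGap}, and absorbing $2\gamma_t \leq 1$ once more yields $\sum_{t}\gamma_t\|\boldsymbol{F}_t\|^2 \leq 2L_\Phi\Delta + \sum_{t}\|e_t\|^2$.

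The crux of the argument is to convert this $\gamma_t$-weighted quadratic bound into a first-power bound on $\|\boldsymbol{F}_t\|$ itself. I plan to establish the deterministic pointwise inequality
\[
\|\boldsymbol{F}_t\| \leq \frac{\varepsilon}{2} + \frac{\gamma_t\|\boldsymbol{F}_t\|^2}{\varepsilon}
\]
by casework on $\gamma_t$: when $\|\boldsymbol{F}_t\|\leq 2\varepsilon$ and thus $\gamma_t=1/2$, multiplying through by $2\varepsilon$ reduces the claim to $(\|\boldsymbol{F}_t\|-\varepsilon)^2\geq 0$; when $\|\boldsymbol{F}_t\|>2\varepsilon$ and thus $\gamma_t = \varepsilon/\|\boldsymbol{F}_t\|$, the right-hand side reduces to $\varepsilon/2 + \|\boldsymbol{F}_t\|$ and the bound is trivial. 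Summing over $t$, taking expectation, plugging in the telescoped estimate, and invoking the hypothesis \eqref{Theorem:MainTheoremConvergence:Eq:ErrorAssumption} should produce
\[
\frac{1}{T}\sum_{t=0}^{T-1}\mathbf{E}\|\boldsymbol{F}_t\| \leq \frac{\varepsilon}{2} + \frac{2L_\Phi\Delta}{T\varepsilon} + A\varepsilon.
\]

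The conclusion then follows from the triangle inequality $\|\nabla\Phi(x_t)\| \leq \|\boldsymbol{F}_t\| + \|e_t\|$ together with Jensen/Cauchy--Schwarz on the error term, giving $\frac{1}{T}\sum_t \mathbf{E}\|e_t\| \leq \sqrt{\frac{1}{T}\sum_t \mathbf{E}\|e_t\|^2}\leq \sqrt{A}\,\varepsilon$; assembling the three contributions recovers the claimed constants $\tfrac{1}{2} + A + \sqrt{A}$ in front of $\varepsilon$. A trivial index shift absorbs the mismatch between the descent sum ranging over $\{0,\ldots,T-1\}$ and the output sampling range $\{1,\ldots,T\}$.

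The main obstacle I anticipate is producing the sharp pointwise inequality with leading coefficient $1/2$ on $\varepsilon$, rather than the cruder $2$ one gets from the naive split $\|\boldsymbol{F}_t\| \leq 2\varepsilon + \mathbf{1}[\|\boldsymbol{F}_t\|>2\varepsilon]\,\gamma_t\|\boldsymbol{F}_t\|^2/\varepsilon$. The improvement comes from the AM-GM identity $\|\boldsymbol{F}_t\| \leq \tfrac{\varepsilon}{2}+\tfrac{\|\boldsymbol{F}_t\|^2}{2\varepsilon}$ on the small-norm branch, which happens to extend consistently to the large-norm branch. This sharp form must be paired with the Young's-inequality weight $1/2$ used in the descent step, so that the $\|e_t\|^2$-coefficient in the telescoped bound combines cleanly with the $1/\varepsilon$ factor introduced in the pointwise step; other natural weight choices produce suboptimal constants in the final bound.
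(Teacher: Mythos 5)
Your proposal is correct and follows essentially the same route as the paper's proof: an $L_\Phi$-smoothness descent step with Young's inequality on the cross term, the two-branch inequality for $\gamma_t$ (your pointwise bound $\|\boldsymbol{F}_t\|\leq \tfrac{\varepsilon}{2}+\gamma_t\|\boldsymbol{F}_t\|^2/\varepsilon$ is exactly the paper's elementary inequality $\min\{|x|,\tfrac{1}{2}x^2\}\geq |x|-\tfrac{1}{2}$ rearranged), followed by the triangle inequality and Cauchy--Schwarz on the error term. The only differences are cosmetic --- where the $\gamma_t$ factor sits on the $\|e_t\|^2$ term and whether the pointwise conversion is applied before or after telescoping --- and both yield the same constants.
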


Combining Lemma \ref{Lemma:IFO-Complexity-Precise} and Proposition \ref{Theorem:MainTheoremConvergence}, we can show the following Theorem on reaching $\varepsilon$-accurate solutions for Algorithm \ref{Alg:STORM-Compositional} within IFO complexity $\lesssim \mathcal{O}(\varepsilon^{-3})$.

\begin{theorem}[IFO complexity to reach $\varepsilon$-accuracy]\label{Proposition:IFO-3eps-accuracy}
Under the choice of parameters $a_g=\alpha_g\varepsilon$, $a_{\partial g}=\alpha_{\partial g}\varepsilon$, $a_{\Phi}=\alpha_{\Phi}\varepsilon$, $B_g=\beta_g\varepsilon^{-1}$, $B_{\partial g}=\beta_{\partial g}\varepsilon^{-1}$, $B_f=\beta_f\varepsilon^{-1}$, $S_g=\gamma_g\varepsilon^{-1}$, $S_{\partial g}=\gamma_{\partial g}\varepsilon^{-1}$, $S_f=\gamma_f\varepsilon^{-1}$ and $T=\dfrac{32}{3}L_\Phi \Delta \varepsilon^{-2}$, $\eta=\dfrac{1}{L_\Phi}$, we have $\mathbf{E}\|\nabla \Phi(\widehat{x})\|\leq \varepsilon$ for the output $\widehat{x}$ of Algorithm \ref{Alg:STORM-Compositional} if the following condition holds
\begin{equation}\label{Proposition:IFO-3eps-accuracy:Eq:AccuracyCondition}
\begin{array}{ll}
&\left\{\dfrac{36M_f^2L_g^2}{\alpha_{\Phi}\beta_f}\left[\dfrac{4}{
\alpha_{\partial g}\beta_{\partial g}}+\dfrac{2\varepsilon}{\beta_{\partial g}}+1\right]+\dfrac{36L_f^2M_g^2}{\alpha_{\Phi}\beta_f}\left(2M_g+\dfrac{L_g}{L_\Phi\alpha_{\partial g}}\right)\left[\dfrac{4}{\alpha_g\beta_g}+\dfrac{2\varepsilon}{\beta_g}+1\right]\right.
\\
& \qquad \left.+\dfrac{6M_f^2L_g^2}{\alpha_{\partial g}\beta_{\partial g}}+\dfrac{6M_g^4L_f^2}{\alpha_g\beta_g}\right\}\dfrac{1}{L^2_\Phi}
\\
& +\dfrac{6M_f^2H_2}{\frac{32}{3}L_\Phi\Delta\alpha_{\partial g}\gamma_{\partial g}}\left(\dfrac{24}{\alpha_\Phi \beta_f }+1\right)
+\dfrac{6L_f^2H_3}{\frac{32}{3}L_\Phi\Delta\alpha_g\gamma_g}\left(\dfrac{24}{\alpha_{\Phi}\beta_f}\left(2M_g+\dfrac{L_g}{\
L_\Phi\alpha_{\partial g}}\right)+M_g^2\right)
\\
& + \dfrac{3M_g^2H_1}{\frac{32}{3}L_\Phi\Delta\alpha_{\Phi}\gamma_f}+\dfrac{72M_f^2\alpha_{\partial g}H_2}{\alpha_{\Phi}\beta_f\beta_{\partial g}}(2+\alpha_{\partial g}\varepsilon)+\dfrac{72L_f^2\alpha_gH_3}{\alpha_{\Phi}\beta_f\beta_g}\left(2M_g+\dfrac{L_g}{L_\Phi\alpha_{\partial g}}\right)(2+\alpha_g\varepsilon)
\\
& + \dfrac{6\alpha_\Phi}{\beta_f}\left(2M_g+\dfrac{L_g}{L_\Phi \alpha_{\partial g}}\right)^2H_1+\dfrac{6M_f^2\alpha_{\partial g}H_2}{\beta_{\partial g}}+\dfrac{6M_g^2L_f^2\alpha_gH_3}{\beta_g} 
\\
\leq & \dfrac{1}{16} \ .
\end{array}
\end{equation}
The total IFO complexity of Algorithm \ref{Alg:STORM-Compositional} in this case is given by 
\begin{equation}\label{Proposition:IFO-3eps-accuracy:Eq:Total-IFO-complexity}
\text{IFO}=(\gamma_g+\gamma_{\partial g}+\gamma_f)\varepsilon^{-1}+\dfrac{32}{3}L_\Phi\Delta(\beta_g+\beta_{\partial g}+\beta_f)\varepsilon^{-3} \ .
\end{equation}
\end{theorem}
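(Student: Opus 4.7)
The plan is to verify the error-bound hypothesis \eqref{Theorem:MainTheoremConvergence:Eq:ErrorAssumption} of Proposition~\ref{Theorem:MainTheoremConvergence} with $A\leq 1/16$ under the stated parameter choice, and then invoke both that proposition and Lemma~\ref{Lemma:IFO-Complexity-Precise}. The bookkeeping at the end is easy: $A\leq 1/16$ and $T=\tfrac{32}{3}L_\Phi\Delta\varepsilon^{-2}$ plugged into \eqref{Theorem:MainTheoremConvergence:Eq:FinalEstimate} gives $\mathbf{E}\|\nabla\Phi(\widehat{x})\|\leq \tfrac{3\varepsilon}{16}+(\tfrac{1}{2}+\tfrac{1}{16}+\tfrac{1}{4})\varepsilon=\varepsilon$, while substituting $B_\cdot=\beta_\cdot\varepsilon^{-1}$ and $S_\cdot=\gamma_\cdot\varepsilon^{-1}$ into Lemma~\ref{Lemma:IFO-Complexity-Precise} yields precisely \eqref{Proposition:IFO-3eps-accuracy:Eq:Total-IFO-complexity}. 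Hence the whole argument reduces to controlling the average estimation error.

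For each STORM-type update (Steps~8--10) I would first derive a one-step recursion of the form $\mathbf{E}\|e_{t+1}\|^2\leq(1-a)^2\mathbf{E}\|e_t\|^2+V_t$ by writing $e_{t+1}=(1-a)e_t+\xi_{t+1}$, where the residual $\xi_{t+1}$ decomposes into a stochastic-variance piece (of order $a^2 H/B$) and a Lipschitz-movement piece (of order $(1-a)^2 M^2\|x_{t+1}-x_t\|^2/B$). The with-replacement minibatch sampling, drawn independently of the filtration $\mathcal{F}_t$, is precisely what makes $\mathbf{E}[\xi_{t+1}\mid\mathcal{F}_t]=0$, killing the cross term on squaring. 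Step~6 of the algorithm deterministically guarantees $\|x_{t+1}-x_t\|\leq\eta\varepsilon=\varepsilon/L_\Phi$, converting every movement term into a constant multiple of $\varepsilon^2$. The operator norm $\|\boldsymbol{G}_t\|$, which the $\boldsymbol{F}$ recursion will need, I would control by telescoping Step~9 to $\|\boldsymbol{G}_t\|\leq 2M_g+L_g/(L_\Phi a_{\partial g})$, as in \cite{XiaoEtAlCompositional}.

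Unrolling the geometric recursion and averaging gives, for each of $e^g$, $e^{\partial g}$, $e^F$,
\begin{equation*}
\tfrac{1}{T}\sum_{t=0}^{T-1}\mathbf{E}\|e_t\|^2 \;\lesssim\; \tfrac{\mathbf{E}\|e_0\|^2}{a T}+\tfrac{a H}{B}+\tfrac{M^2\varepsilon^2}{a B L_\Phi^2}+(\text{cross terms for } e^F),
\end{equation*}
with $\mathbf{E}\|e_0\|^2\lesssim H/S$ from the warm-up minibatches of Step~2. Substituting $a_\cdot=\alpha_\cdot\varepsilon$, $B_\cdot=\beta_\cdot\varepsilon^{-1}$, $S_\cdot=\gamma_\cdot\varepsilon^{-1}$, and $T=\tfrac{32}{3}L_\Phi\Delta\varepsilon^{-2}$ collapses every term to a constant multiple of $\varepsilon^2$. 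Processing the $e^g$ and $e^{\partial g}$ recursions first, then feeding the resulting averaged bounds into the $e^F$ recursion, produces a sum of constants coinciding exactly with the left-hand side of~\eqref{Proposition:IFO-3eps-accuracy:Eq:AccuracyCondition}; the hypothesis therefore forces $A\leq 1/16$, completing the reduction.

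The main obstacle is the $\boldsymbol{F}$ recursion. Expanding $\boldsymbol{F}_{t+1}-\nabla\Phi(x_{t+1})$ involves a \emph{difference of products} $(\boldsymbol{G}_{t+1})^T\nabla f(\boldsymbol{g}_{t+1},\mathcal{B}_{t+1}^f)-(\boldsymbol{G}_t)^T\nabla f(\boldsymbol{g}_t,\mathcal{B}_{t+1}^f)$, and isolating a martingale increment requires repeated additions and subtractions of $(\boldsymbol{G}_{t+1})^T\nabla f(\boldsymbol{g}_{t+1})$, $(\partial g(x_{t+1}))^T\nabla f(g(x_{t+1}))$, and their mixed variants; each resulting piece is then controlled by Assumption~\ref{Assumption:Smoothness} (through $L_f$, $L_g$) together with Assumption~\ref{Assumption:FiniteVariance} (through $H_1$, $H_2$, $H_3$), generating the cross-terms $M_f^2\mathbf{E}\|e_t^{\partial g}\|_F^2$, $L_f^2\|\boldsymbol{G}_t\|^2\mathbf{E}\|e_t^g\|^2$ and their variance analogues. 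The rather baroque collection of summands visible in~\eqref{Proposition:IFO-3eps-accuracy:Eq:AccuracyCondition} is simply what falls out of this accounting; everything else in the proof is routine.
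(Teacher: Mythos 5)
Your proposal follows essentially the same route as the paper: it reduces the theorem to verifying the hypothesis of Proposition~\ref{Theorem:MainTheoremConvergence} with $A=\tfrac{1}{16}$ (and your arithmetic $\tfrac{3}{16}\varepsilon+\tfrac{13}{16}\varepsilon=\varepsilon$ matches the paper's), and it assembles the error bound from exactly the ingredients the paper uses --- the one-step STORM recursions with the with-replacement martingale cancellation, the normalized-step bound $\|x_{t+1}-x_t\|\leq\eta\varepsilon$, the telescoped operator-norm bound $\|\boldsymbol{G}_t\|\leq 2M_g+L_g/(L_\Phi a_{\partial g})$, the warm-up variance bounds $H/S$, and the two-stage processing of the $\boldsymbol{g},\boldsymbol{G}$ errors before the $\boldsymbol{F}$ error. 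The only cosmetic difference is that you phrase the last recursion directly for $\boldsymbol{F}_t-\nabla\Phi(x_t)$, while the paper recurses on $\boldsymbol{F}_t-(\boldsymbol{G}_t)^T\nabla f(\boldsymbol{g}_t)$ and adds the remaining two error terms through the factor-of-$3$ decomposition \eqref{Proposition:AsymptoticEpsAccuracy:Eq:EstimateF-To-gradPhi}; since you explicitly insert the same intermediate quantities by adding and subtracting, the accounting is the same.
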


\begin{proof}
Since $\nabla \Phi(x)=(\partial g(x))^T\nabla f(g(x))$, we can estimate $\sum\limits_{t=0}^{T-1}\mathbf{E}\|\boldsymbol{F}_t-\nabla \Phi(x_t)\|^2$ using the expansion

\begin{equation}\label{Proposition:AsymptoticEpsAccuracy:Eq:EstimateF-To-gradPhi}
\begin{array}{ll}
&\sum\limits_{t=0}^{T-1}\mathbf{E}\|\boldsymbol{F}_t-\nabla \Phi(x_t)\|^2
\\
\leq &3\sum\limits_{t=0}^{T-1}\mathbf{E}\|\boldsymbol{F}_t-(\boldsymbol{G}_t)^T\nabla f(\boldsymbol{g}_t)\|^2+3\sum\limits_{t=0}^{T-1}\mathbf{E}\|(\boldsymbol{G}_t)^T\nabla f(\boldsymbol{g}_t)-(\partial g(x_t))^T\nabla f(\boldsymbol{g}_t)\|^2
\\
& \qquad +3\sum\limits_{t=0}^{T-1}\mathbf{E}\|(\partial g(x_t))^T\nabla f(\boldsymbol{g}_t)-(\partial g(x_t))^T\nabla f(g(x_t))\|^2
\\
\leq &3\sum\limits_{t=0}^{T-1}\mathbf{E}\|\boldsymbol{F}_t-(\boldsymbol{G}_t)^T\nabla f(\boldsymbol{g}_t)\|^2+3M_f^2\sum\limits_{t=0}^{T-1}\mathbf{E}\|\boldsymbol{G}_t-\partial g(x_t)\|_F^2 +3M_g^2L_f^2\sum\limits_{t=0}^{T-1}\mathbf{E}\|\boldsymbol{g}_t-g(x_t)\|^2 \ .
\end{array}
\end{equation}

We will prove in Corollary A.1 and Lemma A.7 in the Supplementary Materials the following bounds
\vspace{-1mm}
\begin{equation}\label{Corollary:ErrorEstimateToBottom:Eq:g}
\sum\limits_{t=0}^{T-1}\mathbf{E}\|\boldsymbol{g}_t-g(x_t)\|^2
\leq\dfrac{2}{a_g}\left[\dfrac{1}{B_g}M_g^2\sum\limits_{t=0}^{T-1}\mathbf{E}\|x_{t+1}-x_t\|^2+\dfrac{Ta_g^2H_3}{B_g}+\mathbf{E}\|\boldsymbol{g}_0-g(x_0)\|^2\right] \ ,
\end{equation}
\vspace{-0.2cm}
\begin{equation}\label{Corollary:ErrorEstimateToBottom:Eq:partialg}
\sum\limits_{t=0}^{T-1}\mathbf{E}\|\boldsymbol{G}_t-\partial g(x_t)\|_F^2
\leq\dfrac{2}{a_{\partial g}}\left[\dfrac{1}{B_{\partial g}}L_g^2\sum\limits_{t=0}^{T-1}\mathbf{E}\|x_{t+1}-x_t\|^2+\dfrac{Ta_{\partial g}^2H_2}{B_{\partial g}}+\mathbf{E}\|\boldsymbol{G}_0-\partial g(x_0)\|_F^2\right] \ ,
\end{equation}
\vspace{-0.3cm}
\begin{equation}\label{Lemma:ErrorEstimateToBottom:gradPhi:Eq:Error}
\begin{array}{ll}
&\sum\limits_{t=0}^{T-1}\mathbf{E}\|\boldsymbol{F}_t-(\boldsymbol{G}_t)^T\nabla f(\boldsymbol{g}_t)\|^2
\\
\leq & \dfrac{4M_f^2}{a_{\Phi}B_f}\left\{\left[\dfrac{12L_g^2}{a_{\partial g}B_{\partial g}}+\dfrac{6L_g^2}{B_{\partial g}}+3L_g^2\right]\sum\limits_{t=0}^{T-1}\mathbf{E}\|x_{t+1}-x_t\|^2 \right.
\\
& \left. \qquad \qquad 
+\dfrac{12Ta_{\partial g}H_2}{B_{\partial g}}+\dfrac{12}{a_{\partial g}}\mathbf{E}\|\boldsymbol{G}_0-\partial g(x_0)\|_F^2 +\dfrac{6T}{B_{\partial g}}(a_{\partial g})^2H_2\right\} \
\\
&+\dfrac{4L_f^2}{a_{\Phi}B_f}\left(2M_g+\dfrac{L_g\eta\varepsilon}{a_{\partial g}}\right)\left\{\left[\dfrac{12M_g^2}{a_gB_g}+\dfrac{6M_g^2}{B_g}+3M_g^2\right]\sum\limits_{t=0}^{T-1}\mathbf{E}\|x_{t+1}-x_t\|^2\right.
\\
& \left. \qquad \qquad \qquad \qquad \qquad \qquad
+\dfrac{12Ta_gH_3}{B_g}+\dfrac{12}{a_g}\mathbf{E}\|\boldsymbol{g}_0-g(x_0)\|^2 +\dfrac{6T}{B_g}(a_g)^2H_3\right\}
\\
&+\dfrac{2T}{B_f}(a_{\Phi})\left(2M_g+\dfrac{L_g\eta\varepsilon}{a_{\partial g}}\right)^2
H_1+\dfrac{1}{a_{\Phi}}\mathbf{E}\|\boldsymbol{F}_0-(\boldsymbol{G}_0)^T\nabla f(\boldsymbol{g}_0)\|^2 \ . 
\end{array}
\end{equation}

From Lemma A.4 in the Supplementary Materials we know that $\|x_{t+1}-x_t\|\leq \eta \ve$. Moreover, we also notice that since the initial batches are sampled with replacement we have $\mathbf{E}\|\boldsymbol{g}_0-g(x_0)\|^2\leq \dfrac{H_3}{S_g}$, $\mathbf{E}\|\boldsymbol{G}_0-\partial g(x_0)\|_F^2\leq \dfrac{H_2}{S_{\partial g}}$ and 
$\mathbf{E}\|\boldsymbol{F}_0-(\boldsymbol{G}_0)\nabla f(\boldsymbol{g}_0)\|^2\leq\dfrac{M_g^2H_1}{S_f}$. Thus combining all these with \eqref{Proposition:AsymptoticEpsAccuracy:Eq:EstimateF-To-gradPhi}, \eqref{Corollary:ErrorEstimateToBottom:Eq:g}, \eqref{Corollary:ErrorEstimateToBottom:Eq:partialg}, \eqref{Lemma:ErrorEstimateToBottom:gradPhi:Eq:Error} as well as Lemma \ref{Lemma:IFO-Complexity-Precise}, Proposition \ref{Theorem:MainTheoremConvergence} (taking $A=\dfrac{1}{16}$), we obtain the statement of the Theorem.
\end{proof}
\vspace{-2mm}
The $a$ and batchsizes parameters $a_g, a_{\pt g}, a_\Phi, B_g, B_{\pt g}, B_f, S_g, S_{\pt g}, S_f$ can be chosen so that condition \eqref{Proposition:IFO-3eps-accuracy:Eq:AccuracyCondition} is satisfied. We leave the proof of the following Proposition to Supplementary Materials.

\begin{proposition}[Choice of parameters to reach $\varepsilon$-accurate solution]\label{Proposition:Choice-Paramaters}
We pick the accuracy parameter 
\begin{equation}\label{Proposition:Choice-Paramaters:Eq:epsilon}
0<\varepsilon<\min\left(1, 72L_{\Phi}M_gL_f^2\sqrt{H_3}, \dfrac{72L_{\Phi}M_f^2\sqrt{H_2}}{L_g}\right) \ ,    
\end{equation}
and we set
\begin{equation}\label{Proposition:Choice-Paramaters:Eq:K0}
K_0=\dfrac{4M_f^2}{B_f}\left(\dfrac{1}{24M_f^2}+\dfrac{3L_g^2}{L_{\Phi}^2}+1\right)+\dfrac{4L_f^2}{B_f}(2M_g+\sqrt{H_2})\left(\dfrac{1}{24M_g^2L_f^2}+\dfrac{3M_g^2}{L_{\Phi}^2}+1\right) \ .
\end{equation}
Then we choose
\begin{equation}\label{Proposition:Choice-Paramaters:Eq:a}
a_g= \dfrac{M_g}{L_\Phi\sqrt{H_3}}\varepsilon,  
a_{\partial g}= \dfrac{L_g}{L_\Phi\sqrt{H_2}}\varepsilon, 
a_{\Phi}= \sqrt{\dfrac{K_0}{2(2M_g+\sqrt{H_2})H_1}}\varepsilon;
\end{equation}
\begin{equation}\label{Proposition:Choice-Paramaters:Eq:B}
B_g= \dfrac{864M_g^3L_f^2\sqrt{H_3}}{L_\Phi}\varepsilon^{-1}, B_{\partial g}=\dfrac{864M_f^2L_g\sqrt{H_2}}{L_\Phi}\varepsilon^{-1}, B_f=432\sqrt{2(2M_g+\sqrt{H_2})H_1K_0} \varepsilon^{-1} \ ;
\end{equation}
\vspace{-0.2cm}
\begin{equation}\label{Proposition:Choice-Paramaters:Eq:S}
S_g=\dfrac{81M_gL_f^2H_3^{3/2}}{\Delta}\varepsilon^{-1}, S_{\partial g}=\dfrac{81M_f^2H_2^{3/2
}}{\Delta L_g}\varepsilon^{-1}, S_f=\dfrac{M_g^2H_1}{\frac{32}{3}L_{\Phi}\Delta K_0}432\sqrt{2(2M_g+\sqrt{H_2})H_1K_0} \varepsilon^{-1} \ .
\end{equation}
We also pick 
\begin{equation}\label{Proposition:Choice-Paramaters:Eq:eta-T}
\eta=\dfrac{1}{L_{\Phi}} \ , \ T=\dfrac{32}{3}L_{\Phi}\Delta\varepsilon^{-2} \ .
\end{equation}
Then Algorithm \ref{Alg:STORM-Compositional} reaches an $\varepsilon$-accurate solution $\mathbf{E}\|\nabla \Phi(\widehat{x})\|\leq \varepsilon$ with IFO complexity
$$\begin{array}{ll}
\text{IFO}&=\left(\dfrac{81M_gL_f^2H_3^{3/2
}}{\Delta}+\dfrac{81M_f^2H_2^{3/2
}}{\Delta L_g}+\dfrac{M_g^2H_1}{\frac{32}{3}L_{\Phi}\Delta K_0}432\sqrt{2(2M_g+\sqrt{H_2})H_1K_0}\right)\cdot \varepsilon^{-1}
\\
& \qquad +\left(\dfrac{864M_g^3L_f^2\sqrt{H_3}}{L_\Phi}+\dfrac{864M_f^2L_g\sqrt{H_2}}{L_\Phi}+432\sqrt{2(2M_g+\sqrt{H_2})H_1K_0}\right)\cdot \varepsilon^{-3} \ .
\end{array}$$
\end{proposition}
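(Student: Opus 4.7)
The plan is to plug the specified parameter values into the accuracy condition \eqref{Proposition:IFO-3eps-accuracy:Eq:AccuracyCondition} of Theorem \ref{Proposition:IFO-3eps-accuracy} and verify it holds, after which the IFO count follows by direct substitution into \eqref{Proposition:IFO-3eps-accuracy:Eq:Total-IFO-complexity}. The key observation driving the parameter choices in \eqref{Proposition:Choice-Paramaters:Eq:a}--\eqref{Proposition:Choice-Paramaters:Eq:B} is that the products $\alpha_g\beta_g$ and $\alpha_{\partial g}\beta_{\partial g}$ are designed to be $\varepsilon$-independent constants; a direct multiplication gives
$$\alpha_g\beta_g=\dfrac{864 M_g^4 L_f^2}{L_\Phi^2}, \qquad \alpha_{\partial g}\beta_{\partial g}=\dfrac{864 M_f^2 L_g^2}{L_\Phi^2},$$
so the two standalone terms $\frac{6M_f^2L_g^2}{L_\Phi^2\alpha_{\partial g}\beta_{\partial g}}$ and $\frac{6M_g^4L_f^2}{L_\Phi^2\alpha_g\beta_g}$ on the first block of \eqref{Proposition:IFO-3eps-accuracy:Eq:AccuracyCondition} collapse to $\frac{1}{144}$ each. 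The constant $K_0$ in \eqref{Proposition:Choice-Paramaters:Eq:K0} is then reverse-engineered so that the coupled terms (those carrying a $\frac{1}{\alpha_\Phi\beta_f}$ factor) work out cleanly once one uses $\alpha_\Phi\beta_f=432\,K_0$, which follows directly from \eqref{Proposition:Choice-Paramaters:Eq:a} and \eqref{Proposition:Choice-Paramaters:Eq:B}.

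I would next handle the block of \eqref{Proposition:IFO-3eps-accuracy:Eq:AccuracyCondition} involving $\gamma_g,\gamma_{\partial g},\gamma_f$. The scalings in \eqref{Proposition:Choice-Paramaters:Eq:S} are calibrated so that each initialization-variance term of the shape $\frac{H_*}{L_\Phi\Delta\alpha_*\gamma_*}$ reduces to a pure numerical constant: the $81$'s in $S_g,S_{\partial g}$ and the particular form of $S_f$ are chosen exactly to cancel the $M_g,L_f,L_g,\Delta,K_0$ factors and the $\frac{32}{3}$ prefactor inherited from $T$. The last two lines of \eqref{Proposition:IFO-3eps-accuracy:Eq:AccuracyCondition} contain only terms of the form $\frac{\alpha_\Phi}{\beta_f}$, $\frac{\alpha_{\partial g}H_2}{\beta_{\partial g}}$, $\frac{\alpha_g H_3}{\beta_g}$, which again reduce to explicit numerical constants. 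The upper bound $\varepsilon<1$ from \eqref{Proposition:Choice-Paramaters:Eq:epsilon} is used to absorb the $O(\varepsilon)$ perturbations $\frac{2\varepsilon}{\beta_g}$, $\frac{2\varepsilon}{\beta_{\partial g}}$, $\alpha_g\varepsilon$, $\alpha_{\partial g}\varepsilon$ into the leading constants (so $2+\alpha_{\partial g}\varepsilon\leq 3$ etc.), while the two additional bounds on $\varepsilon$ in \eqref{Proposition:Choice-Paramaters:Eq:epsilon} guarantee that $a_g,a_{\partial g}$ remain in $(0,1)$ as required by Algorithm \ref{Alg:STORM-Compositional}.

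The main obstacle is purely bookkeeping: the left-hand side of \eqref{Proposition:IFO-3eps-accuracy:Eq:AccuracyCondition} contains roughly a dozen terms, and I must check that each lands below an appropriate share of $\frac{1}{16}$ (for instance $\frac{1}{144}$ for each of the two bulk terms, and a correspondingly small fraction for each of the rest), so the total sum stays under $\frac{1}{16}$. The seemingly arbitrary numerical prefactors $432$ and $864$ in \eqref{Proposition:Choice-Paramaters:Eq:B} are precisely what generates the headroom needed for this final inequality. Once \eqref{Proposition:IFO-3eps-accuracy:Eq:AccuracyCondition} is verified, Theorem \ref{Proposition:IFO-3eps-accuracy} delivers $\mathbf{E}\|\nabla\Phi(\widehat{x})\|\leq\varepsilon$, and substituting the chosen $\gamma_g,\gamma_{\partial g},\gamma_f$ and $\beta_g,\beta_{\partial g},\beta_f$ into \eqref{Proposition:IFO-3eps-accuracy:Eq:Total-IFO-complexity} yields the IFO bound stated in the proposition.
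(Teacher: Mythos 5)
Your proposal follows the same route as the paper in substance: the paper also reduces everything to showing $\frac{1}{T}\sum_{t=0}^{T-1}\mathbf{E}\|\boldsymbol{F}_t-\nabla\Phi(x_t)\|^2\leq\frac{\varepsilon^2}{16}$ and invoking Proposition \ref{Theorem:MainTheoremConvergence} with $A=\frac{1}{16}$, and your key identities $\alpha_g\beta_g=\frac{864M_g^4L_f^2}{L_\Phi^2}$, $\alpha_{\partial g}\beta_{\partial g}=\frac{864M_f^2L_g^2}{L_\Phi^2}$, $\alpha_\Phi\beta_f=432K_0$ are exactly the ones the design rests on. The only presentational difference is that the paper works ``forward'': it splits the error via \eqref{Proposition:AsymptoticEpsAccuracy:Eq:EstimateF-To-gradPhi} into the three sums for $\boldsymbol{g}$, $\boldsymbol{G}$, $\boldsymbol{F}$, demands each be at most $\frac{\varepsilon^2}{144M_g^2L_f^2}$, $\frac{\varepsilon^2}{144M_f^2}$, $\frac{\varepsilon^2}{144}$ respectively (so the weighted total is $\frac{9}{144}\varepsilon^2=\frac{\varepsilon^2}{16}$), and then \emph{derives} each $(\alpha,\beta,\gamma)$ triple by setting the three summands of each sub-constraint equal; your plan instead verifies the already-aggregated condition \eqref{Proposition:IFO-3eps-accuracy:Eq:AccuracyCondition} term by term, which amounts to the same bookkeeping.

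One concrete slip: you assign the wrong role to the two non-trivial constraints in \eqref{Proposition:Choice-Paramaters:Eq:epsilon}. The bounds $\varepsilon<72L_\Phi M_gL_f^2\sqrt{H_3}$ and $\varepsilon<\frac{72L_\Phi M_f^2\sqrt{H_2}}{L_g}$ are \emph{not} what keeps $a_g,a_{\partial g}$ in $(0,1)$ (that would require $\varepsilon<\frac{L_\Phi\sqrt{H_3}}{M_g}$ and $\varepsilon<\frac{L_\Phi\sqrt{H_2}}{L_g}$, which are different quantities and are not imposed). They are used precisely to absorb the $\varepsilon$-dependent remainders — the cubic terms $\frac{\varepsilon^3}{72L_\Phi M_gL_f^2\sqrt{H_3}}$ and $\frac{L_g\varepsilon^3}{72L_\Phi M_f^2\sqrt{H_2}}$ in the increment estimates for $\boldsymbol{g}$ and $\boldsymbol{G}$ (equivalently, the $\frac{2\varepsilon}{\beta_g}$, $\frac{2\varepsilon}{\beta_{\partial g}}$ terms in \eqref{Proposition:IFO-3eps-accuracy:Eq:AccuracyCondition}) — into the ``$+1$'' summands appearing in the definition of $K_0$; the bound $\varepsilon<1$ alone does not suffice for this, since $\beta_g,\beta_{\partial g}$ carry dimensional constants. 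If you reassign the constraints accordingly, the rest of your verification goes through as the paper's does.
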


In practice, it may be more convenient to tune the parameters only to reach $\mathcal{O}(\ve)$-accuracy. So we have the following simple corollary, the proof of which is also left to the Supplementary Materials.

\begin{corollary}[Asymptotic IFO complexity to reach order-$\varepsilon$ accurate solution]\label{Proposition:AsymptoticEpsAccuracy}
Pick $\eta=\dfrac{1}{L_{\Phi}}$, and $a_{g}, a_{\partial g}, a_{\Phi}\sim \mathcal{O}(\varepsilon)$, $B_g, B_{\partial g}, B_f, S_g, S_{\partial g}, S_f\sim \mathcal{O}(\varepsilon^{-1})$ and $T\sim \mathcal{O}(\varepsilon^{-2})$, then for the output $\widehat{x}$ in Algorithm \ref{Alg:STORM-Compositional}, we have $\mathbf{E}\|\nabla \Phi(\widehat{x})\|\lesssim \mathcal{O}(\varepsilon)$ within IFO complexity $\lesssim \mathcal{O}(\varepsilon^{-3})$.
\end{corollary}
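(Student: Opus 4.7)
The plan is to derive this corollary as a direct consequence of Theorem \ref{Proposition:IFO-3eps-accuracy} and Proposition \ref{Theorem:MainTheoremConvergence}, by carefully tracking the $\varepsilon$-dependence of each term. First I would write the scaled parameters as $a_g = \alpha_g \varepsilon$, $a_{\partial g} = \alpha_{\partial g} \varepsilon$, $a_{\Phi} = \alpha_{\Phi} \varepsilon$, $B_g = \beta_g \varepsilon^{-1}$, $B_{\partial g} = \beta_{\partial g} \varepsilon^{-1}$, $B_f = \beta_f \varepsilon^{-1}$, $S_g = \gamma_g \varepsilon^{-1}$, $S_{\partial g} = \gamma_{\partial g} \varepsilon^{-1}$, $S_f = \gamma_f \varepsilon^{-1}$, and $T = c_T L_{\Phi} \Delta \varepsilon^{-2}$, where $\alpha_\cdot, \beta_\cdot, \gamma_\cdot, c_T$ are positive constants independent of $\varepsilon$. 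The key observation is that under this scaling, each compound term on the left-hand side of condition \eqref{Proposition:IFO-3eps-accuracy:Eq:AccuracyCondition} reduces to an expression depending only on the dimensionless constants $\alpha_\cdot, \beta_\cdot, \gamma_\cdot, c_T$ and on the fixed problem data $L_f, L_g, M_f, M_g, H_1, H_2, H_3, L_{\Phi}, \Delta$; the only residual $\varepsilon$-dependence appears in the benign factors $(2 + \alpha_{\partial g}\varepsilon)$ and $(2 + \alpha_g \varepsilon)$, which remain bounded as $\varepsilon \to 0$. Hence the entire left-hand side of \eqref{Proposition:IFO-3eps-accuracy:Eq:AccuracyCondition} is bounded uniformly by some absolute constant $A_0 > 0$, so hypothesis \eqref{Theorem:MainTheoremConvergence:Eq:ErrorAssumption} of Proposition \ref{Theorem:MainTheoremConvergence} holds with $A = A_0$.

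Given this, I would invoke Proposition \ref{Theorem:MainTheoremConvergence} directly with $\eta = 1/L_{\Phi}$. The first term in \eqref{Theorem:MainTheoremConvergence:Eq:FinalEstimate} becomes $\frac{2 L_{\Phi} \Delta}{T \varepsilon} = \frac{2}{c_T} \varepsilon$, and the second term $\left(\tfrac{1}{2} + A_0 + \sqrt{A_0}\right) \varepsilon$ is likewise proportional to $\varepsilon$, so $\mathbf{E}\|\nabla \Phi(\widehat{x})\| \lesssim \mathcal{O}(\varepsilon)$. Note that, unlike in Theorem \ref{Proposition:IFO-3eps-accuracy}, we do not need the tight bound $A \leq \tfrac{1}{16}$ here: any finite $A_0$ suffices, since the order-$\varepsilon$ target absorbs the resulting multiplicative constant.

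The IFO complexity then follows immediately from Lemma \ref{Lemma:IFO-Complexity-Precise}:
\begin{equation*}
\text{IFO} = S_g + S_{\partial g} + S_f + T\,(B_g + B_{\partial g} + B_f) = \mathcal{O}(\varepsilon^{-1}) + \mathcal{O}(\varepsilon^{-2})\cdot \mathcal{O}(\varepsilon^{-1}) \lesssim \mathcal{O}(\varepsilon^{-3}).
\end{equation*}
The proof is conceptually routine; the only slightly delicate part is the dimensional bookkeeping in the first paragraph, where one must verify term-by-term that the various powers of $\varepsilon$ in the numerators and denominators of \eqref{Proposition:IFO-3eps-accuracy:Eq:AccuracyCondition} cancel. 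Since all the substantive estimates (Corollary A.1, Lemmas A.4 and A.7 cited in the proof of Theorem \ref{Proposition:IFO-3eps-accuracy}) have already been carried out, no new technical obstacle arises; the corollary is essentially a reformulation of Theorem \ref{Proposition:IFO-3eps-accuracy} in its scaling-invariant form.
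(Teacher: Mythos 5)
Your proposal is correct and takes essentially the same route as the paper: both arguments reduce to checking that under the stated scalings the assembled upper bound on $\dfrac{1}{T}\sum\limits_{t=0}^{T-1}\mathbf{E}\|\boldsymbol{F}_t-\nabla \Phi(x_t)\|^2$ is $\mathcal{O}(\varepsilon^2)$ with some finite constant $A_0$ (not necessarily $\leq \tfrac{1}{16}$), and then invoking Proposition \ref{Theorem:MainTheoremConvergence} with $T\sim\mathcal{O}(\varepsilon^{-2})$ together with Lemma \ref{Lemma:IFO-Complexity-Precise}. The only organizational difference is that you reuse the already-assembled left-hand side of condition \eqref{Proposition:IFO-3eps-accuracy:Eq:AccuracyCondition}, whereas the paper re-verifies each of the three component error sums (for $\boldsymbol{g}$, $\boldsymbol{G}$ and $\boldsymbol{F}$) directly from Corollary \ref{Corollary:ErrorEstimateToBottom:g-partialg} and Lemma \ref{Lemma:ErrorEstimateToBottom:gradPhi} before recombining them via the three-term decomposition.
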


\section{Numerical Experiments}
\label{Sec:Experiment}

\begin{figure}
\captionsetup{margin=0cm, justification=centering}
\centering
\includegraphics[height=6cm,  width=0.75\textwidth]{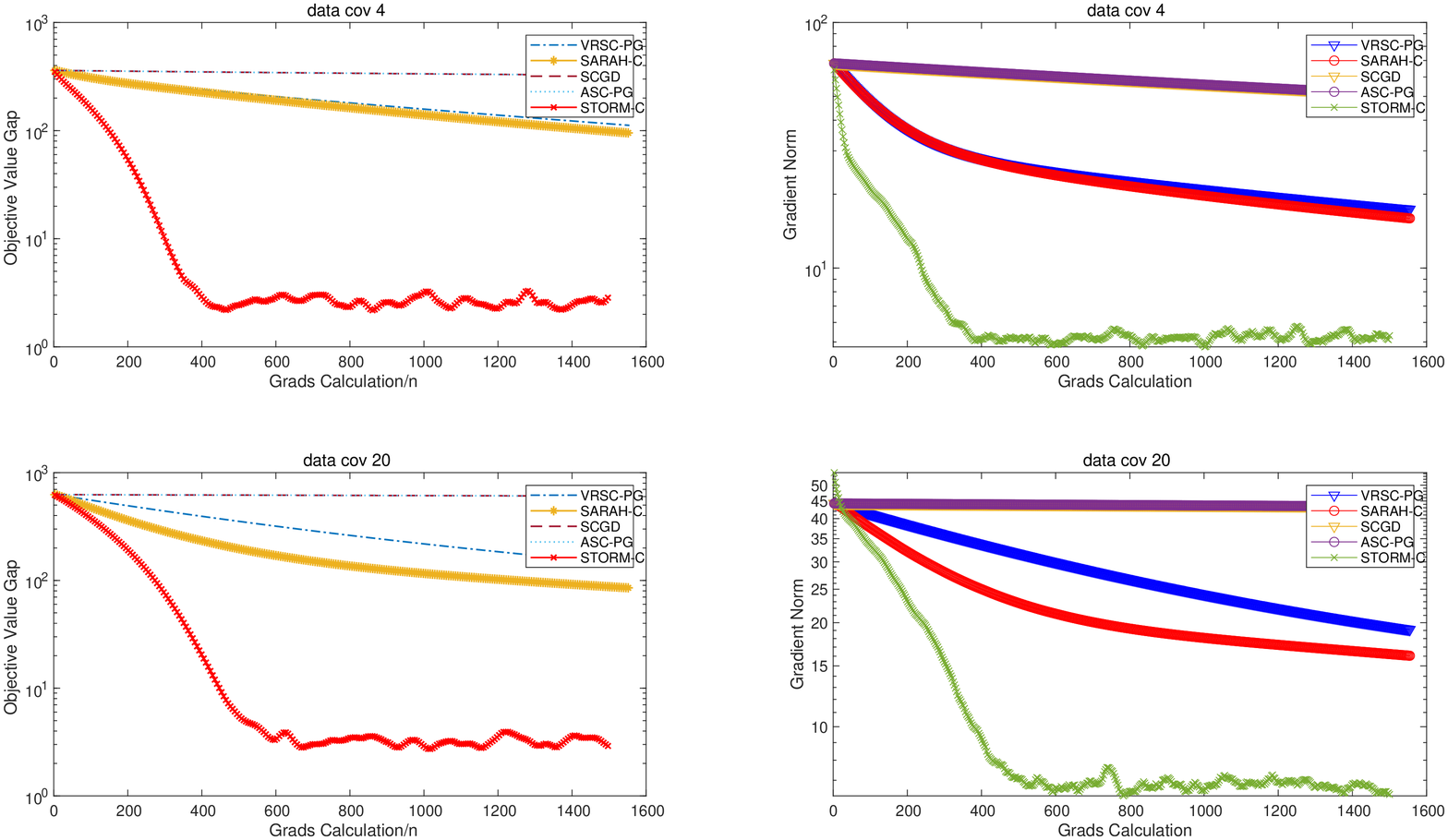}
\vspace{-0.5cm}
\caption{STORM-Compositional compared with other compositional optimization algorithms for Portfolio Management problem (finite-sum case): Left Column: Objective Function Value Gap (vertical axis) vs. Gradient Calculations (horizontal axis); Right Column: Objective Function Gradient Norm (vertical axis) vs. Gradient Calculation (horizontal axis).}
\label{Fig_port_finitesum_withreplace}
\end{figure}

We experiment here the risk-adverse portfolio management problem (see \cite{Portfolio-Optimization}, \cite{arXivSARAH-SCGD}), which can be formulated as
\vspace{-1mm}
\begin{equation}\label{Eq:PortfolioOptimization}
\min\li_{x\in \R^N} \left\{-\dfrac{1}{T}\sum\li_{t=1}^T \langle r_t, x\rangle+\dfrac{1}{T}\sum\li_{t=1}^T \left(\langle r_t, x\rangle-\dfrac{1}{T}\sum\li_{s=1}^T \langle r_s, x\rangle\right)^2\right\} \ ,
\end{equation}
where $r_t\in \R^N$ denotes the returns of $N$ assets at time $t$, and $x\in \R^N$ denotes the investment
quantity corresponding to $N$ assets. As explained in \cite{arXivSARAH-SCGD}, the example slightly does not obey our
Assumptions, but does so in a bounded domain of optimization, thus it still serves as a good example to validate our
theory. The goal is to maximize the return while controlling the
variance of the portfolio. %We write \eqref{Eq:PortfolioOptimization} as a compositional optimization problem with
%two functions 
%
%\begin{equation}\label{Eq:PortfolioOptimization:g}
%g(x)=\left[x_1, x_2, ..., x_N, \dfrac{1}{T}\sum\li_{s=1}^T \langle r_s, x\rangle\right]^{\top} \ ,
%\end{equation}
%
%\begin{equation}\label{Eq:PortfolioOptimization:f}
%f(y)=-\dfrac{1}{T} \sum_{t=1}^T \langle r_t, y_{\backslash (N+1)}\rangle+\dfrac{1}{T} \sum\li_{t=1}^T \left(\langle r_t, y_{\backslash (N+1)}\rangle-y_{N+1}\right)^2 \ , 
%\end{equation}
%where $\langle\cdot, \cdot\rangle$ is the inner product in $\R^N$ and $y=(y_1,..., y_{N+1}) \in \R^{N+1}$ and $y_{\backslash(N+1)}$ denotes the (column) subvector consisting of the first $N$ coordinates of $y$. This problem corresponds to \eqref{Eq:CompositionalOptimization} where we set $m=n=T$, $g_j(x)=[x_1,...,x_N,\langle r_j, x\rangle]^{\top}$, $j=1,2,...,m$ and $f_i(y)=-\langle r_i, y_{\backslash (N+1)}\rangle+\left(\langle r_i, y_{\backslash (N+1)}\rangle-y_{N+1}\right)^2$, $i=1,2,...,n$. Thus it is easy to calculate 
%
%\begin{equation}\label{Eq:PortfolioOptimization:grads}
%\pt g_j(x)=
%\begin{pmatrix}
%I_N
%\\
%r_j 
%\end{pmatrix}_{N\times (N+1)},
%\grad f_j(y)=
%\begin{pmatrix}
%-r_j^1+2r_j^1\left(\langle r_j, y_{\backslash (N+1)}\rangle-y_{N+1}\right) 
%\\
%-r_j^2+2r_j^2\left(\langle r_j, y_{\backslash (N+1)}\rangle-y_{N+1}\right) 
%\\
%...
%\\
%-r_j^N+2r_j^N\left(\langle r_j, y_{\backslash (N+1)}\rangle-y_{N+1}\right) 
%\\
%-2\left(\langle r_j, y_{\backslash (N+1)}\rangle-y_{N+1}\right)
%\end{pmatrix}\in \R^{N+1} \ ,
%\end{equation}
%where $I_N$ is the $N\times N$ identity matrix. 
See \cite{arXivSARAH-SCGD} for the same set-up. We compare our results with other compositional optimization algorithms, in particular the so-far claimed to be best algorithm SARAH-C (see \cite{arXivSARAH-SCGD}). To set the same standards, we developed our code based on the open source code in that work \footnote{\url{http://github.com/angeoz/SCGD}}. We compared both their finite-sum and online cases in SARAH-C by following exactly their set-up (details see \cite{arXivSARAH-SCGD}). In the finite-sum case the $r_t$'s are Gaussian with covariance matrices having condition numbers $4$ and $20$ where $T=2000, N=200$; in the online case the $r$'s are from $6$ different $25$-portfolio datasets where $T=7240, N=25$. We pick the same parameters as they used there for SARAH-C and other compositional optimization algorithms including VRSC-PG, SCGD and ASC-PG (see \cite{arXivSARAH-SCGD} for more details). In our case, we did not do much parameter tuning but we only take into consideration their relative orders suggested in Corollary \ref{Proposition:AsymptoticEpsAccuracy}. Uniformly for both choices of covariance matrices in the finite-sum case, and uniformly for all 6 families of data sets in the online case, we simply take $\ve=0.1, \eta=0.1, S_g=S_{\pt g}=S_f=B_g=B_{\pt g}=B_f=100, a_g=a_{\pt g}=a_{\Phi}=0.01$ and the results are plotted in Figures \ref{Fig_port_finitesum_withreplace} and \ref{Fig_port_OP_withreplace}, where left columns are for $\Phi(x)-\Phi^*$ and right columns are for the gradient norms as functions of IFO queries. It is seen that even for such simple and straightforward parameter setting, all data sets in each catagories (finite-sum or online) behave similarly, so that STORM-Compositional behaves at least as good as or better than (in the finite-sum case one should say much better than) SARAH-C and other compositional optimization algorithms after sufficient numbers of iterations.

\begin{figure}
\captionsetup{margin=0cm, justification=centering}
\centering
\includegraphics[height=10cm,  width=0.85\textwidth]{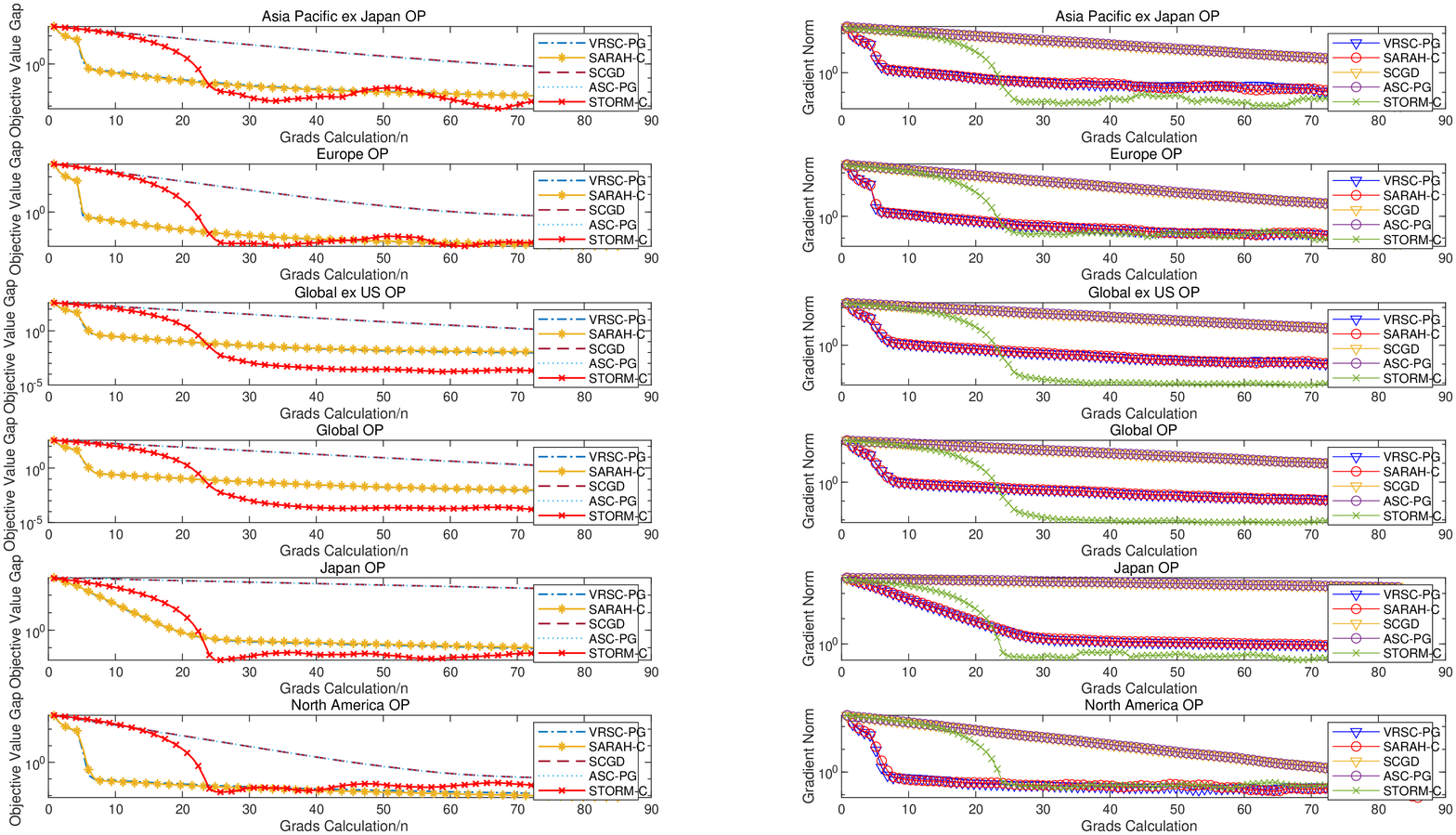}
\vspace{-0.8cm}
\caption{STORM-Compositional compared with other compositional optimization algorithms for Portfolio Management problem (online case): Left Column: Objective Function Value Gap (vertical axis) vs. Gradient Calculations (horizontal axis); Right Column: Objective Function Gradient Norm (vertical axis) vs. Gradient Calculation (horizontal axis).}
\label{Fig_port_OP_withreplace}
\end{figure}

\newpage

\section*{Broader Impacts}

This work proposes a novel compositional optimization method called STORM-Compositional that is a single-loop algorithm being easy to implement and simple to do parameter tuning, yet it matches the so far claimed to be best convergence rate $\cO(\ve^{-3})$ achieved by SARAH-C. This is definitely a positive outcome, which benefits many practical machine learning problems formulated by compositional optimization. From ethical point of view, this innovation will not raise direct concerns, ranging from the disappearance of traditional jobs, over responsibility for possible physical or psychological harm to human beings, to general dehumanization of human relationships and society at large.

\bibliographystyle{plain}
\bibliography{STORMSCGD_bibfile}

\newpage

\begin{center}
\textbf{\large{Supplementary Materials}}    
\end{center}
\appendix

\section{Auxiliary Lemmas}

We will first provide a general lemma regarding STORM estimation errors. This lemma is essentially a reformulation of Lemma 2 in \cite{STORM}, that was also re-developed in \cite{STORM-PolicyGradient}. We have

\begin{lemma}[General STORM Error Estimate]\label{Lemma:STORM-EstimatorGeneralError}
Set $q_i(x)$ to be a vector function in $\R^d$.
Let the function $q(x)=\dfrac{1}{n}\sum\limits_{i=1}^n q_i(x)$. Suppose that 
\begin{itemize}
\item[(1)] (finite variance) For a random index $i$ chosen uniformly randomly from $\{1,2,...,n\}$ we have $\mathbf{E}\|q_i(x)-q(x)\|^2\leq \sigma^2$;
\item[(2)] (stochastic Lipschitz) For a random index $i$ chosen uniformly randomly from $\{1,2,...,n\}$ and any $x, y$ we have $\mathbf{E}\|q_i(x)-q_i(y)\|^2\leq L^2\|x-y\|^2$.
\end{itemize} 

Consider the estimation $\boldsymbol{q}$ of quantity $q(x_t)$ via the stochastic ``with replacement" minibatch estimator $q(x_t, \mathcal{B}_t)=\dfrac{1}{B}\sum\limits_{i\in \mathcal{B}_t}q_i(x_t)$ following the recursion
$$\boldsymbol{q}_{t+1}\leftarrow (1-a_{t+1})\boldsymbol{q}_t+a_{t+1}q(x_{t+1}, \mathcal{B}_{t+1})+(1-a_{t+1})(q(x_{t+1}, \mathcal{B}_{t+1})-q(x_t, \mathcal{B}_{t+1})) \ ,$$
where $\{a_t\}_{t\geq 0}$ is a given numerical sequence, and $\{\mathcal{B}_t\}_{t\geq 0}$ is an i.i.d sequence of minibatches with common batchsize $B$, that are sampled uniformly randomly with replacement from $\{1,2,...,n\}$. 

Then we have the estimate 
\begin{equation}\label{Lemma:STORM-EstimatorGeneralError:Eq:Recursion-Error}
\mathbf{E}\|\boldsymbol{q}_{t+1}-q(x_{t+1})\|^2\leq (1-a_{t+1})^2\mathbf{E}\|\boldsymbol{q}_t-q(x_t)\|^2+\dfrac{2}{B}L^2(1-a_{t+1})^2\mathbf{E}\|x_{t+1}-x_t\|^2+\dfrac{2}{B}a_{t+1}^2\sigma^2 \ .
\end{equation}

Moreover, if $a_t=a\in (0,1)$ for all $t$, then we also have

\begin{equation}\label{Lemma:STORM-EstimatorGeneralError:Eq:ToBottom-Error}
\sum\limits_{t=0}^{T-1}\mathbf{E}\|\boldsymbol{q}_t-q(x_t)\|^2
\leq \dfrac{2}{a}\left[\dfrac{1}{B}L^2\sum\limits_{t=0}^{T-1}\mathbf{E}\|x_{t+1}-x_t\|^2+\dfrac{Ta^2\sigma^2}{B}+\mathbf{E}\|\boldsymbol{q}_0-q(x_0)\|^2\right] \ .
\end{equation}
\end{lemma}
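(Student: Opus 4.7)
The plan is to set $e_t = \boldsymbol{q}_t - q(x_t)$ and rewrite the STORM recursion in the form
\begin{equation*}
e_{t+1} = (1-a_{t+1})e_t + (1-a_{t+1})Z_{t+1} + a_{t+1}W_{t+1},
\end{equation*}
where $Z_{t+1} = [q(x_{t+1},\mathcal{B}_{t+1}) - q(x_t,\mathcal{B}_{t+1})] - [q(x_{t+1})-q(x_t)]$ is the SARAH-style correction and $W_{t+1} = q(x_{t+1},\mathcal{B}_{t+1}) - q(x_{t+1})$ is a pure SGD fluctuation. Let $\mathcal{F}_t$ be the $\sigma$-algebra generated by everything observed before the fresh minibatch $\mathcal{B}_{t+1}$ is drawn; then $e_t$ and $x_{t+1}$ are $\mathcal{F}_t$-measurable while $\mathbf{E}[Z_{t+1}\mid\mathcal{F}_t] = \mathbf{E}[W_{t+1}\mid\mathcal{F}_t] = 0$ by definition of the uniform with-replacement sample.

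Expanding $\|e_{t+1}\|^2$ and taking conditional expectation, the inner products with the $\mathcal{F}_t$-measurable $e_t$ vanish, giving
\begin{equation*}
\mathbf{E}[\|e_{t+1}\|^2\mid\mathcal{F}_t] = (1-a_{t+1})^2\|e_t\|^2 + \mathbf{E}\bigl[\|(1-a_{t+1})Z_{t+1}+a_{t+1}W_{t+1}\|^2\bigm|\mathcal{F}_t\bigr].
\end{equation*}
Applying $\|u+v\|^2\le 2\|u\|^2+2\|v\|^2$ splits the noise term. The key variance bounds are $\mathbf{E}[\|W_{t+1}\|^2\mid\mathcal{F}_t]\le \sigma^2/B$ via the finite-variance hypothesis, and $\mathbf{E}[\|Z_{t+1}\|^2\mid\mathcal{F}_t]\le L^2\|x_{t+1}-x_t\|^2/B$ via the stochastic Lipschitz hypothesis (using that $\mathrm{Var}(X)\le \mathbf{E}\|X\|^2$ applied to $q_i(x_{t+1})-q_i(x_t)$ and then averaging over the sample). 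Both bounds acquire the factor $1/B$ only because $\mathcal{B}_{t+1}$ consists of $B$ i.i.d. draws; this is the single place where with-replacement sampling is used. Taking unconditional expectation yields the one-step estimate \eqref{Lemma:STORM-EstimatorGeneralError:Eq:Recursion-Error}.

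For the telescoped bound \eqref{Lemma:STORM-EstimatorGeneralError:Eq:ToBottom-Error}, specialize to $a_t\equiv a\in(0,1)$ and use $(1-a)^2\le 1-a$ in the coefficient of $\mathbf{E}\|e_t\|^2$. Rearranging the one-step recursion gives
\begin{equation*}
a\,\mathbf{E}\|e_t\|^2 \le \mathbf{E}\|e_t\|^2 - \mathbf{E}\|e_{t+1}\|^2 + \tfrac{2L^2}{B}\mathbf{E}\|x_{t+1}-x_t\|^2 + \tfrac{2a^2\sigma^2}{B}.
\end{equation*}
Summing $t=0,\ldots,T-1$ telescopes the first two right-hand terms to $\mathbf{E}\|e_0\|^2 - \mathbf{E}\|e_T\|^2 \le \mathbf{E}\|e_0\|^2$. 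Dividing by $a$ and absorbing a harmless factor of $2$ on the initial-error term gives precisely \eqref{Lemma:STORM-EstimatorGeneralError:Eq:ToBottom-Error}.

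The only delicate point I anticipate is the filtration bookkeeping: one must be careful that $\mathcal{F}_t$ contains $x_{t+1}$ (which in Algorithm \ref{Alg:STORM-Compositional} depends on $\boldsymbol{F}_t$, hence on the minibatches drawn at earlier steps, but not on $\mathcal{B}_{t+1}$) so that $Z_{t+1}$ and $W_{t+1}$ remain conditionally mean-zero. Once this is set up correctly, the cross-term cancellation and the $1/B$ variance reduction are immediate, and everything else is bookkeeping.
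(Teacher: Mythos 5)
Your proposal is correct and follows essentially the same route as the paper's proof: the same decomposition of the error $e_{t+1}$ into the contraction term $(1-a_{t+1})e_t$, the SARAH-style correction $Z_{t+1}$ and the SGD fluctuation $W_{t+1}$, the same conditional-expectation cancellation of cross terms, the same $\|u+v\|^2\le 2\|u\|^2+2\|v\|^2$ split of the noise, the same use of with-replacement sampling to get the $1/B$ factors, and the same telescoping via $(1-a)^2\le 1-a$. The one (favorable) difference is in the Lipschitz noise term: you apply the i.i.d.\ mean-zero variance identity directly to the centered summands of $Z_{t+1}$, which is the correct way to extract the $L^2/B$ factor, whereas the paper first passes to the uncentered increment via $\mathbf{E}\|X-\mathbf{E}X\|^2\le\mathbf{E}\|X\|^2$ and then asserts $\mathbf{E}\|q(x_{t+1},\mathcal{B}_{t+1})-q(x_t,\mathcal{B}_{t+1})\|^2=\frac{1}{B}\mathbf{E}\|q_i(x_{t+1})-q_i(x_t)\|^2$, an identity that only holds for mean-zero summands — so your ordering of these two steps is actually the cleaner and strictly correct one.
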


\begin{proof}
Let $\mathcal{F}_t=\sigma(\mathcal{B}_1, ..., \mathcal{B}_t)$. We then expand
\begin{equation}\label{Lemma:STORM-EstimatorGeneralError:Eq:RecursiveExpansion:Step1}
\begin{array}{ll}
&\mathbf{E}\left[\left.\|\boldsymbol{q}_{t+1}-q(x_{t+1})\|^2\right|\mathcal{F}_t\right]
\\
=& \mathbf{E}\left[\left.\left\|(1-a_{t+1})\boldsymbol{q}_t+a_{t+1}q(x_{t+1}, \mathcal{B}_{t+1})+(1-a_{t+1})(q(x_{t+1}, \mathcal{B}_{t+1})-q(x_t, \mathcal{B}_{t+1}))-q(x_{t+1})\right\|^2\right|\mathcal{F}_t\right]
\\
=& \mathbf{E} \left[\left\|(1-a_{t+1})(\boldsymbol{q}_t-q(x_t))+a_{t+1}(q(x_{t+1}, \mathcal{B}_{t+1})-q(x_t))\right.\right.
\\
& \qquad \qquad \left.\left.\left.+(q(x_t)-q(x_{t+1}))+(1-a_{t+1})(q(x_{t+1}, \mathcal{B}_{t+1})-q(x_t, \mathcal{B}_{t+1}))\right\|^2\right|\mathcal{F}_t\right]
\\
=& \mathbf{E} \left[\left\|(1-a_{t+1})(\boldsymbol{q}_t-q(x_t))+(1-a_{t+1})(q(x_t)-q(x_{t+1}, \mathcal{B}_{t+1}))\right.\right.
\\
& \qquad \qquad \left.\left.\left.+(q(x_{t+1}, \mathcal{B}_{t+1})-q(x_{t+1}))+(1-a_{t+1})(q(x_{t+1}, \mathcal{B}_{t+1})-q(x_t, \mathcal{B}_{t+1}))\right\|^2\right|\mathcal{F}_t\right]
\\
=& \mathbf{E} \left[\left.\left\|(1-a_{t+1})(\boldsymbol{q}_t-q(x_t))+(1-a_{t+1})(q(x_t)-q(x_t, \mathcal{B}_{t+1}))+(q(x_{t+1}, \mathcal{B}_{t+1})-q(x_{t+1}))\right\|^2\right|\mathcal{F}_t\right]
\\
=& \mathbf{E} \left[\left\|(1-a_{t+1})(\boldsymbol{q}_t-q(x_t))+(1-a_{t+1})\left[(q(x_{t+1}, \mathcal{B}_{t+1})-q(x_{t}, \mathcal{B}_{t+1}))-(q(x_{t+1})-q(x_{t}))\right]\right.\right.
\\
& \qquad \qquad \left.\left.\left.+a_{t+1}(q(x_{t+1},\mathcal{B}_{t+1})-q(x_{t+1})\right\|^2\right|\mathcal{F}_t\right]
\\
\stackrel{(a)}{\leq} &(1-a_{t+1})^2\|\boldsymbol{q}_t-q(x_t)\|^2+2(1-a_{t+1})^2\mathbf{E}\left[\left.\|(q(x_{t+1}, \mathcal{B}_{t+1})-q(x_{t}, \mathcal{B}_{t+1}))-(q(x_{t+1})-q(x_{t}))\|^2\right|\mathcal{F}_t\right]
\\
& \qquad \qquad +2a_{t+1}^2\mathbf{E}\left[\left.\|q(x_{t+1},\mathcal{B}_{t+1})-q(x_{t+1})\|^2\right|\mathcal{F}_t\right] \ .
\end{array}
\end{equation}

Here (a) uses the independece of $\mathcal{B}_t$'s and unbiasedness of the stochastic minibatch estimator. We then take full expectation on both sides of \eqref{Lemma:STORM-EstimatorGeneralError:Eq:RecursiveExpansion:Step1}, and we get

\begin{equation}\label{Lemma:STORM-EstimatorGeneralError:Eq:RecursiveExpansion:Step2}
\begin{array}{ll}
&\mathbf{E}\|\boldsymbol{q}_{t+1}-q(x_{t+1})\|^2 
\\
\leq &(1-a_{t+1})^2\mathbf{E}\|\boldsymbol{q}_t-q(x_t)\|^2+2(1-a_{t+1})^2\mathbf{E}\|(q(x_{t+1}, \mathcal{B}_{t+1})-q(x_{t}, \mathcal{B}_{t+1}))-(q(x_{t+1})-q(x_{t}))\|^2
\\
& \qquad \qquad +2a_{t+1}^2\mathbf{E}\|q(x_{t+1},\mathcal{B}_{t+1})-q(x_{t+1})\|^2
\\
\stackrel{(b)}{\leq}&
(1-a_{t+1})^2\mathbf{E}\|\boldsymbol{q}_t-q(x_t)\|^2+2(1-a_{t+1})^2\mathbf{E}\|q(x_{t+1}, \mathcal{B}_{t+1})-q(x_{t}, \mathcal{B}_{t+1})\|^2
\\
& \qquad \qquad +2a_{t+1}^2\mathbf{E}\|q(x_{t+1},\mathcal{B}_{t+1})-q(x_{t+1})\|^2
\\
\stackrel{(c)}{\leq}& (1-a_{t+1})^2\mathbf{E}\|\boldsymbol{q}_t-q(x_t)\|^2+2\dfrac{1}{B}L^2(1-a_{t+1})^2\mathbf{E}\|x_{t+1}-x_t\|^2+2\dfrac{1}{B}a_{t+1}^2\sigma^2 \ ,
\end{array}
\end{equation}
which is just \eqref{Lemma:STORM-EstimatorGeneralError:Eq:Recursion-Error}. Here the estimate (b) uses $\mathbf{E}\|X-\mathbf{E} X\|^2\leq \mathbf{E}\|X\|^2$; the estimate (c) uses the finite variance, stochastic Lipschitz conditions and the fact that the minibatch $\mathcal{B}$ of size $B$ from $\{1,2,...,n\}$ if sampled uniformly randomly from $\{1,2,...,n\}$ \textit{with replacement}, so that

$$\begin{array}{ll}
\mathbf{E}\|q(x_{t+1}, \mathcal{B}_{t+1})-q(x_{t+1})\|^2 &=\mathbf{E}\left\|\dfrac{1}{B}\sum\limits_{i\in \mathcal{B}_{t+1}}(q_i(x_{t+1})-q(x_{t+1}))\right\|^2
\\
&=\dfrac{1}{B^2}\mathbf{E}\sum\limits_{i\in \mathcal{B}_{t+1}}\left\|q_i(x_{t+1})-q(x_{t+1})\right\|^2
\\
& = \dfrac{1}{B}\mathbf{E}\|q_i(x_{t+1})-q(x_{t+1})\|^2
\leq \dfrac{\sigma^2}{B} \ ,
\end{array}$$
and

$$\begin{array}{ll}
\mathbf{E}\|q(x_{t+1}, \mathcal{B}_{t+1})-q(x_t, \mathcal{B}_{t+1})\|^2 &=\mathbf{E}\left\|\dfrac{1}{B}\sum\limits_{i\in \mathcal{B}_{t+1}}(q_i(x_{t+1})-q_i(x_t))\right\|^2
\\
&=\dfrac{1}{B^2}\mathbf{E}\sum\limits_{i\in \mathcal{B}_{t+1}}\left\|(q_i(x_{t+1})-q_i(x_t))\right\|^2
\\
& = \dfrac{1}{B}\mathbf{E}\left\|q_i(x_{t+1})-q_i(x_t)\right\|^2
\leq  
\dfrac{L^2}{B}\mathbf{E}\|x_{t+1}-x_t\|^2 \ .
\end{array}$$

If $a_t=a\in (0,1)$ for all $t$, we can apply \eqref{Lemma:STORM-EstimatorGeneralError:Eq:Recursion-Error} recursively for $t=0, ..., T-1$ and obtain that

\begin{equation}\label{Lemma:STORM-EstimatorGeneralError:Eq:ToBottomError-1}
\sum\limits_{t=1}^T \mathbf{E}\|\boldsymbol{q}_{t}-q(x_t)\|^2  \leq (1-a)^2\sum\limits_{t=0}^{T-1}\mathbf{E}\|\boldsymbol{q}_{t}-q(x_t)\|^2+\dfrac{2}{B}L^2(1-a)^2\sum\limits_{t=0}^{T-1}\mathbf{E}\|x_{t+1}-x_t\|^2+2\dfrac{Ta^2\sigma^2}{B} \ .
\end{equation}

This gives

$$\begin{array}{ll}
&a\sum\limits_{t=0}^{T-1}\mathbf{E}\|\boldsymbol{q}_t-q(x_t)\|^2
\\
=&\sum\limits_{t=0}^{T-1}\mathbf{E}\|\boldsymbol{q}_t-q(x_t)\|^2-(1-a)\sum\limits_{t=0}^{T-1}\mathbf{E}\|\boldsymbol{q}_t-q(x_t)\|^2
\\
\stackrel{(a)}{\leq} & 
 \sum\limits_{t=1}^{T}\mathbf{E}\|\boldsymbol{q}_t-q(x_t)\|^2-(1-a)^2\sum\limits_{t=0}^{T-1}\mathbf{E}\|\boldsymbol{q}_t-q(x_t)\|^2+[\mathbf{E}\|q_0-q(x_0)\|^2-\mathbf{E}\|q_T-q(x_T)\|^2]
\\
\stackrel{(b)}{\leq} &
\dfrac{2}{B}L^2(1-a)^2\sum\limits_{t=0}^{T-1}\mathbf{E}\|x_{t+1}-x_t\|^2+2\dfrac{Ta^2\sigma^2}{B}+2\mathbf{E}\|q_0-q(x_0)\|^2\\
\stackrel{(c)}{\leq} &
\dfrac{2}{B}L^2\sum\limits_{t=0}^{T-1}\mathbf{E}\|x_{t+1}-x_t\|^2+2\dfrac{Ta^2\sigma^2}{B}+2\mathbf{E}\|q_0-q(x_0)\|^2\end{array}$$
Here in (a) we used $(1-a)\geq (1-a)^2$; in (b) we used \eqref{Lemma:STORM-EstimatorGeneralError:Eq:ToBottomError-1} and in (c) we used $a\in (0,1)$. Dividing by $a$ on both sides we obtain \eqref{Lemma:STORM-EstimatorGeneralError:Eq:ToBottom-Error}.
\end{proof}

Notice that Lemma \ref{Lemma:STORM-EstimatorGeneralError} can also be applied to the case when the $q$'s are matrices, but in that case in both assumptions and conclusions we have to replace all the operator norms $\|\cdot\|$ by Frobenius norms $\|\cdot\|_F$ correspondingly, since this is needed to conclude the last step in \eqref{Lemma:STORM-EstimatorGeneralError:Eq:RecursiveExpansion:Step1} as well as the fact that $\mathbf{E}\|q(x_{t+1}, \mathcal{B}_{t+1})-q(x_{t+1})\|_F^2
\leq \dfrac{\sigma^2}{B}$ and $\mathbf{E}\|q(x_{t+1}, \mathcal{B}_{t+1})-q(x_t, \mathcal{B}_{t+1})\|_F^2 \leq 
\dfrac{L^2}{B}\mathbf{E}\|x_{t+1}-x_t\|^2$. So we have the following corresponding assumptions and inequalities for matrix-valued $q$: 

Assume

\begin{itemize}
\item[(1)] (finite variance) For a random index $i$ chosen uniformly randomly from $\{1,2,...,n\}$ we have $\mathbf{E}\|q_i(x)-q(x)\|_F^2\leq \sigma^2$;
\item[(2)] (stochastic Lipschitz) For a random index $i$ chosen uniformly randomly from $\{1,2,...,n\}$ and any $x, y$ we have $\mathbf{E}\|q_i(x)-q_i(y)\|_F^2\leq L^2\|x-y\|^2$.
\end{itemize} 

Then conclude
\begin{itemize}
\item[(a)]
$$
\mathbf{E}\|\boldsymbol{q}_{t+1}-q(x_{t+1})\|_F^2\leq (1-a_{t+1})^2\mathbf{E}\|\boldsymbol{q}_t-q(x_t)\|_F^2+\dfrac{2}{B}L^2(1-a_{t+1})^2\mathbf{E}\|x_{t+1}-x_t\|^2+\dfrac{2}{B}a_{t+1}^2\sigma^2 \ ;
$$
\item[(b)]
$$\sum\limits_{t=0}^{T-1}\mathbf{E}\|\boldsymbol{q}_t-q(x_t)\|_F^2
\leq \dfrac{2}{a}\left[\dfrac{1}{B}L^2\sum\limits_{t=0}^{T-1}\mathbf{E}\|x_{t+1}-x_t\|^2+\dfrac{Ta^2\sigma^2}{B}+\mathbf{E}\|\boldsymbol{q}_0-q(x_0)\|_F^2\right] \ .
$$
\end{itemize}

These are indeed why we need $\|\cdot\|_F$ type assumptions in our Assumptions \ref{Assumption:Smoothness} and \ref{Assumption:FiniteVariance}. Below we will explicitly distinguish $\|A\|$ and $\|A\|_F$ when $A$ is a matrix. Since indeed $\|A\|\leq \|A\|_F$, our assumptions still lead us to the valid proof. With all these, Lemma \ref{Lemma:STORM-EstimatorGeneralError} can be directly applied to $q=g$ and $q=\partial g$, so that we obtain 

\begin{corollary}[Error Estimate for $\boldsymbol{g}$ and $\boldsymbol{G}$ sequences]\label{Corollary:ErrorEstimateToBottom:g-partialg}
We have

\begin{equation}\label{Corollary:ErrorEstimateToBottom:Eq:Recursive-g}
\mathbf{E}\|\boldsymbol{g}_{t+1}-g(x_{t+1})\|^2\leq 
(1-a_{g})^2\mathbf{E}\|\boldsymbol{g}_t-g(x_t)\|^2+\dfrac{2}{B_{g}}M_g^2\mathbf{E}\|x_{t+1}-x_t\|^2+\dfrac{2}{B_{g}}(a_{g})^2H_3 \ ,
\end{equation}

\begin{equation}\label{Corollary:ErrorEstimateToBottom:Eq:Recursive-partialg}
\mathbf{E}\|\boldsymbol{G}_{t+1}-\partial g(x_{t+1})\|_F^2\leq 
(1-a_{\partial g})^2\mathbf{E}\|\boldsymbol{G}_t-\partial g(x_t)\|_F^2+\dfrac{2}{B_{\partial g}}L_g^2\mathbf{E}\|x_{t+1}-x_t\|^2+\dfrac{2}{B_{\partial g}}(a_{\partial g})^2H_2 \ ,
\end{equation}

\begin{equation}\label{Corollary:ErrorEstimateToBottom:Eq:g}
\sum\limits_{t=0}^{T-1}\mathbf{E}\|\boldsymbol{g}_t-g(x_t)\|^2
\leq\dfrac{2}{a_g}\left[\dfrac{1}{B_g}M_g^2\sum\limits_{t=0}^{T-1}\mathbf{E}\|x_{t+1}-x_t\|^2+\dfrac{Ta_g^2H_3}{B_g}+\mathbf{E}\|\boldsymbol{g}_0-g(x_0)\|^2\right] \ ,
\end{equation}

\begin{equation}\label{Corollary:ErrorEstimateToBottom:Eq:partialg}
\sum\limits_{t=0}^{T-1}\mathbf{E}\|\boldsymbol{G}_t-\partial g(x_t)\|_F^2
\leq\dfrac{2}{a_{\partial g}}\left[\dfrac{1}{B_{\partial g}}L_g^2\sum\limits_{t=0}^{T-1}\mathbf{E}\|x_{t+1}-x_t\|^2+\dfrac{Ta_{\partial g}^2H_2}{B_{\partial g}}+\mathbf{E}\|\boldsymbol{G}_0-\partial g(x_0)\|_F^2\right] \ .
\end{equation}

\end{corollary}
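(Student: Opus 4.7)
The corollary is essentially a direct specialization of Lemma A.1 (General STORM Error Estimate), applied twice: once in its scalar-norm form with $q = g$, and once in its Frobenius-norm matrix version (stated in the remarks right after Lemma A.1) with $q = \partial g$. My plan is therefore to verify that each of the two target quantities satisfies the (finite variance) and (stochastic Lipschitz) hypotheses of Lemma A.1, and then read off the four inequalities, lightly cosmetically simplifying the first two.

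For the $\boldsymbol{g}$ sequence, I would take the role of $q_i$ to be played by $g_j$ with $j$ uniform on $\{1,\dots,m\}$. The finite-variance hypothesis with $\sigma^2 = H_3$ is exactly the third inequality of Assumption \ref{Assumption:FiniteVariance}. The stochastic Lipschitz hypothesis with $L = M_g$ is the pointwise bound $\|g_j(x)-g_j(x')\|\le M_g\|x-x'\|$ from \eqref{Eq:Lipschitz-g}, which was deduced in the main text from the boundedness of $\partial g_j$ in Assumption \ref{Assumption:Boundedness}. The iteration defining $\boldsymbol{g}_{t+1}$ in Step 8 of Algorithm \ref{Alg:STORM-Compositional} is exactly the STORM recursion in Lemma A.1 with constant weight $a_{t+1}\equiv a_g$ and minibatch size $B_g$. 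Substituting these into Eq. \eqref{Lemma:STORM-EstimatorGeneralError:Eq:Recursion-Error} of Lemma A.1 yields the stated \eqref{Corollary:ErrorEstimateToBottom:Eq:Recursive-g}, where I simply use $(1-a_g)^2\le 1$ in front of the $\mathbf{E}\|x_{t+1}-x_t\|^2$ and $(a_g)^2 H_3$ terms to drop the extra factor. Substituting into \eqref{Lemma:STORM-EstimatorGeneralError:Eq:ToBottom-Error} yields \eqref{Corollary:ErrorEstimateToBottom:Eq:g} verbatim.

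For the $\boldsymbol{G}$ sequence, I would use the matrix-valued analogue of Lemma A.1 (the bulleted (a)--(b) inequalities stated immediately after its proof, which replace every $\|\cdot\|$ by $\|\cdot\|_F$ for matrix-valued estimators). Taking $q_j = \partial g_j$ with $j$ uniform on $\{1,\dots,m\}$, the finite-variance hypothesis with $\sigma^2 = H_2$ in Frobenius norm is the second inequality of Assumption \ref{Assumption:FiniteVariance}, and the stochastic Lipschitz hypothesis with $L = L_g$ in Frobenius norm is the first inequality of Assumption \ref{Assumption:Smoothness} (both were stated precisely in Frobenius form for this reason, as noted by the authors right after \eqref{Assumption:FiniteVariance:Eq:BoundsVariance}). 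The update in Step 9 of Algorithm \ref{Alg:STORM-Compositional} matches the STORM recursion with constant weight $a_{\partial g}$ and batchsize $B_{\partial g}$, so the matrix versions of (a) and (b) immediately give \eqref{Corollary:ErrorEstimateToBottom:Eq:Recursive-partialg} (after again using $(1-a_{\partial g})^2\le 1$ in the cross and variance terms) and \eqref{Corollary:ErrorEstimateToBottom:Eq:partialg} verbatim.

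There is no genuine obstacle here; the only thing to be careful about is the choice of norm in the matrix case. If one tried to use the operator norm, the last identity in step (c) of the proof of Lemma A.1, which converts $\mathbf{E}\|B^{-1}\sum_{i\in\mathcal{B}}(\cdot_i)\|^2$ into $B^{-1}\mathbf{E}\|\cdot_i\|^2$ via independence of the with-replacement samples, would fail — that identity rests on the inner-product structure that the Frobenius norm shares with the Euclidean norm. This is precisely why the smoothness and variance assumptions on $\partial g_j$ are stated in Frobenius norm, and matching these to the matrix-version hypotheses of Lemma A.1 is the only bookkeeping step the proof requires.
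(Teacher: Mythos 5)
Your proposal is correct and is essentially identical to the paper's treatment: the paper gives no separate proof of this corollary, stating only that Lemma A.1 (in its vector form for $q=g$ and its Frobenius-norm matrix form for $q=\partial g$) applies directly with $\sigma^2=H_3$, $L=M_g$ and $\sigma^2=H_2$, $L=L_g$ respectively, exactly as you verify. The only trivial imprecision is your remark about dropping a $(1-a_g)^2$ factor from the $(a_g)^2H_3$ term, which carries no such factor in Lemma A.1 to begin with; the factor is only dropped from the $\mathbf{E}\|x_{t+1}-x_t\|^2$ term.
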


We notice that \eqref{Corollary:ErrorEstimateToBottom:Eq:Recursive-g} and \eqref{Corollary:ErrorEstimateToBottom:Eq:g} also give us an estimate for $\sum\limits_{t=0}^{T-1}\mathbf{E}\|\boldsymbol{g}_{t+1}-\boldsymbol{g}_t\|^2$:

\begin{lemma}[Incremental Estimate for $\boldsymbol{g}$ sequence]\label{Lemma:Increment-boldg} We have
\begin{equation}\label{Lemma:Increment-boldg:Eq:Estimate}
\begin{array}{ll}
&\sum\limits_{t=0}^{T-1}\mathbf{E}\|\boldsymbol{g}_{t+1}-\boldsymbol{g}_t\|^2
\\
\leq &
\left[\dfrac{12M_g^2}{a_gB_g}+\dfrac{6M_g^2}{B_g}+3M_g^2\right]\sum\limits_{t=0}^{T-1}\mathbf{E}\|x_{t+1}-x_t\|^2
\\
& \qquad 
+\dfrac{12Ta_gH_3}{B_g}+\dfrac{12}{a_g}\mathbf{E}\|\boldsymbol{g}_0-g(x_0)\|^2 +\dfrac{6T}{B_g}(a_g)^2H_3 \ .
\end{array}
\end{equation} 
\end{lemma}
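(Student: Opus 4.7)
The plan is to control the increment $\boldsymbol{g}_{t+1} - \boldsymbol{g}_t$ by trading it for two STORM estimation errors plus a deterministic Lipschitz piece. Concretely, I start from the algebraic decomposition
\begin{equation*}
\boldsymbol{g}_{t+1} - \boldsymbol{g}_t \;=\; \bigl(\boldsymbol{g}_{t+1} - g(x_{t+1})\bigr) \;-\; \bigl(\boldsymbol{g}_t - g(x_t)\bigr) \;+\; \bigl(g(x_{t+1}) - g(x_t)\bigr),
\end{equation*}
apply $\|a+b+c\|^2 \leq 3(\|a\|^2 + \|b\|^2 + \|c\|^2)$, take expectations, and bound the last piece deterministically by $M_g^2\|x_{t+1}-x_t\|^2$ via the Lipschitz estimate \eqref{Eq:Lipschitz-g}. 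Summing over $t=0,\ldots,T-1$ then reduces the lemma to controlling the two sums $\sum_{t=0}^{T-1}\mathbf{E}\|\boldsymbol{g}_t - g(x_t)\|^2$ and $\sum_{t=0}^{T-1}\mathbf{E}\|\boldsymbol{g}_{t+1} - g(x_{t+1})\|^2$, plus the term $3M_g^2 \sum_{t=0}^{T-1}\mathbf{E}\|x_{t+1}-x_t\|^2$ that directly supplies the $3M_g^2$ coefficient in the target.

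The first sum is handled by a direct application of \eqref{Corollary:ErrorEstimateToBottom:Eq:g}, which after multiplication by $3$ contributes $\frac{6M_g^2}{a_g B_g}\sum \mathbf{E}\|x_{t+1}-x_t\|^2$, $\frac{6T a_g H_3}{B_g}$, and $\frac{6}{a_g}\mathbf{E}\|\boldsymbol{g}_0-g(x_0)\|^2$. The second sum is a shifted copy of the first that is not directly covered by \eqref{Corollary:ErrorEstimateToBottom:Eq:g}, so my plan is to pass through the single-step recursion \eqref{Corollary:ErrorEstimateToBottom:Eq:Recursive-g}: summing that recursion for $t=0,\ldots,T-1$ and bounding $(1-a_g)^2 \leq 1$ writes $\sum_{t=0}^{T-1}\mathbf{E}\|\boldsymbol{g}_{t+1}-g(x_{t+1})\|^2$ in terms of $\sum_{t=0}^{T-1}\mathbf{E}\|\boldsymbol{g}_t-g(x_t)\|^2$ plus explicit remainders $\frac{2M_g^2}{B_g}\sum\mathbf{E}\|x_{t+1}-x_t\|^2$ and $\frac{2T a_g^2 H_3}{B_g}$. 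Re-invoking \eqref{Corollary:ErrorEstimateToBottom:Eq:g} on the unshifted sum duplicates the $\frac{1}{a_g}$-type contributions, bringing them up to the advertised coefficient $12$, while the two remainders become, after the factor of $3$, the $\frac{6M_g^2}{B_g}$ and $\frac{6 T a_g^2 H_3}{B_g}$ pieces in the target.

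Collecting the Lipschitz piece together with all contributions from the two sums yields exactly \eqref{Lemma:Increment-boldg:Eq:Estimate} with the constants $12$, $6$, and $3$ as stated. I expect no real obstacle here; the only delicate point is the shift in the index of the second sum, which forces an extra pass through \eqref{Corollary:ErrorEstimateToBottom:Eq:Recursive-g} and is precisely the source of the small-coefficient $\frac{6T a_g^2 H_3}{B_g}$ term in the final bound. The remainder of the argument is routine tracking of constants, with no probabilistic subtleties beyond those already absorbed into Corollary~\ref{Corollary:ErrorEstimateToBottom:g-partialg}.
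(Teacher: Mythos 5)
Your proposal is correct and follows essentially the same route as the paper: the same three-term decomposition with $\|a+b+c\|^2\leq 3(\|a\|^2+\|b\|^2+\|c\|^2)$, the Lipschitz bound $M_g^2\|x_{t+1}-x_t\|^2$ from \eqref{Eq:Lipschitz-g}, the single-step recursion \eqref{Corollary:ErrorEstimateToBottom:Eq:Recursive-g} to absorb the shifted term, and the summed bound \eqref{Corollary:ErrorEstimateToBottom:Eq:g}, yielding identical constants. The only (immaterial) difference is that the paper applies the recursion pointwise before summing, whereas you sum first and then telescope the shifted sum.
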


\begin{proof}
We can easily estimate, using \eqref{Corollary:ErrorEstimateToBottom:Eq:Recursive-g} and the inequality that $\mathbf{E}\|a+b+c\|^2\leq 3(\mathbf{E}\|a\|^2+\mathbf{E}\|b\|^2+\mathbf{E}\|c\|^2)$, as well as $0\leq 1-a_{g}\leq 1$, the following
\begin{equation}\label{Lemma:Increment-boldg:Eq:Step1}
\begin{array}{ll}
&\mathbf{E}\|\boldsymbol{g}_{t+1}-\boldsymbol{g}_t\|^2
\\
=&\mathbf{E}\|(\boldsymbol{g}_{t+1}-g(x_{t+1}))+(g(x_{t+1})-g(x_t))+(g(x_t)-\boldsymbol{g}_t)\|^2
\\
\leq & 3\mathbf{E}\|\boldsymbol{g}_{t+1}-g(x_{t+1})\|^2+3\mathbf{E}\|g(x_{t+1})-g(x_t)\|^2+3\mathbf{E}\|g(x_t)-\boldsymbol{g}_t\|^2
\\
\leq & 3[(1-a_g)^2+1]\mathbf{E}\|\boldsymbol{g}_t-g(x_t)\|^2+\left[\dfrac{6}{B_g}M_g^2(1-a_g)^2+3M_g^2\right]\mathbf{E}\|x_{t+1}-x_t\|^2+\dfrac{6}{B_g}(a_g)^2H_3 
\\
\leq & 6\mathbf{E}\|\boldsymbol{g}_t-g(x_t)\|^2+3M_g^2\left[\dfrac{2}{B_g}+1\right]\mathbf{E}\|x_{t+1}-x_t\|^2+\dfrac{6}{B_g}(a_g)^2H_3 \ .
\end{array}
\end{equation}
Summing from $t=0$ to $t=T-1$, and making use of \eqref{Corollary:ErrorEstimateToBottom:Eq:g} we obtain
$$\begin{array}{ll}
&\sum\limits_{t=0}^{T-1}\mathbf{E}\|\boldsymbol{g}_{t+1}-\boldsymbol{g}_t\|^2
\\
\leq &
\dfrac{12}{a_g}\left[\dfrac{1}{B_g}M_g^2\sum\limits_{t=0}^{T-1}\mathbf{E}\|x_{t+1}-x_t\|^2+\dfrac{Ta_g^2H_3}{B_g}+\mathbf{E}\|\boldsymbol{g}_0-g(x_0)\|^2\right] 
\\
& \qquad + 3\left[\dfrac{2M_g^2}{B_g}+M_g^2\right]\sum\limits_{t=0}^{T-1}\mathbf{E}\|x_{t+1}-x_t\|^2+\dfrac{6T}{B_g}(a_g)^2H_3 \ ,
\end{array}$$ which is \eqref{Lemma:Increment-boldg:Eq:Estimate}.
\end{proof}

Similarly, \eqref{Corollary:ErrorEstimateToBottom:Eq:Recursive-partialg} and \eqref{Corollary:ErrorEstimateToBottom:Eq:partialg} give us an estimate for $\sum\limits_{t=0}^{T-1}\mathbf{E}\|\boldsymbol{G}_{t+1}-\boldsymbol{G}_t\|_F^2$:

\begin{lemma}[Incremental Estimate for the $\boldsymbol{G}$ sequence]\label{Lemma:Increment-boldG}
We have
\begin{equation}\label{Lemma:Increment-boldG:Eq:Estimate}
\begin{array}{ll}
&\sum\limits_{t=0}^{T-1}\mathbf{E}\|\boldsymbol{G}_{t+1}-\boldsymbol{G}_t\|_F^2
\\
\leq &
\left[\dfrac{12L_g^2}{a_{\partial g}B_{\partial g}}+\dfrac{6L_g^2}{B_{\partial g}}+3L_g^2\right]\sum\limits_{t=0}^{T-1}\mathbf{E}\|x_{t+1}-x_t\|^2
\\
& \qquad 
+\dfrac{12Ta_{\partial g}H_2}{B_{\partial g}}+\dfrac{12}{a_{\partial g}}\mathbf{E}\|\boldsymbol{G}_0-\partial g(x_0)\|_F^2 +\dfrac{6T}{B_{\partial g}}(a_{\partial g})^2H_2 \ .
\end{array}
\end{equation}
\end{lemma}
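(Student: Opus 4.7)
The plan is to mimic the proof of Lemma A.2 verbatim, with two substitutions dictated by the assumptions: the Lipschitz constant for the Jacobian is $L_g$ rather than $M_g$ (so $L_g^2$ plays the role of $M_g^2$), the variance bound is $H_2$ instead of $H_3$, and all Euclidean norms are replaced by Frobenius norms. The Frobenius-norm machinery we need is the matrix version of Lemma A.1 that was spelled out in the remark following its proof, which supplies both the one-step and telescoped versions of the STORM error bound with $\|\cdot\|_F$ in place of $\|\cdot\|$.

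First I would perform the three-term split
\[
\boldsymbol{G}_{t+1}-\boldsymbol{G}_t = \bigl(\boldsymbol{G}_{t+1}-\partial g(x_{t+1})\bigr) + \bigl(\partial g(x_{t+1})-\partial g(x_t)\bigr) + \bigl(\partial g(x_t)-\boldsymbol{G}_t\bigr),
\]
apply $\|a+b+c\|_F^2\leq 3(\|a\|_F^2+\|b\|_F^2+\|c\|_F^2)$, bound the middle piece by $L_g^2\,\mathbf{E}\|x_{t+1}-x_t\|^2$ via Assumption \ref{Assumption:Smoothness}, and bound the first piece by the one-step recursion \eqref{Corollary:ErrorEstimateToBottom:Eq:Recursive-partialg} from Corollary \ref{Corollary:ErrorEstimateToBottom:g-partialg}. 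Using $(1-a_{\partial g})^2\leq 1$ to consolidate coefficients, this yields the per-step estimate
\[
\mathbf{E}\|\boldsymbol{G}_{t+1}-\boldsymbol{G}_t\|_F^2 \leq 6\,\mathbf{E}\|\boldsymbol{G}_t-\partial g(x_t)\|_F^2 + 3L_g^2\Bigl[\tfrac{2}{B_{\partial g}}+1\Bigr]\mathbf{E}\|x_{t+1}-x_t\|^2 + \tfrac{6}{B_{\partial g}}(a_{\partial g})^2 H_2,
\]
which is the exact analogue of the displayed bound inside the proof of Lemma \ref{Lemma:Increment-boldg}.

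Then I would sum from $t=0$ to $t=T-1$ and invoke the telescoped bound \eqref{Corollary:ErrorEstimateToBottom:Eq:partialg} to dominate $6\sum_{t=0}^{T-1}\mathbf{E}\|\boldsymbol{G}_t-\partial g(x_t)\|_F^2$. This substitution contributes the $\frac{12L_g^2}{a_{\partial g}B_{\partial g}}$ coefficient on the squared-displacement sum, together with the additive terms $\frac{12Ta_{\partial g}H_2}{B_{\partial g}}$ and $\frac{12}{a_{\partial g}}\mathbf{E}\|\boldsymbol{G}_0-\partial g(x_0)\|_F^2$. Combining these with the residual $3L_g^2[\frac{2}{B_{\partial g}}+1]$ attached to the displacement sum and the residual $\frac{6T}{B_{\partial g}}(a_{\partial g})^2 H_2$ produces precisely the coefficients $\frac{12L_g^2}{a_{\partial g}B_{\partial g}}+\frac{6L_g^2}{B_{\partial g}}+3L_g^2$ on $\sum_t\mathbf{E}\|x_{t+1}-x_t\|^2$, completing the stated inequality.

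There is no genuine obstacle here: the argument is mechanically parallel to Lemma \ref{Lemma:Increment-boldg}, and the only non-routine bookkeeping is making sure the Frobenius-norm variants of Lemma \ref{Lemma:STORM-EstimatorGeneralError} are invoked (which is legitimate because Assumptions \ref{Assumption:Smoothness} and \ref{Assumption:FiniteVariance} were explicitly stated in Frobenius norm for $\partial g$), and that one does \emph{not} accidentally use $M_g$ (the uniform operator-norm bound on $\partial g$) in place of $L_g$ (the Lipschitz constant of $\partial g$ in Frobenius norm) when bounding $\|\partial g(x_{t+1})-\partial g(x_t)\|_F$.
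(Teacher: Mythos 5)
Your proposal is correct and follows essentially the same route as the paper's own proof: the identical three-term decomposition, the bound $\|a+b+c\|_F^2\leq 3(\|a\|_F^2+\|b\|_F^2+\|c\|_F^2)$, the one-step recursion \eqref{Corollary:ErrorEstimateToBottom:Eq:Recursive-partialg} for the first piece, and the telescoped bound \eqref{Corollary:ErrorEstimateToBottom:Eq:partialg} after summing. The coefficients you track ($6$ on the error sum, $3L_g^2[\tfrac{2}{B_{\partial g}}+1]$ on the displacement, $\tfrac{6}{B_{\partial g}}(a_{\partial g})^2H_2$ residual) reproduce the paper's intermediate display exactly, and your caution about using $L_g$ rather than $M_g$ for the Jacobian's Lipschitz constant is precisely the right bookkeeping point.
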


\begin{proof}
We can easily estimate, using \eqref{Corollary:ErrorEstimateToBottom:Eq:Recursive-partialg} and the inequality that $\mathbf{E}\|a+b+c\|^2\leq 3(\mathbf{E}\|a\|^2+\mathbf{E}\|b\|^2+\mathbf{E}\|c\|^2)$, as well as $1-a_{\partial g}\leq 1$, the following
\begin{equation}\label{Lemma:Increment-boldG:Eq:Step1}
\begin{array}{ll}
&\mathbf{E}\|\boldsymbol{G}_{t+1}-\boldsymbol{G}_t\|_F^2
\\
=&\mathbf{E}\|(\boldsymbol{G}_{t+1}-\partial g(x_{t+1}))+(\partial g(x_{t+1})-\partial g(x_t))+(\partial g(x_t)-\boldsymbol{G}_t)\|_F^2
\\
\leq & 3\mathbf{E}\|\boldsymbol{G}_{t+1}-\partial g(x_{t+1})\|_F^2+3\mathbf{E}\|\partial g(x_{t+1})-\partial g(x_t)\|_F^2+3\mathbf{E}\|\partial g(x_t)-\boldsymbol{G}_t\|_F^2
\\
\leq & 3[(1-a_{\partial g})^2+1]\mathbf{E}\|\boldsymbol{G}_t-\partial g(x_t)\|_F^2+\left[\dfrac{6}{B_{\partial g}}L_g^2(1-a_{\partial g})^2+3L_g^2\right]\mathbf{E}\|x_{t+1}-x_t\|^2+\dfrac{6}{B_{\partial g}}(a_{\partial g})^2H_2 
\\
\leq & 6\mathbf{E}\|\boldsymbol{G}_t-\partial g(x_t)\|_F^2+3\left[\dfrac{2L_g^2}{B_{\partial g}}+L_g^2\right]\mathbf{E}\|x_{t+1}-x_t\|^2+\dfrac{6}{B_{\partial g}}(a_{\partial g})^2H_2 \ .
\end{array}
\end{equation}
Summing from $t=0$ to $t=T-1$ and making use of \eqref{Corollary:ErrorEstimateToBottom:Eq:partialg} we obtain

$$\begin{array}{ll}
&\sum\limits_{t=0}^{T-1}\mathbf{E}\|\boldsymbol{G}_{t+1}-\boldsymbol{G}_t\|_F^2
\\
\leq &
\dfrac{12}{a_{\partial g}}\left[\dfrac{1}{B_{\partial g}}L_g^2\sum\limits_{t=0}^{T-1}\mathbf{E}\|x_{t+1}-x_t\|^2+\dfrac{Ta_{\partial g}^2H_2}{B_{\partial g}}+\mathbf{E}\|\boldsymbol{G}_0-\partial g(x_0)\|_F^2\right] 
\\
& \qquad + 3\left[\dfrac{2L_g^2}{B_{\partial g}}+L_g^2\right]\sum\limits_{t=0}^{T-1}\mathbf{E}\|x_{t+1}-x_t\|^2+\dfrac{6T}{B_{\partial g}}(a_{\partial g})^2H_2 \ ,
\end{array}$$ which is \eqref{Lemma:Increment-boldG:Eq:Estimate}. 
\end{proof}

To obtain recursive estimates of similar type for $\boldsymbol{F}$ sequence, a little bit more involved technicalities are necessary.

We shall first provide an auxiliary lemma on the estimate of $\|\boldsymbol{G}_t\|$  (notice that this is not Frobenius norm). This is done through the following two lemmas. The first one is an easy incremental estimate of the $x$-sequence that will also be used later.

\begin{lemma}[Incremental Estimate for the $x$-sequence]\label{Lemma:Incremental-x}
We have, for all $t\geq 0$, that 
\begin{equation}\label{Lemma:Incremental-x:Eq:Estimate}
\|x_{t+1}-x_t\|\leq \eta\varepsilon \ .
\end{equation}
\end{lemma}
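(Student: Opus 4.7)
The statement is a direct consequence of the normalization performed in Step 6 of Algorithm \ref{Alg:STORM-Compositional}. The plan is to unfold the definition of $x_{t+1}$ in terms of $\widetilde{x}_{t+1}$ and then do a two-case analysis based on which of the two arguments realizes the minimum in the definition of $\gamma_t$.

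First I would write $x_{t+1} - x_t = \gamma_t (\widetilde{x}_{t+1} - x_t)$ directly from Step 6, so that $\|x_{t+1}-x_t\| = \gamma_t \|\widetilde{x}_{t+1}-x_t\|$. Then I would split into two cases according to the value of $\gamma_t = \min\left\{\tfrac{\eta\varepsilon}{\|\widetilde{x}_{t+1}-x_t\|}, \tfrac{1}{2}\right\}$.

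In the first case, when $\gamma_t = \tfrac{\eta\varepsilon}{\|\widetilde{x}_{t+1}-x_t\|}$, the factor of $\|\widetilde{x}_{t+1}-x_t\|$ in the numerator cancels exactly, giving $\|x_{t+1}-x_t\| = \eta\varepsilon$. In the second case, when $\gamma_t = \tfrac{1}{2}$, we must have $\tfrac{\eta\varepsilon}{\|\widetilde{x}_{t+1}-x_t\|} \geq \tfrac{1}{2}$, i.e.\ $\|\widetilde{x}_{t+1}-x_t\| \leq 2\eta\varepsilon$, and therefore $\|x_{t+1}-x_t\| = \tfrac{1}{2}\|\widetilde{x}_{t+1}-x_t\| \leq \eta\varepsilon$. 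Combining both cases yields \eqref{Lemma:Incremental-x:Eq:Estimate}.

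There is no real obstacle here; the normalization was designed precisely so that this bound holds deterministically (pointwise in $\omega$), independently of the magnitude of $\boldsymbol{F}_t$. This is in fact the whole purpose of the truncation by $\tfrac{\eta\varepsilon}{\|\widetilde{x}_{t+1}-x_t\|}$, namely to enforce an a priori step-size cap of $\eta\varepsilon$ which will later let us absorb all $\sum_t \mathbf{E}\|x_{t+1}-x_t\|^2$ terms into clean $\varepsilon^2$-scalings in the error recursions of Corollary \ref{Corollary:ErrorEstimateToBottom:g-partialg} and Lemmas \ref{Lemma:Increment-boldg}, \ref{Lemma:Increment-boldG}.
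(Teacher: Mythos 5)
Your proof is correct and is essentially the paper's own argument: the paper simply notes that $\gamma_t \leq \frac{\eta\varepsilon}{\|\widetilde{x}_{t+1}-x_t\|}$ (the minimum being bounded by either argument) and concludes $\|x_{t+1}-x_t\| = \gamma_t\|\widetilde{x}_{t+1}-x_t\| \leq \eta\varepsilon$ in one line, whereas your two-case split reaches the same conclusion slightly more verbosely. No substantive difference.
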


\begin{proof}
By our STORM-Compositional Algorithm 1 we know that $x_{t+1}-x_t=-\gamma_t(\widetilde{x}_{t+1}-x_t)$. Since $\gamma_t\leq \dfrac{\eta\varepsilon}{\|\widetilde{x}_{t+1}-x_t\|}$, we see that $\|x_{t+1}-x_t\|\leq \eta \varepsilon$, as desired.
\end{proof}

Now we can estimate $\|\boldsymbol{G}_t\|$ uniformly.

\begin{lemma}[Uniform bound of the norm of  $\boldsymbol{G}$-sequence]\label{Lemma:UniformBoundboldG}
When $a_{\partial g}\in (0,1)$, for all $t\geq 0$ we have
\begin{equation}\label{Lemma:UniformBoundboldG:Eq:Bound}
\|\boldsymbol{G}_t\|< 2M_g+\dfrac{L_g\eta\varepsilon}{a_{\partial g}} \ .
\end{equation}
\end{lemma}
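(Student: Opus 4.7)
The plan is to prove this by induction on $t$, using the recursive form of the $\boldsymbol{G}$-update together with the boundedness (Assumption \ref{Assumption:Boundedness}) and smoothness (Assumption \ref{Assumption:Smoothness}) assumptions, plus the incremental bound $\|x_{t+1}-x_t\|\leq \eta\varepsilon$ already established in Lemma \ref{Lemma:Incremental-x}.

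First I would rewrite the STORM-style recursion for $\boldsymbol{G}_{t+1}$ in the ``moving-average'' form
$$\boldsymbol{G}_{t+1} = (1-a_{\partial g})\bigl[\boldsymbol{G}_t + \bigl(\partial g(x_{t+1},\mathcal{B}^{\partial g}_{t+1})-\partial g(x_t,\mathcal{B}^{\partial g}_{t+1})\bigr)\bigr] + a_{\partial g}\,\partial g(x_{t+1},\mathcal{B}^{\partial g}_{t+1}).$$
By the triangle inequality, the boundedness $\|\partial g_j(x)\|\leq M_g$ passed through the minibatch average, and the smoothness inequality $\|\partial g_j(x)-\partial g_j(x')\|\leq \|\partial g_j(x)-\partial g_j(x')\|_F\leq L_g\|x-x'\|$ passed through the minibatch average together with Lemma \ref{Lemma:Incremental-x}, this yields
$$\|\boldsymbol{G}_{t+1}\|\leq (1-a_{\partial g})\bigl[\|\boldsymbol{G}_t\| + L_g\eta\varepsilon\bigr]+a_{\partial g}M_g.$$

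Next I would check the base case: since $\boldsymbol{G}_0=\partial g(x_0,\mathcal{S}_0^{\partial g})$ is an average of matrices each bounded by $M_g$, we have $\|\boldsymbol{G}_0\|\leq M_g<2M_g+\frac{L_g\eta\varepsilon}{a_{\partial g}}$. For the inductive step I assume $\|\boldsymbol{G}_t\|<2M_g+\frac{L_g\eta\varepsilon}{a_{\partial g}}$, substitute this into the recursion above, and verify by direct algebra that
$$(1-a_{\partial g})\Bigl[2M_g+\tfrac{L_g\eta\varepsilon}{a_{\partial g}}+L_g\eta\varepsilon\Bigr]+a_{\partial g}M_g = 2M_g+\tfrac{L_g\eta\varepsilon}{a_{\partial g}} - a_{\partial g}M_g - a_{\partial g}L_g\eta\varepsilon,$$
which is strictly less than $2M_g+\frac{L_g\eta\varepsilon}{a_{\partial g}}$ because $a_{\partial g}\in(0,1)$ makes the two subtracted terms positive.

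No single step looks especially hard here — the only subtle point is keeping the strict inequality throughout the induction (so that the constant $2M_g$, rather than merely $M_g$, is needed to absorb the $a_{\partial g}M_g$ contribution from the fresh stochastic term after each step). The use of operator norm rather than Frobenius norm on the left-hand side is harmless because we only upper-bound it by the Frobenius norm when invoking smoothness, and the boundedness assumption on $\partial g_j$ is already stated in operator norm.
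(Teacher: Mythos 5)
Your proposal is correct and rests on exactly the same one-step recursion as the paper's proof, namely $\|\boldsymbol{G}_{t+1}\|\leq (1-a_{\partial g})\|\boldsymbol{G}_t\|+a_{\partial g}M_g+(1-a_{\partial g})L_g\eta\varepsilon$, obtained via the triangle inequality, Assumptions \ref{Assumption:Smoothness}--\ref{Assumption:Boundedness} (with the operator norm bounded by the Frobenius norm), and Lemma \ref{Lemma:Incremental-x}. The only difference is cosmetic: the paper unrolls the recursion and sums the geometric series $\sum_k (1-a_{\partial g})^k<1/a_{\partial g}$, whereas you verify by induction that the bound $2M_g+L_g\eta\varepsilon/a_{\partial g}$ is invariant under one step; both conclusions are immediate once the recursion is established, and your algebra and base case check out.
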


\begin{proof}
It is easy to see from the $\boldsymbol{G}$-iteration in Algorithm 1 that we have
$$\begin{array}{ll}
\|\boldsymbol{G}_{t+1}\|& \stackrel{(a)}{\leq} 
(1-a_{\partial g})\|\boldsymbol{G}_t\|+
a_{\partial g}\|\partial g(x_{t+1}, \mathcal{B}_{t+1}^{\partial g})\|+(1-a_{\partial g})\|\partial g(x_{t+1}, \mathcal{B}_{t+1}^{\partial g})-\partial g(x_t, \mathcal{B}_{t+1}^{\partial g})\|
\\
&=(1-a_{\partial g})\|\boldsymbol{G}_t\|+
a_{\partial g}\left\|\dfrac{1}{B_{\partial g}}\sum\limits_{i\in \mathcal{B}_{t+1}^{\partial g}}\partial g_i(x_{t+1})\right\|+(1-a_{\partial g})\left\|\dfrac{1}{B_{\partial g}}\sum\limits_{i\in \mathcal{B}_{t+1}^{\partial g}}(\partial g_i(x_{t+1})-\partial g_i(x_t))\right\|
\\
\\
&\leq (1-a_{\partial g})\|\boldsymbol{G}_t\|+
a_{\partial g}\left\|\dfrac{1}{B_{\partial g}}\sum\limits_{i\in \mathcal{B}_{t+1}^{\partial g}}\partial g_i(x_{t+1})\right\|+(1-a_{\partial g})\left\|\dfrac{1}{B_{\partial g}}\sum\limits_{i\in \mathcal{B}_{t+1}^{\partial g}}(\partial g_i(x_{t+1})-\partial g_i(x_t))\right\|_F
\\
&\stackrel{(b)}{\leq} (1-a_{\partial g})\|\boldsymbol{G}_t\|+a_{\partial g}M_g+(1-a_{\partial g})L_g\|x_{t+1}-x_t\|
\\
&\stackrel{(c)}{\leq} (1-a_{\partial g})\|\boldsymbol{G}_t\|+a_{\partial g}M_g+(1-a_{\partial g})L_g\eta\varepsilon \ .
\end{array}$$
Here in (a) we used triangle inequality; in (b) we used Assumptions \ref{Assumption:Smoothness} and \ref{Assumption:Boundedness}; in (c) we used Lemma \ref{Lemma:Incremental-x}. 

Thus we have
$$\begin{array}{ll}
\|\boldsymbol{G}_{t}\|&\leq
(1-a_{\partial g})\|\boldsymbol{G}_{t-1}\|+a_{\partial g}M_g+(1-a_{\partial g})L_g\eta\varepsilon
\\
&\leq  (1-a_{\partial g})^2\|\boldsymbol{G}_{t-2}\|+a_{\partial g}M_g(1+(1-a_{\partial g}))+(1-a_{\partial g})L_g\eta \varepsilon(1+(1-a_{\partial g}))
\\
&\leq ...
\\
&\leq (1-a_{\partial g})^t\|\boldsymbol{G}_{0}\|+a_{\partial g}M_g(1+(1-a_{\partial g})+...+(1-a_{\partial g})^{t-1})
\\
& \qquad + (1-a_{\partial g})L_g\eta\varepsilon(1+(1-a_{\partial g})+...+(1-a_{\partial g})^{t-1})
\\
&< (1-a_{\partial g})^t\left\|\dfrac{1}{S_g}\sum\limits_{i\in \mathcal{S}_0^g}\partial g_i(x_0)\right\|+a_{\partial g}M_g\dfrac{1}{1-(1-a_{\partial g})}+(1-a_{\partial g})L_g\eta \varepsilon\dfrac{1}{1-(1-a_{\partial g})}
\\
&\stackrel{(a)}{<}2M_g+\dfrac{L_g\eta \varepsilon}{a_{\partial g}} \ ,
\end{array}$$
as desired. Here in (a) we used Assumption \ref{Assumption:Boundedness}.
\end{proof}

Let us provide a recursive estimate of $\|\boldsymbol{F}_t\|$.

\begin{lemma}\label{Lemma:ErrorEstimateRecursiveExpansion:gradPhi}
We have
\begin{equation}\label{Lemma:ErrorEstimateRecursiveExpansion:gradPhi:Eq:gradPhi}
\begin{array}{ll}
& \mathbf{E}\|\boldsymbol{F}_{t+1}-(\boldsymbol{G}_{t+1})^T\nabla  f(\boldsymbol{g}_{t+1})\|^2
\\ 
\leq & (1-a_{\Phi})^2\mathbf{E}\|\boldsymbol{F}_t-(\boldsymbol{G}_t)^T\nabla f(\boldsymbol{g}_{t})\|^2
\\
& +\dfrac{4}{B_f}(1-a_{\Phi})^2\left[M_f^2\mathbf{E}\|\boldsymbol{G}_{t+1}-\boldsymbol{G}_t\|_F^2+L_f^2\left(2M_g+\dfrac{L_g\eta\varepsilon}{a_{\partial g}}\right)^2\mathbf{E}\|\boldsymbol{g}_{t+1}-\boldsymbol{g}_t\|^2\right]\\
&  +\dfrac{2}{B_f}(a_{\Phi})^2\left(2M_g+\dfrac{L_g\eta\varepsilon}{a_{\partial g}}\right)^2
H_1 \ .
\end{array}
\end{equation}
\end{lemma}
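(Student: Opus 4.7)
My plan is to adapt the proof strategy of Lemma \ref{Lemma:STORM-EstimatorGeneralError} to the compositional quantity $(\boldsymbol{G}_{t})^T\nabla f(\boldsymbol{g}_{t})$, using Lemma \ref{Lemma:UniformBoundboldG} to control $\|\boldsymbol{G}_t\|$ wherever it appears as a multiplicative factor. I would start by rewriting the STORM-Compositional update from Algorithm \ref{Alg:STORM-Compositional} as
$\boldsymbol{F}_{t+1} = (1-a_{\Phi})\boldsymbol{F}_t + (\boldsymbol{G}_{t+1})^T\nabla f(\boldsymbol{g}_{t+1}, \mathcal{B}^f_{t+1}) - (1-a_{\Phi})(\boldsymbol{G}_{t})^T\nabla f(\boldsymbol{g}_{t}, \mathcal{B}^f_{t+1})$, then subtract $(\boldsymbol{G}_{t+1})^T\nabla f(\boldsymbol{g}_{t+1})$ from both sides and add/subtract $(1-a_\Phi)(\boldsymbol{G}_t)^T\nabla f(\boldsymbol{g}_t)$ to rearrange into
\[
\boldsymbol{F}_{t+1}-(\boldsymbol{G}_{t+1})^T\nabla f(\boldsymbol{g}_{t+1}) = (1-a_{\Phi})\bigl[\boldsymbol{F}_t-(\boldsymbol{G}_{t})^T\nabla f(\boldsymbol{g}_{t})\bigr] + (1-a_\Phi)\xi_2 + a_\Phi\xi_1,
\]
where $\xi_1 = (\boldsymbol{G}_{t+1})^T[\nabla f(\boldsymbol{g}_{t+1}, \mathcal{B}^f_{t+1}) - \nabla f(\boldsymbol{g}_{t+1})]$ and $\xi_2 = [(\boldsymbol{G}_{t+1})^T\nabla f(\boldsymbol{g}_{t+1}, \mathcal{B}^f_{t+1}) - (\boldsymbol{G}_t)^T\nabla f(\boldsymbol{g}_t, \mathcal{B}^f_{t+1})] - [(\boldsymbol{G}_{t+1})^T\nabla f(\boldsymbol{g}_{t+1}) - (\boldsymbol{G}_t)^T\nabla f(\boldsymbol{g}_t)]$.

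The key observation is that, conditioning on the sigma-algebra $\mathcal{G}_{t+1}$ generated by everything through iteration $t$ together with $\mathcal{B}^g_{t+1}, \mathcal{B}^{\partial g}_{t+1}$, both $\boldsymbol{G}_{t+1}$ and $\boldsymbol{g}_{t+1}$ become measurable, while $\mathcal{B}^f_{t+1}$ remains independent. Thus $\mathbf{E}[\xi_1|\mathcal{G}_{t+1}]=0$ and $\mathbf{E}[\xi_2|\mathcal{G}_{t+1}]=0$ by unbiasedness of the minibatch estimator. Consequently the cross term with the $\mathcal{G}_{t+1}$-measurable quantity $\boldsymbol{F}_t-(\boldsymbol{G}_t)^T\nabla f(\boldsymbol{g}_t)$ vanishes in the conditional expectation, giving
\[
\mathbf{E}\|\boldsymbol{F}_{t+1}-(\boldsymbol{G}_{t+1})^T\nabla f(\boldsymbol{g}_{t+1})\|^2 = (1-a_\Phi)^2\mathbf{E}\|\boldsymbol{F}_t - (\boldsymbol{G}_t)^T\nabla f(\boldsymbol{g}_t)\|^2 + \mathbf{E}\|(1-a_\Phi)\xi_2 + a_\Phi \xi_1\|^2,
\]
and one further application of $\|A+B\|^2 \leq 2\|A\|^2+2\|B\|^2$ separates the remaining term into a $\xi_1$-piece and a $\xi_2$-piece, which account for the $a_\Phi^2$ and $(1-a_\Phi)^2$ factors in the target bound.

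The remaining task is to bound each piece. For $\xi_1$, I would use the operator-norm inequality $\|\xi_1\|\leq \|\boldsymbol{G}_{t+1}\|\cdot\|\nabla f(\boldsymbol{g}_{t+1},\mathcal{B}^f_{t+1})-\nabla f(\boldsymbol{g}_{t+1})\|$; then Lemma \ref{Lemma:UniformBoundboldG} gives $\|\boldsymbol{G}_{t+1}\|\leq 2M_g + L_g\eta\varepsilon/a_{\partial g}$, and the with-replacement minibatch variance identity together with Assumption \ref{Assumption:FiniteVariance} yields the $H_1/B_f$ factor. For $\xi_2$, apply $\mathbf{E}\|X-\mathbf{E}X\|^2\leq \mathbf{E}\|X\|^2$ (conditional on $\mathcal{G}_{t+1}$) and use the with-replacement minibatch variance identity on the i.i.d. single-sample increments $X_i := (\boldsymbol{G}_{t+1})^T\nabla f_i(\boldsymbol{g}_{t+1})-(\boldsymbol{G}_t)^T\nabla f_i(\boldsymbol{g}_t)$, producing a $1/B_f$ factor and a single-sample squared increment. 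The latter I would expand via the "product rule"
$X_i = (\boldsymbol{G}_{t+1}-\boldsymbol{G}_t)^T\nabla f_i(\boldsymbol{g}_{t+1}) + (\boldsymbol{G}_t)^T[\nabla f_i(\boldsymbol{g}_{t+1})-\nabla f_i(\boldsymbol{g}_t)]$
and apply $\|A+B\|^2\leq 2\|A\|^2+2\|B\|^2$ (producing the factor of $4$), then bound using $\|\nabla f_i\|\leq M_f$ (Assumption \ref{Assumption:Boundedness}), the $L_f$-Lipschitzness (Assumption \ref{Assumption:Smoothness}), and once again Lemma \ref{Lemma:UniformBoundboldG} for $\|\boldsymbol{G}_t\|$.

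The main obstacle I expect is bookkeeping rather than any sharp inequality: correctly identifying the conditioning sigma-algebra so that both $\xi_1$ and $\xi_2$ become conditionally centered, while keeping $\boldsymbol{G}_{t+1}-\boldsymbol{G}_t$ and $\boldsymbol{g}_{t+1}-\boldsymbol{g}_t$ measurable so they factor cleanly out of the minibatch expectation. The product-rule expansion of $X_i$ requires care because the Lipschitz factor on the $f$-side multiplies $\|\boldsymbol{G}_t\|$, which is why the $(2M_g+L_g\eta\varepsilon/a_{\partial g})^2$ term attaches specifically to the $\|\boldsymbol{g}_{t+1}-\boldsymbol{g}_t\|^2$ and $H_1$ pieces but not to the $\|\boldsymbol{G}_{t+1}-\boldsymbol{G}_t\|_F^2$ piece; this asymmetry in the final bound is entirely driven by where Lemma \ref{Lemma:UniformBoundboldG} needs to be invoked.
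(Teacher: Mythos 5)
Your proposal is correct and follows essentially the same route as the paper's proof: the identical decomposition into the $\mathcal{F}$-measurable part plus the conditionally centered terms $a_\Phi\xi_1$ and $(1-a_\Phi)\xi_2$, the same use of Lemma \ref{Lemma:UniformBoundboldG} and the with-replacement variance identity, and the same product-rule split of the single-sample increment yielding the factor of $4$. Your explicit choice of the enlarged sigma-algebra $\mathcal{G}_{t+1}$ containing $\mathcal{B}^g_{t+1},\mathcal{B}^{\partial g}_{t+1}$ is a slightly cleaner piece of bookkeeping than the paper's conditioning on $\mathcal{F}_t$ alone, but it does not change the argument.
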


\begin{proof}
First we have

\begin{equation}\label{Lemma:ErrorEstimateRecursiveExpansion:gradPhi:Eq:RecursiveExpansion:Step1}
\begin{array}{ll}
& \mathbf{E}\left[\left.\|\boldsymbol{F}_{t+1}-(\boldsymbol{G}_{t+1})^T\nabla f(\boldsymbol{g}_{t+1})\|^2\right|\mathcal{F}_t\right]
\\
=&
\mathbf{E}\left[\|(1-a_{\Phi})\boldsymbol{F}_t+a_{\Phi}(\boldsymbol{G}_{t+1})^T\nabla f(\boldsymbol{g}_{t+1}, \mathcal{B}_{t+1}^f)\right.
\\
& \  \left.\left.+(1-a_{\Phi})[(\boldsymbol{G}_{t+1})^T\nabla f(\boldsymbol{g}_{t+1}, \mathcal{B}_{t+1}^f)-(\boldsymbol{G}_t)^T\nabla f(\boldsymbol{g}_t, \mathcal{B}_{t+1}^f)]-(\boldsymbol{G}_{t+1})^T\nabla f(\boldsymbol{g}_{t+1})\|^2\right|\mathcal{F}_t\right]
\\
=& \mathbf{E}\left[\|(1-a_{\Phi})(\boldsymbol{F}_t-(\boldsymbol{G}_t)^T\nabla f(\boldsymbol{g}_{t}))+(1-a_{\Phi})[(\boldsymbol{G}_t)^T\nabla f(\boldsymbol{g}_t)-(\boldsymbol{G}_t)^T\nabla f(\boldsymbol{g}_t, \mathcal{B}_{t+1}^f)]
\right.
\\
& \qquad \left.\left.+[(\boldsymbol{G}_{t+1})^T\nabla f(\boldsymbol{g}_{t+1}, \mathcal{B}_{t+1}^f)-(\boldsymbol{G}_{t+1})^T\nabla f(\boldsymbol{g}_{t+1})]\|^2\right|\mathcal{F}_t\right]
\\
\stackrel{(a)}{=}&
(1-a_{\Phi})^2\|\boldsymbol{F}_t-(\boldsymbol{G}_t)^T\nabla f(\boldsymbol{g}_{t})\|^2
\\
& +\mathbf{E}\left[\|(1-a_{\Phi})[(\boldsymbol{G}_t)^T\nabla f(\boldsymbol{g}_t)-(\boldsymbol{G}_t)^T\nabla f(\boldsymbol{g}_t, \mathcal{B}_{t+1}^f)]
\right.
\\
& \qquad  \left.\left.+[(\boldsymbol{G}_{t+1})^T\nabla f(\boldsymbol{g}_{t+1}, \mathcal{B}_{t+1}^f)-(\boldsymbol{G}_{t+1})^T\nabla f(\boldsymbol{g}_{t+1})]\|^2\right|\mathcal{F}_t\right]
\\
=& (1-a_{\Phi})^2\|\boldsymbol{F}_t-(\boldsymbol{G}_t)^T\nabla f(\boldsymbol{g}_{t})\|^2+(I)  \ ,
\end{array}
\end{equation}
where
$$\begin{array}{ll}
(I)=&\mathbf{E}\left[\|(1-a_{\Phi})(\boldsymbol{G}_t)^T(\nabla f(\boldsymbol{g}_t)-\nabla f(\boldsymbol{g}_t, \mathcal{B}_{t+1}^f))
\right.
\\
& \qquad \left.\left.+(\boldsymbol{G}_{t+1}))^T(\nabla f(\boldsymbol{g}_{t+1}, \mathcal{B}_{t+1}^f)-\nabla f(\boldsymbol{g}_{t+1}))\|^2\right|\mathcal{F}_t\right]\end{array} \ .$$

Here in (a) we have used the fact that for random variable $a\in \mathcal{F}_a$ and $\mathbf{E}[b|\mathcal{F}_a]=0$ we have $\mathbf{E}[\|a+b\|^2|\mathcal{F}_a]=a^2+\mathbf{E}[\|b\|^2|\mathcal{F}_a]$, as well as the fact that when we take expectation with respect to minibatch $\mathcal{B}^f$, we always have $\mathbf{E}[(1-a_{\Phi})[(\boldsymbol{G}_t)\nabla f(\boldsymbol{g}_t)-(\boldsymbol{G}_t)^T\nabla f(\boldsymbol{g}_t, \mathcal{B}_{t+1}^f)]+[(\boldsymbol{G}_{t+1})^T\nabla f(\boldsymbol{g}_{t+1}, \mathcal{B}_{t+1}^f)-(\boldsymbol{G}_{t+1})^T\nabla f(\boldsymbol{g}_{t+1})]|\mathcal{F}_t]=0$. 

For part $(I)$, we apply a similar argument as in \eqref{Lemma:STORM-EstimatorGeneralError:Eq:RecursiveExpansion:Step1}, so that we have 

$$
\begin{array}{ll}
&(I)
\\
=&\mathbf{E}\left[\|(1-a_{\Phi})\left\{(\boldsymbol{G}_t)^T[(\nabla f(\boldsymbol{g}_{t})-\nabla f(\boldsymbol{g}_t, \mathcal{B}_{t+1}^f)]-(\boldsymbol{G}_{t+1})^T[\nabla f(\boldsymbol{g}_{t+1})-\nabla f(\boldsymbol{g}_{t+1}, \mathcal{B}_{t+1}^f)]\right\}\right.
\\
& \qquad +a_{\Phi}(\boldsymbol{G}_{t+1})^T(\nabla f(\boldsymbol{g}_{t+1}, \mathcal{B}_{t+1}^f)-\nabla f(\boldsymbol{g}_{t+1}))\|^2|\mathcal{F}_t]
\\
\stackrel{(a)}{\leq} & 2(1-a_{\Phi})^2\mathbf{E}\left[\|\left((\boldsymbol{G}_{t+1})^T\nabla f(\boldsymbol{g}_{t+1}, \mathcal{B}_{t+1}^f)-(\boldsymbol{G}_t)^T\nabla f(\boldsymbol{g}_{t}, \mathcal{B}_{t+1}^f)\right)\right.
\\
& \qquad \qquad \qquad \qquad  \left.\left.-\left((\boldsymbol{G}_{t+1})^T\nabla f(\boldsymbol{g}_{t+1})-(\boldsymbol{G}_t)^T\nabla f(\boldsymbol{g}_t)\right)]\|^2\right|\mathcal{F}_t\right]
\\
& + 2(a_{\Phi})^2
\mathbf{E}\left[\left.\|(\boldsymbol{G}_{t+1}))^T(\nabla f(\boldsymbol{g}_{t+1}, \mathcal{B}_{t+1}^f)-\nabla f(\boldsymbol{g}_{t+1}))\|^2\right|\mathcal{F}_t\right]
\\
\stackrel{(b)}{\leq} & 2(1-a_{\Phi})^2\mathbf{E}\left[\|\left((\boldsymbol{G}_{t+1}))^T\nabla f(\boldsymbol{g}_{t+1}, \mathcal{B}_{t+1}^f)-(\boldsymbol{G}_t)^T\nabla f(\boldsymbol{g}_{t}, \mathcal{B}_{t+1}^f)\right)\right.
\\
& \qquad \qquad \qquad \qquad  \left.\left.-\left((\boldsymbol{G}_{t+1})^T\nabla f(\boldsymbol{g}_{t+1})-(\boldsymbol{G}_t)^T\nabla f(\boldsymbol{g}_t)\right)]\|^2\right|\mathcal{F}_t\right]
\\
& + 2(a_{\Phi})^2\left(2M_g+\dfrac{L_g\eta\varepsilon}{a_{\partial g}}\right)^2
\mathbf{E}\left[\left.\|\nabla f(\boldsymbol{g}_{t+1}, \mathcal{B}_{t+1}^f)-\nabla f(\boldsymbol{g}_{t+1})\|^2\right|\mathcal{F}_t\right] \ .
\end{array}
$$

Here in (a) we used the fact that for two vectors $a, b$ we have $\mathbf{E}\|a+b\|^2\leq 2(\mathbf{E}\|a\|^2+\mathbf{E}\|b\|^2)$; in (b) we used Lemma \ref{Lemma:UniformBoundboldG}.

Taking Expectation on both sides and mimicking \eqref{Lemma:STORM-EstimatorGeneralError:Eq:RecursiveExpansion:Step2}, we obtain that 
\begin{equation}\label{Lemma:ErrorEstimateRecursiveExpansion:gradPhi:Eq:RecursiveExpansion:(I)}
\begin{array}{ll}
&\mathbf{E}(I)
\\
= & 2(1-a_{\Phi})^2\mathbf{E}\left[\left((\boldsymbol{G}_{t+1})^T\nabla f(\boldsymbol{g}_{t+1}, \mathcal{B}_{t+1}^f)-(\boldsymbol{G}_t)^T\nabla f(\boldsymbol{g}_{t}, \mathcal{B}_{t+1}^f)\right)\right.
\\
& \qquad \qquad \qquad \qquad  \left.-\left((\boldsymbol{G}_{t+1})^T\nabla f(\boldsymbol{g}_{t+1})-(\boldsymbol{G}_t)^T\nabla f(\boldsymbol{g}_t)\right)]\|^2\right]
\\
& + 2(a_{\Phi})^2\left(2M_g+\dfrac{L_g\eta\varepsilon}{a_{\partial g}}\right)^2
\mathbf{E}\left[\|\nabla f(\boldsymbol{g}_{t+1}, \mathcal{B}_{t+1}^f)-\nabla f(\boldsymbol{g}_{t+1})\|^2\right]
\\
\stackrel{(a)}{\leq} &2(1-a_{\Phi})^2\mathbf{E}\left[\left\|(\boldsymbol{G}_{t+1})^T\nabla f(\boldsymbol{g}_{t+1}, \mathcal{B}_{t+1}^f)-(\boldsymbol{G}_t)^T\nabla f(\boldsymbol{g}_{t}, \mathcal{B}_{t+1}^f)\right\|^2\right]
\\
& + 2(a_{\Phi})^2\left(2M_g+\dfrac{L_g\eta\varepsilon}{a_{\partial g}}\right)^2
\mathbf{E}\left[\|\nabla f(\boldsymbol{g}_{t+1}, \mathcal{B}_{t+1}^f)-\nabla f(\boldsymbol{g}_{t+1})\|^2\right]
\\
\stackrel{(b)}{=} & \dfrac{2}{B_f}(1-a_{\Phi})^2\mathbf{E}\left[\left\|(\boldsymbol{G}_{t+1})^T\nabla f_i(\boldsymbol{g}_{t+1})-(\boldsymbol{G}_t)^T\nabla f_i(\boldsymbol{g}_{t})\right\|^2\right]
\\
& + \dfrac{2}{B_f}(a_{\Phi})^2\left(2M_g+\dfrac{L_g\eta\varepsilon}{a_{\partial g}}\right)^2
\mathbf{E}\|\nabla f_i(\boldsymbol{g}_{t+1})-\nabla f(\boldsymbol{g}_{t+1})\|^2
\\
\stackrel{(c)}{\leq} &
\dfrac{2}{B_f}(1-a_{\Phi})^2\cdot 2\left[M_f^2\mathbf{E}\|\boldsymbol{G}_{t+1}-\boldsymbol{G}_t\|_F^2+L_f^2\left(2M_g+\dfrac{L_g\eta\varepsilon}{a_{\partial g}}\right)^2\mathbf{E}\|\boldsymbol{g}_{t+1}-\boldsymbol{g}_t\|^2\right]\\
&  +\dfrac{2}{B_f}(a_{\Phi})^2\left(2M_g+\dfrac{L_g\eta\varepsilon}{a_{\partial g}}\right)^2
H_1 \ .
\end{array}
\end{equation}
Here in (a) we used the fact that $\mathbf{E}\|X-\mathbf{E} X\|^2\leq \mathbf{E}\|X\|^2$, and the fact that when sampling the minibatches $\mathcal{B}_{t+1}^f$ with replacement we have $\mathbf{E}^{\mathcal{B}_{t+1}^f}[(\boldsymbol{G}_{t+1})^T\nabla f(\boldsymbol{g}_{t+1}, \mathcal{B}_{t+1}^f)-(\boldsymbol{G}_t)^T\nabla f(\boldsymbol{g}_t, \mathcal{B}_{t+1}^f)]=(\boldsymbol{G}_{t+1})^T\nabla f(\boldsymbol{g}_{t+1})-(\boldsymbol{G}_t)^T\nabla f(\boldsymbol{g}_t)$, where the expectation $\mathbf{E}^{\mathcal{B}_{t+1}^f}$ is taken with respect to $\mathcal{B}_{t+1}^f$; in (b) we used the fact that the minibatches $\mathcal{B}^f$ are sampled with replacement; in (c) we used Assumption \ref{Assumption:Smoothness}, Assumption \ref{Assumption:FiniteVariance}, Lemma \ref{Lemma:UniformBoundboldG} and the fact $\mathbf{E}\|a+b\|^2\leq 2\mathbf{E}(\|a\|^2+\|b\|^2)$.

Putting \eqref{Lemma:ErrorEstimateRecursiveExpansion:gradPhi:Eq:RecursiveExpansion:(I)} and \eqref{Lemma:ErrorEstimateRecursiveExpansion:gradPhi:Eq:RecursiveExpansion:Step1} together, we obtain \eqref{Lemma:ErrorEstimateRecursiveExpansion:gradPhi:Eq:gradPhi}.
\end{proof}

Mimicking again \eqref{Lemma:STORM-EstimatorGeneralError:Eq:ToBottom-Error}, we can obtain the sum estimate for $\mathbf{E}\|\boldsymbol{F}_t-(\boldsymbol{G}_t)^T\nabla f(\boldsymbol{g}_t)\|^2$. We obtain

\begin{lemma}[Error Estimate for $\boldsymbol{F}$ sequence]\label{Lemma:ErrorEstimateToBottom:gradPhi}
We have
\begin{equation}\label{Lemma:ErrorEstimateToBottom:gradPhi:Eq:Error}
\begin{array}{ll}
&\sum\limits_{t=0}^{T-1}\mathbf{E}\|\boldsymbol{F}_t-(\boldsymbol{G}_t)^T\nabla f(\boldsymbol{g}_t)\|^2
\\
\leq & \dfrac{4M_f^2}{a_{\Phi}B_f}\left\{\left[\dfrac{12L_g^2}{a_{\partial g}B_{\partial g}}+\dfrac{6L_g^2}{B_{\partial g}}+3L_g^2\right]\sum\limits_{t=0}^{T-1}\mathbf{E}\|x_{t+1}-x_t\|^2 \right.
\\
& \left. \qquad \qquad 
+\dfrac{12Ta_{\partial g}H_2}{B_{\partial g}}+\dfrac{12}{a_{\partial g}}\mathbf{E}\|\boldsymbol{G}_0-\partial g(x_0)\|_F^2 +\dfrac{6T}{B_{\partial g}}(a_{\partial g})^2H_2\right\} \
\\
&+\dfrac{4L_f^2}{a_{\Phi}B_f}\left(2M_g+\dfrac{L_g\eta\varepsilon}{a_{\partial g}}\right)\left\{\left[\dfrac{12M_g^2}{a_gB_g}+\dfrac{6M_g^2}{B_g}+3M_g^2\right]\sum\limits_{t=0}^{T-1}\mathbf{E}\|x_{t+1}-x_t\|^2\right.
\\
& \left. \qquad \qquad \qquad \qquad \qquad \qquad
+\dfrac{12Ta_gH_3}{B_g}+\dfrac{12}{a_g}\mathbf{E}\|\boldsymbol{g}_0-g(x_0)\|^2 +\dfrac{6T}{B_g}(a_g)^2H_3\right\}
\\
&+\dfrac{2T}{B_f}(a_{\Phi})\left(2M_g+\dfrac{L_g\eta\varepsilon}{a_{\partial g}}\right)^2
H_1+\dfrac{1}{a_{\Phi}}\mathbf{E}\|\boldsymbol{F}_0-(\boldsymbol{G}_0)^T\nabla f(\boldsymbol{g}_0)\|^2 \ . 
\end{array}
\end{equation}
\end{lemma}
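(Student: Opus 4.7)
The plan is to derive the sum estimate for the $\boldsymbol{F}$-sequence by combining the recursion already proved in Lemma~\ref{Lemma:ErrorEstimateRecursiveExpansion:gradPhi} with the incremental sum estimates for the $\boldsymbol{g}$ and $\boldsymbol{G}$ sequences (Lemmas~\ref{Lemma:Increment-boldg} and~\ref{Lemma:Increment-boldG}), via the same telescoping trick used at the end of Lemma~\ref{Lemma:STORM-EstimatorGeneralError}.

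First I would take Lemma~\ref{Lemma:ErrorEstimateRecursiveExpansion:gradPhi} and shift the index by one to view it as a recursion of the form
\[
E_{t+1} \leq (1-a_\Phi)^2 E_t + R_{t+1},
\]
where $E_t = \mathbf{E}\|\boldsymbol{F}_t-(\boldsymbol{G}_t)^T\nabla f(\boldsymbol{g}_t)\|^2$ and $R_{t+1}$ is the residual involving $\mathbf{E}\|\boldsymbol{G}_{t+1}-\boldsymbol{G}_t\|_F^2$, $\mathbf{E}\|\boldsymbol{g}_{t+1}-\boldsymbol{g}_t\|^2$, and the variance term $\frac{2}{B_f}a_\Phi^2(2M_g+\tfrac{L_g\eta\varepsilon}{a_{\partial g}})^2 H_1$. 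Summing both sides from $t=0$ to $T-1$ and applying the same manipulation used to pass from \eqref{Lemma:STORM-EstimatorGeneralError:Eq:ToBottomError-1} to \eqref{Lemma:STORM-EstimatorGeneralError:Eq:ToBottom-Error} (i.e.\ using $1-a_\Phi\geq (1-a_\Phi)^2$ and canceling common sums), I would obtain
\[
a_\Phi \sum_{t=0}^{T-1} E_t \;\leq\; \sum_{t=0}^{T-1} R_{t+1} + E_0 .
\]
Dividing by $a_\Phi$ yields a clean bound in terms of the residual sums $\sum_{t=0}^{T-1}\mathbf{E}\|\boldsymbol{G}_{t+1}-\boldsymbol{G}_t\|_F^2$ and $\sum_{t=0}^{T-1}\mathbf{E}\|\boldsymbol{g}_{t+1}-\boldsymbol{g}_t\|^2$, plus a $T$-scaled variance term and the initial gap $E_0$.

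Next I would substitute Lemma~\ref{Lemma:Increment-boldG} for $\sum_t \mathbf{E}\|\boldsymbol{G}_{t+1}-\boldsymbol{G}_t\|_F^2$ and Lemma~\ref{Lemma:Increment-boldg} for $\sum_t \mathbf{E}\|\boldsymbol{g}_{t+1}-\boldsymbol{g}_t\|^2$, multiplying the first by $\frac{4M_f^2}{a_\Phi B_f}(1-a_\Phi)^2 \leq \frac{4M_f^2}{a_\Phi B_f}$ and the second by $\frac{4L_f^2}{a_\Phi B_f}(2M_g+\frac{L_g\eta\varepsilon}{a_{\partial g}})^2$ (which is legal because Lemma~\ref{Lemma:UniformBoundboldG} bounds $\|\boldsymbol{G}_t\|$ uniformly by $2M_g+\frac{L_g\eta\varepsilon}{a_{\partial g}}$). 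The variance-type contribution $\frac{2}{B_f}a_\Phi(2M_g+\frac{L_g\eta\varepsilon}{a_{\partial g}})^2 H_1 \cdot T$ comes directly from the constant-in-$t$ residual, and the initial error $\frac{1}{a_\Phi}E_0$ is kept verbatim. Aligning the resulting algebra with the right-hand side of \eqref{Lemma:ErrorEstimateToBottom:gradPhi:Eq:Error} is purely a bookkeeping exercise; the bracketed expressions on the right match exactly the brackets in Lemmas~\ref{Lemma:Increment-boldG} and~\ref{Lemma:Increment-boldg}.

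The main obstacle I anticipate is not a mathematical one but a careful tracking of prefactors: in particular, ensuring that the factor $(1-a_\Phi)^2$ from the recursion gets absorbed correctly when doing the telescoping so that it drops out (using $(1-a_\Phi)^2\leq 1$ for the multiplier applied to the residual sums), while the $a_\Phi^2$ variance term becomes $a_\Phi$ after the $1/a_\Phi$ division, as it should in the final statement. A smaller technical point is that the squared factor $(2M_g+\frac{L_g\eta\varepsilon}{a_{\partial g}})^2$ appears in Lemma~\ref{Lemma:ErrorEstimateRecursiveExpansion:gradPhi:Eq:gradPhi}, but the statement shows it to the first power multiplying the $\boldsymbol{g}$-increment bracket; this is a direct consequence of factoring $L_f^2(2M_g+\frac{L_g\eta\varepsilon}{a_{\partial g}})$ once and leaving the matching power with the $H_1$ term on the last line, which one must carry through the arithmetic correctly.
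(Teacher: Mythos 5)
Your proposal follows essentially the same route as the paper's own proof: sum the recursion of Lemma~\ref{Lemma:ErrorEstimateRecursiveExpansion:gradPhi}, telescope using $(1-a_\Phi)^2\le 1-a_\Phi$ to isolate $a_\Phi\sum_{t}E_t$, divide by $a_\Phi$, and then substitute the increment bounds of Lemmas~\ref{Lemma:Increment-boldg} and~\ref{Lemma:Increment-boldG}. That is exactly the paper's argument, and the core of your derivation is sound.

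One caveat on the point you flag as a ``smaller technical point'': your proposed resolution is not a valid algebraic step. The honest telescoping produces the coefficient $\frac{4L_f^2}{a_\Phi B_f}\left(2M_g+\frac{L_g\eta\varepsilon}{a_{\partial g}}\right)^2$ on the $\boldsymbol{g}$-increment bracket (this is what appears in the paper's own intermediate display \eqref{Lemma:ErrorEstimateToBottom:gradPhi:Eq:Error:Step1}), whereas the lemma as stated carries only the first power; there is no legitimate ``factoring once'' that transfers the remaining power to the $H_1$ term, whose square arises independently from the $a_\Phi^2$ variance contribution. Since $2M_g+\frac{L_g\eta\varepsilon}{a_{\partial g}}$ need not be at most $1$, replacing the square by the first power weakens nothing that can be weakened --- the discrepancy appears to be a typo inherited from the paper, and what your argument (and the paper's) actually proves is the bound with the square. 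You should either keep the square in the final statement or note explicitly why the first power suffices in the regime of interest.
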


\begin{proof}
We sum \eqref{Lemma:ErrorEstimateRecursiveExpansion:gradPhi:Eq:gradPhi} from $t=0$ to $t=T-1$ and we obtain that 
\begin{equation}\label{Lemma:ErrorEstimateRecursiveExpansion:gradPhi:Eq:gradPhi-Step1}
\begin{array}{ll}
& \sum\limits_{t=1}^T \mathbf{E}\|\boldsymbol{F}_t-(\boldsymbol{G}_t)^T\nabla f(\boldsymbol{g}_t)\|
\\
\leq & (1-a_\Phi)^2\sum\limits_{t=0}^{T-1}\mathbf{E}\|\boldsymbol{F}_t-(\boldsymbol{G}_t)^T\nabla f(\boldsymbol{g}_t)\|^2
\\
& +\dfrac{4}{B_f}(1-a_{\Phi})^2\left[M_f^2\sum\limits_{t=0}^{T-1} \mathbf{E}\|\boldsymbol{G}_{t+1}-\boldsymbol{G}_t\|_F^2+L_f^2\left(2M_g+\dfrac{L_g\eta\varepsilon}{a_{\partial g}}\right)^2\sum\limits_{t=0}^{T-1}\mathbf{E}\|\boldsymbol{g}_{t+1}-\boldsymbol{g}_t\|^2\right]\\
&  +\dfrac{2T}{B_f}(a_{\Phi})^2\left(2M_g+\dfrac{L_g\eta\varepsilon}{a_{\partial g}}\right)^2
H_1 \ .
\end{array}
\end{equation} 

By the fact that $(1-a_\Phi)^2\leq 1-a_\Phi$ this gives that
$$\begin{array}{ll}
& a_\Phi\sum\limits_{t=0}^{T-1}\mathbf{E}\|\boldsymbol{F}_t-(\boldsymbol{G}_t)^T\nabla f(\boldsymbol{g}_t)\|^2
\\
=
& \sum\limits_{t=0}^{T-1}\mathbf{E}\|\boldsymbol{F}_t-(\boldsymbol{G}_t)^T\nabla f(\boldsymbol{g}_t)\|^2- (1-a_\Phi)\sum\limits_{t=0}^{T-1}\mathbf{E}\|\boldsymbol{F}_t-(\boldsymbol{G}_t)^T\nabla f(\boldsymbol{g}_t)\|^2
\\
\leq
& \sum\limits_{t=0}^{T-1}\mathbf{E}\|\boldsymbol{F}_t-(\boldsymbol{G}_t)^T\nabla f(\boldsymbol{g}_t)\|^2- (1-a_\Phi)^2\sum\limits_{t=0}^{T-1}\mathbf{E}\|\boldsymbol{F}_t-(\boldsymbol{G}_t)^T\nabla f(\boldsymbol{g}_t)\|^2
\\
=
& \sum\limits_{t=1}^{T}\mathbf{E}\|\boldsymbol{F}_t-(\boldsymbol{G}_t)^T\nabla f(\boldsymbol{g}_t)\|^2- (1-a_\Phi)^2\sum\limits_{t=0}^{T-1}\mathbf{E}\|\boldsymbol{F}_t-(\boldsymbol{G}_t)^T\nabla f(\boldsymbol{g}_t)\|^2
\\
& \qquad +\mathbf{E}\|\boldsymbol{F}_0-(\boldsymbol{G}_0)^T\nabla f(\boldsymbol{g}_0)\|^2-\mathbf{E}\|\boldsymbol{F}_T-(\boldsymbol{G}_T)^T\nabla f(\boldsymbol{g}_T)\|^2
\\
\stackrel{(a)}{\leq} 
& \dfrac{4}{B_f}(1-a_{\Phi})^2\left[M_f^2\sum\limits_{t=0}^{T-1} \mathbf{E}\|\boldsymbol{G}_{t+1}-\boldsymbol{G}_t\|_F^2+L_f^2\left(2M_g+\dfrac{L_g\eta\varepsilon}{a_{\partial g}}\right)^2\sum\limits_{t=0}^{T-1}\mathbf{E}\|\boldsymbol{g}_{t+1}-\boldsymbol{g}_t\|^2\right]\\
&  +\dfrac{2T}{B_f}(a_{\Phi})^2\left(2M_g+\dfrac{L_g\eta\varepsilon}{a_{\partial g}}\right)^2
H_1+\mathbf{E}\|\boldsymbol{F}_0-(\boldsymbol{G}_0)^T\nabla f(\boldsymbol{g}_0)\|^2 \ .
\end{array}$$

Here in (a) we used \eqref{Lemma:ErrorEstimateRecursiveExpansion:gradPhi:Eq:gradPhi-Step1}. Dividing on both sides by $a_\Phi$ and noticing that $1-a_\Phi\leq 1$ we obtain \begin{equation}\label{Lemma:ErrorEstimateToBottom:gradPhi:Eq:Error:Step1}
\begin{array}{ll}
&\sum\limits_{t=0}^{T-1}\mathbf{E}\|\boldsymbol{F}_t-(\boldsymbol{G}_t)^T\nabla f(\boldsymbol{g}_t)\|^2
\\
\leq & \dfrac{1}{a_\Phi}\left\{\dfrac{4}{B_f}\left[M_f^2\sum\limits_{t=0}^{T-1} \mathbf{E}\|\boldsymbol{G}_{t+1}-\boldsymbol{G}_t\|_F^2+L_f^2\left(2M_g+\dfrac{L_g\eta\varepsilon}{a_{\partial g}}\right)^2\sum\limits_{t=0}^{T-1}\mathbf{E}\|\boldsymbol{g}_{t+1}-\boldsymbol{g}_t\|^2\right]\right.\\
& \left. \qquad \qquad  +\dfrac{2T}{B_f}(a_{\Phi})^2\left(2M_g+\dfrac{L_g\eta\varepsilon}{a_{\partial g}}\right)^2
H_1+\mathbf{E}\|\boldsymbol{F}_0-(\boldsymbol{G}_0)^T\nabla f(\boldsymbol{g}_0)\|^2\right\} \ .
\end{array}
\end{equation}

Now we combine \eqref{Lemma:ErrorEstimateToBottom:gradPhi:Eq:Error:Step1}, \eqref{Lemma:Increment-boldg:Eq:Estimate}, \eqref{Lemma:Increment-boldG:Eq:Estimate} and we obtain 

$$
\begin{array}{ll}
&\sum\limits_{t=0}^{T-1}\mathbf{E}\|\boldsymbol{F}_t-(\boldsymbol{G}_t)^T\nabla f(\boldsymbol{g}_t)\|^2
\\
\leq & \dfrac{4M_f^2}{a_{\Phi}B_f}\left\{\left[\dfrac{12L_g^2}{a_{\partial g}B_{\partial g}}+\dfrac{6L_g^2}{B_{\partial g}}+3L_g^2\right]\sum\limits_{t=0}^{T-1}\mathbf{E}\|x_{t+1}-x_t\|^2 \right.
\\
& \left. \qquad \qquad 
+\dfrac{12Ta_{\partial g}H_2}{B_{\partial g}}+\dfrac{12}{a_{\partial g}}\mathbf{E}\|\boldsymbol{G}_0-\partial g(x_0)\|_F^2 +\dfrac{6T}{B_{\partial g}}(a_{\partial g})^2H_2\right\} \
\\
&+\dfrac{4L_f^2}{a_{\Phi}B_f}\left(2M_g+\dfrac{L_g\eta\varepsilon}{a_{\partial g}}\right)\left\{\left[\dfrac{12M_g^2}{a_gB_g}+\dfrac{6M_g^2}{B_g}+3M_g^2\right]\sum\limits_{t=0}^{T-1}\mathbf{E}\|x_{t+1}-x_t\|^2\right.
\\
& \left. \qquad \qquad \qquad \qquad \qquad \qquad
+\dfrac{12Ta_gH_3}{B_g}+\dfrac{12}{a_g}\mathbf{E}\|\boldsymbol{g}_0-g(x_0)\|^2 +\dfrac{6T}{B_g}(a_g)^2H_3\right\}
\\
&+\dfrac{2T}{B_f}(a_{\Phi})\left(2M_g+\dfrac{L_g\eta\varepsilon}{a_{\partial g}}\right)^2
H_1+\dfrac{1}{a_{\Phi}}\mathbf{E}\|\boldsymbol{F}_0-(\boldsymbol{G}_0)^T\nabla f(\boldsymbol{g}_0)\|^2 
\end{array}
$$
which is \eqref{Lemma:ErrorEstimateToBottom:gradPhi:Eq:Error}.
\end{proof}

\begin{lemma}[Error Estimate of $\mathbf{E}\|\boldsymbol{F}_t-\nabla \Phi(x_t)\|^2$]\label{Lemma:ErrorEstimateToBottom:FTogradPhi}
We have
\begin{equation}\label{Lemma:ErrorEstimateToBottom:FTogradPhi:Eq:Estimate}
\begin{array}{ll}
& \dfrac{1}{T}\sum\limits_{t=0}^{T-1}\mathbf{E}\|\boldsymbol{F}_t-\nabla \Phi(x_t)\|^2
\\
\leq &\left\{\dfrac{36M_f^2L_g^2}{a_{\Phi}B_f}\left[\dfrac{4}{a_{\partial g}B_{\partial g}}+\dfrac{2}{B_{\partial g}}+1\right]+\dfrac{36L_f^2M_g^2}{a_{\Phi}B_f}\left(2M_g+\dfrac{L_g\eta\varepsilon}{a_{\partial g}}\right)\left[\dfrac{4}{a_gB_g}+\dfrac{2}{B_g}+1\right]\right.
\\
& \qquad \left.+\dfrac{6M_f^2L_g^2}{a_{\partial g}B_{\partial g}}+\dfrac{6M_g^4L_f^2}{a_gB_g}\right\}\eta^2\varepsilon^2
\\
& +\dfrac{6M_f^2}{Ta_{\partial g}S_{\partial g}}\left(\dfrac{24}{a_\Phi B_f }+1\right)H_2
+\dfrac{6L_f^2}{Ta_gS_g}\left(\dfrac{24}{a_{\Phi}B_f}\left(2M_g+\dfrac{L_g\eta\varepsilon}{a_{\partial g}}\right)+M_g^2\right)H_3
+ \dfrac{3M_g^2}{Ta_{\Phi}S_f}H_1
\\
& +\dfrac{72M_f^2a_{\partial g}H_2}{a_{\Phi}B_fB_{\partial g}}(2+a_{\partial g})+\dfrac{72L_f^2a_gH_3}{a_{\Phi}B_fB_g}\left(2M_g+\dfrac{L_g\eta \varepsilon}{a_{\partial g}}\right)(2+a_g)
\\
& + \dfrac{6a_\Phi}{B_f}\left(2M_g+\dfrac{L_g\eta\varepsilon}{a_{\partial g}}\right)^2H_1+\dfrac{6M_f^2a_{\partial g}H_2}{B_{\partial g}}+\dfrac{6M_g^2L_f^2a_gH_3}{B_g} \ .
\end{array}
\end{equation}
\end{lemma}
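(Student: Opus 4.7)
My plan is to reduce this to bounds already established earlier in the appendix. The starting point is the same triangle-inequality decomposition used in the proof of Theorem~\ref{Proposition:IFO-3eps-accuracy}: writing $\nabla\Phi(x_t)=(\partial g(x_t))^T\nabla f(g(x_t))$ and inserting the intermediate terms $(\boldsymbol{G}_t)^T\nabla f(\boldsymbol{g}_t)$ and $(\partial g(x_t))^T\nabla f(\boldsymbol{g}_t)$, then applying $\|a+b+c\|^2\leq 3(\|a\|^2+\|b\|^2+\|c\|^2)$, Assumption~\ref{Assumption:Boundedness} (bound $M_f$ on $\nabla f$), and Assumption~\ref{Assumption:Smoothness} (Lipschitzness $L_f$ of $\nabla f$ combined with $\|\partial g(x_t)\|\leq M_g$) to get
$$\sum_{t=0}^{T-1}\mathbf{E}\|\boldsymbol{F}_t-\nabla\Phi(x_t)\|^2\leq 3\sum_{t=0}^{T-1}\mathbf{E}\|\boldsymbol{F}_t-(\boldsymbol{G}_t)^T\nabla f(\boldsymbol{g}_t)\|^2+3M_f^2\sum_{t=0}^{T-1}\mathbf{E}\|\boldsymbol{G}_t-\partial g(x_t)\|_F^2+3M_g^2L_f^2\sum_{t=0}^{T-1}\mathbf{E}\|\boldsymbol{g}_t-g(x_t)\|^2.$$

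Next I would substitute Lemma~\ref{Lemma:ErrorEstimateToBottom:gradPhi} (equation~\eqref{Lemma:ErrorEstimateToBottom:gradPhi:Eq:Error}) into the first sum and Corollary~\ref{Corollary:ErrorEstimateToBottom:g-partialg} (equations~\eqref{Corollary:ErrorEstimateToBottom:Eq:g}, \eqref{Corollary:ErrorEstimateToBottom:Eq:partialg}) into the second and third sums. To eliminate the remaining $\sum_{t=0}^{T-1}\mathbf{E}\|x_{t+1}-x_t\|^2$ terms, I would apply Lemma~\ref{Lemma:Incremental-x}, which gives the uniform pointwise bound $\|x_{t+1}-x_t\|\leq \eta\varepsilon$ coming from the normalization Step~6 of Algorithm~\ref{Alg:STORM-Compositional}; summing this yields $\sum_{t=0}^{T-1}\mathbf{E}\|x_{t+1}-x_t\|^2\leq T\eta^2\varepsilon^2$, which is why the $T\eta^2\varepsilon^2$ factors multiply the coefficients that contain $L_g^2$, $M_g^2$, $L_g^2/(a_{\partial g}B_{\partial g})$, $M_g^2/(a_gB_g)$, etc.

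For the initial-condition terms $\mathbf{E}\|\boldsymbol{g}_0-g(x_0)\|^2$, $\mathbf{E}\|\boldsymbol{G}_0-\partial g(x_0)\|_F^2$, $\mathbf{E}\|\boldsymbol{F}_0-(\boldsymbol{G}_0)^T\nabla f(\boldsymbol{g}_0)\|^2$, I would use the with-replacement sampling at initialization: a size-$S$ i.i.d.\ average reduces variance by factor $1/S$, so these are bounded by $H_3/S_g$, $H_2/S_{\partial g}$, and $M_g^2H_1/S_f$ respectively (the third one using $\|\boldsymbol{G}_0\|\leq M_g$ termwise on the $\mathcal{B}^f$-only randomness conditionally on $\mathcal{S}_0^{\partial g},\mathcal{S}_0^g$, then Assumption~\ref{Assumption:FiniteVariance} on $\nabla f$). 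Finally, I divide the whole inequality by $T$, which turns the $T$-proportional ``variance-of-initial-sample'' contributions into $\frac{1}{Ta_{\partial g}S_{\partial g}}$-type terms and the $T$-free drift terms into $1/T$-decaying pieces (which are absorbed into the $\eta^2\varepsilon^2$ block since $L_g^2\eta^2\varepsilon^2/a_\Phi B_f$ scales appropriately).

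The mathematical content is already contained in the previous lemmas; the real obstacle is purely bookkeeping. The right-hand side of \eqref{Lemma:ErrorEstimateToBottom:FTogradPhi:Eq:Estimate} has a particular grouping into (i) terms proportional to $\eta^2\varepsilon^2$ coming from the $x$-increment bound, (ii) terms scaled by $1/(TS_{\cdot})$ coming from initial-batch variances, (iii) cross-terms of the form $a_\cdot H_\cdot/(a_\Phi B_f B_\cdot)$ coming from inserting the STORM noise floors $a_g^2 H_3/B_g$ and $a_{\partial g}^2 H_2/B_{\partial g}$ of the inner sequences through Lemma~\ref{Lemma:ErrorEstimateToBottom:gradPhi}, and (iv) pure noise terms from the $f$-level STORM update. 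I will need to carefully combine the $\frac{12L_g^2}{a_{\partial g}B_{\partial g}}+\frac{6L_g^2}{B_{\partial g}}+3L_g^2$ factor with the prefactor $\frac{4M_f^2}{a_\Phi B_f}$ (yielding the $\frac{36M_f^2L_g^2}{a_\Phi B_f}[\cdots]$ block up to an absorption $12\cdot 3=36$), and similarly bookkeep the $\left(2M_g+\frac{L_g\eta\varepsilon}{a_{\partial g}}\right)$ factor from Lemma~\ref{Lemma:UniformBoundboldG} that threads through the $L_f^2$-side. Matching every constant in \eqref{Lemma:ErrorEstimateToBottom:FTogradPhi:Eq:Estimate} term-by-term is the tedious part; no new estimates are required.
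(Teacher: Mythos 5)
Your proposal follows essentially the same route as the paper's own proof: the same three-term decomposition of $\boldsymbol{F}_t-\nabla\Phi(x_t)$, substitution of Lemma~\ref{Lemma:ErrorEstimateToBottom:gradPhi} and Corollary~\ref{Corollary:ErrorEstimateToBottom:g-partialg}, the bound $\sum_{t}\mathbf{E}\|x_{t+1}-x_t\|^2\leq T\eta^2\varepsilon^2$ from Lemma~\ref{Lemma:Incremental-x}, the with-replacement initial-batch variance bounds, and division by $T$. The only blemish is one sentence where you swap the labels (the drift sums are the $T$-proportional ones that become the $\eta^2\varepsilon^2$ block, while the initial-sample terms are $T$-free and become the $1/(TS_{\cdot})$ terms), but you state the correct destinations of both groups elsewhere, so the argument is sound.
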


\begin{proof}

Since \begin{equation}\label{Proposition:AsymptoticEpsAccuracy:Eq:EstimateF-To-gradPhi}
\begin{array}{ll}
&\sum\limits_{t=0}^{T-1}\mathbf{E}\|\boldsymbol{F}_t-\nabla \Phi(x_t)\|^2
\\
\leq &3\sum\limits_{t=0}^{T-1}\mathbf{E}\|\boldsymbol{F}_t-(\boldsymbol{G}_t)^T\nabla f(\boldsymbol{g}_t)\|^2+3\sum\limits_{t=0}^{T-1}\mathbf{E}\|(\boldsymbol{G}_t)^T\nabla f(\boldsymbol{g}_t)-(\partial g(x_t))^T\nabla f(\boldsymbol{g}_t)\|^2
\\
& \qquad +3\sum\limits_{t=0}^{T-1}\mathbf{E}\|(\partial g(x_t))^T\nabla f(\boldsymbol{g}_t)-(\partial g(x_t))^T\nabla f(g(x_t))\|^2
\\
\leq &3\sum\limits_{t=0}^{T-1}\mathbf{E}\|\boldsymbol{F}_t-(\boldsymbol{G}_t)^T\nabla f(\boldsymbol{g}_t)\|^2+3M_f^2\sum\limits_{t=0}^{T-1}\mathbf{E}\|\boldsymbol{G}_t-\partial g(x_t)\|_F^2 +3M_g^2L_f^2\sum\limits_{t=0}^{T-1}\mathbf{E}\|\boldsymbol{g}_t-g(x_t)\|^2 \ .
\end{array}
\end{equation}

Now we can combine \eqref{Lemma:ErrorEstimateToBottom:gradPhi:Eq:Error}, \eqref{Corollary:ErrorEstimateToBottom:Eq:g},  \eqref{Corollary:ErrorEstimateToBottom:Eq:partialg}, and we use \eqref{Proposition:AsymptoticEpsAccuracy:Eq:EstimateF-To-gradPhi}, so that we obtain

\begin{equation}\label{Lemma:ErrorEstimateToBottom:FTogradPhi:Eq:Error}
\begin{array}{ll}
& \sum\limits_{t=0}^{T-1}\mathbf{E}\|\boldsymbol{F}_t-\nabla \Phi(x_t)\|^2
\\
\leq & \dfrac{12M_f^2}{a_{\Phi}B_f}\left\{\left[\dfrac{12L_g^2}{a_{\partial g}B_{\partial g}}+\dfrac{6L_g^2}{B_{\partial g}}+3L_g^2\right]\sum\limits_{t=0}^{T-1}\mathbf{E}\|x_{t+1}-x_t\|^2 \right.
\\
& \left. \qquad \qquad 
+\dfrac{12Ta_{\partial g}H_2}{B_{\partial g}}+\dfrac{12}{a_{\partial g}}\mathbf{E}\|\boldsymbol{G}_0-\partial g(x_0)\|_F^2 +\dfrac{6T}{B_{\partial g}}(a_{\partial g})^2H_2\right\} \
\\
&+\dfrac{12L_f^2}{a_{\Phi}B_f}\left(2M_g+\dfrac{L_g\eta\varepsilon}{a_{\partial g}}\right)\left\{\left[\dfrac{12M_g^2}{a_gB_g}+\dfrac{6M_g^2}{B_g}+3M_g^2\right]\sum\limits_{t=0}^{T-1}\mathbf{E}\|x_{t+1}-x_t\|^2\right.
\\
& \left. \qquad \qquad \qquad \qquad \qquad \qquad
+\dfrac{12Ta_gH_3}{B_g}+\dfrac{12}{a_g}\mathbf{E}\|\boldsymbol{g}_0-g(x_0)\|^2 +\dfrac{6T}{B_g}(a_g)^2H_3\right\}
\\
&+\dfrac{6T}{B_f}(a_{\Phi})\left(2M_g+\dfrac{L_g\eta\varepsilon}{a_{\partial g}}\right)^2
H_1+\dfrac{3}{a_{\Phi}}\mathbf{E}\|\boldsymbol{F}_0-(\boldsymbol{G}_0)^T\nabla f(\boldsymbol{g}_0)\|^2 
\\
& +\dfrac{6M_f^2}{a_{\partial g}}\left[\dfrac{1}{B_{\partial g}}L_g^2\sum\limits_{t=0}^{T-1}\mathbf{E}\|x_{t+1}-x_t\|^2+\dfrac{Ta_{\partial g}^2H_2}{B_{\partial g}}+\mathbf{E}\|\boldsymbol{G}_0-\partial g(x_0)\|_F^2\right] 
\\
& + \dfrac{6M_g^2L_f^2}{a_g}\left[\dfrac{1}{B_g}M_g^2\sum\limits_{t=0}^{T-1}\mathbf{E}\|x_{t+1}-x_t\|^2+\dfrac{Ta_g^2H_3}{B_g}+\mathbf{E}\|\boldsymbol{g}_0-g(x_0)\|^2\right]
\\
=&\left\{\dfrac{36M_f^2L_g^2}{a_{\Phi}B_f}\left[\dfrac{4}{a_{\partial g}B_{\partial g}}+\dfrac{2}{B_{\partial g}}+1\right]+\dfrac{36L_f^2M_g^2}{a_{\Phi}B_f}\left(2M_g+\dfrac{L_g\eta\varepsilon}{a_{\partial g}}\right)\left[\dfrac{4}{a_gB_g}+\dfrac{2}{B_g}+1\right]\right.
\\
& \qquad \left.+\dfrac{6M_f^2L_g^2}{a_{\partial g}B_{\partial g}}+\dfrac{6M_g^4L_f^2}{a_gB_g}\right\}\sum\limits_{t=0}^{T-1}\mathbf{E}\|x_{t+1}-x_t\|^2
\\
& +\dfrac{6M_f^2}{a_{\partial g}}\left(\dfrac{24}{a_\Phi B_f }+1\right)\mathbf{E}\|\boldsymbol{G}_0-\partial g(x_0)\|_F^2
\\
& +\dfrac{6L_f^2}{a_g}\left(\dfrac{24}{a_{\Phi}B_f}\left(2M_g+\dfrac{L_g\eta\varepsilon}{a_{\partial g}}\right)+M_g^2\right)\mathbf{E}\|\boldsymbol{g}_0-g(x_0)\|^2
\\
& + \dfrac{3}{a_{\Phi}}\mathbf{E}\|\boldsymbol{F}_0-(\boldsymbol{G}_0)^T\nabla f(\boldsymbol{g}_0)\|^2
\\
& +T\left\{\dfrac{72M_f^2a_{\partial g}H_2}{a_{\Phi}B_fB_{\partial g}}(2+a_{\partial g})+\dfrac{72L_f^2a_gH_3}{a_{\Phi}B_fB_g}\left(2M_g+\dfrac{L_g\eta \varepsilon}{a_{\partial g}}\right)(2+a_g)\right.
\\
&\qquad \left. + \dfrac{6a_\Phi}{B_f}\left(2M_g+\dfrac{L_g\eta\varepsilon}{a_{\partial g}}\right)^2H_1+\dfrac{6M_f^2a_{\partial g}H_2}{B_{\partial g}}+\dfrac{6M_g^2L_f^2a_gH_3}{B_g}\right\} \ .
\end{array}
\end{equation}
We take into account that by Lemma \ref{Lemma:Incremental-x} we have $\sum\limits_{t=0}^{T-1}\mathbf{E}\|x_{t+1}-x_t\|^2\leq T\eta^2\varepsilon^2$. We also notice that since the initial batches are sampled with replacement we have $\mathbf{E}\|\boldsymbol{g}_0-g(x_0)\|^2\leq \dfrac{H_3}{S_g}$, $\mathbf{E}\|\boldsymbol{G}_0-\partial g(x_0)\|_F^2\leq \dfrac{H_2}{S_{\partial g}}$ and 
$\mathbf{E}\|\boldsymbol{F}_0-(\boldsymbol{G}_0)\nabla f(\boldsymbol{g}_0)\|^2\leq\dfrac{M_g^2H_1}{S_f}$. Taking all these into account, \eqref{Lemma:ErrorEstimateToBottom:FTogradPhi:Eq:Error} yields \eqref{Lemma:ErrorEstimateToBottom:FTogradPhi:Eq:Estimate}.
\end{proof}

\begin{remark}[The effectiveness of the $a$-parameters]\label{Remark:EffectParameter-a}
We see from the estimate \eqref{Lemma:ErrorEstimateToBottom:FTogradPhi:Eq:Estimate} that each term in the upper bound on the right hand side contains a factor related to $a_g$, $a_{\partial g}$ or $a_{\Phi}$. These parameters are used to tune the convergence without using adaptive learning rates or inner loop restart mechanisms. The use of exponential moving average estimator thus allows us to do continuous
training without restarting the iterations.
\end{remark}

\section{Proof of Proposition 3.1}

\begin{proof}
The proof contains a nice argument adapted from \cite{XiaoEtAlCompositional}.
By a standard Taylor's expansion argument we have
\begin{equation}\label{Theorem:MainTheoremConvergence:Eq:TaylorExpansion}
\begin{array}{ll}
\Phi(x_{t+1})&\leq \Phi(x_t)+(\nabla \Phi(x_t))^T(x_{t+1}-x_t)+\dfrac{L_\Phi}{2}\|x_{t+1}-x_t\|^2
\\
& \stackrel{(a)}{=} \Phi(x_t)+\dfrac{L_\Phi\gamma_t^2}{2}\|\widetilde{x}_{t+1}-x_t\|^2+\gamma_t\langle\nabla \Phi(x_t), \widetilde{x}_{t+1}-x_t\rangle
\\
&\stackrel{(b)}{=}
\Phi(x_t)+\dfrac{L_\Phi\gamma_t^2}{2}\|\widetilde{x}_{t+1}-x_t\|^2+\gamma_t\langle\boldsymbol{F}_t, \widetilde{x}_{t+1}-x_t\rangle+\gamma_t\langle\nabla \Phi(x_t)-\boldsymbol{F}_t, \widetilde{x}_{t+1}-x_t\rangle
\\
& \stackrel{(c)}{=}
\Phi(x_t)+\dfrac{L_\Phi\gamma_t^2}{2}\|\widetilde{x}_{t+1}-x_t\|^2-\dfrac{\gamma_t}{\eta}\|\widetilde{x}_{t+1}-x_t\|^2+\gamma_t\langle\nabla \Phi(x_t)-\boldsymbol{F}_t, \widetilde{x}_{t+1}-x_t\rangle
\\
& \stackrel{(d)}{\leq} \Phi(x_t)+\dfrac{L_\Phi\gamma_t^2}{2}\|\widetilde{x}_{t+1}-x_t\|^2-\dfrac{\gamma_t}{\eta}\|\widetilde{x}_{t+1}-x_t\|^2+\dfrac{L_\Phi\gamma_t^2}{2}\|\widetilde{x}_{t+1}-x_t\|^2
\\
& \qquad \qquad +\dfrac{1}{2L_{\Phi}}\|\boldsymbol{F}_t-\nabla \Phi(x_t)\|^2
\\
& = \Phi(x_t)-\left(\dfrac{\gamma_t}{\eta}-L_\Phi\gamma_t^2\right)\|\widetilde{x}_{t+1}-x_t\|^2+\dfrac{1}{2L_{\Phi}}\|\boldsymbol{F}_t-\nabla \Phi(x_t)\|^2
\\
&\stackrel{(e)}{=} \Phi(x_t)-\dfrac{1}{L_\Phi}\left(
\gamma_t-\gamma_t^2\right)\|\boldsymbol{F}_t\|^2+\dfrac{1}{2L_{\Phi}}\|\boldsymbol{F}_t-\nabla \Phi(x_t)\|^2
\\
&\stackrel{(f)}{\leq}
\Phi(x_t)-\dfrac{1}{2L_\Phi}\gamma_t\|\boldsymbol{F}_t\|^2+\dfrac{1}{2L_{\Phi}}\|\boldsymbol{F}_t-\nabla \Phi(x_t)\|^2 
\\
&\stackrel{(g)}{\leq}
\Phi(x_t)-\dfrac{\varepsilon}{2L_\Phi}\|\boldsymbol{F}_t\|+\dfrac{\varepsilon^2}{4L_\Phi}+\dfrac{1}{2L_{\Phi}}\|\boldsymbol{F}_t-\nabla \Phi(x_t)\|^2  \ .
\end{array}
\end{equation}
Here in (a), (b), (c) we used the fact that in the STORM-Compositional Algorithm 1, our main iteration for $x$ is given by $x_{t+1}=x_t+\gamma_t(\widetilde{x}_{t+1}-x_t)$, $\widetilde{x}_{t+1}=x_t-\eta\boldsymbol{F}_t$; in (d) we used the Cauchy-Schwarz inequality; in (e) we used the fact that we pick $\eta=\dfrac{1}{L_{\Phi}}$ as well as the fact that $\widetilde{x}_{t+1}=x_t-\eta \boldsymbol{F}_t$; in (f) we used the fact that in our STORM-Compositional Algorithm 1 we pick $0\leq \gamma_t\leq \dfrac{1}{2}$ and when $0\leq \gamma_t\leq \dfrac{1}{2}$ we have $\gamma_t-\gamma_t^2\geq \dfrac{1}{2}\gamma_t$; in (g) we used the fact that in our STORM-Compositional Algorithm 1 we pick $\gamma_t=\min\left\{\dfrac{\varepsilon}{\|\boldsymbol{F}_t\|}, \dfrac{1}{2}\right\}$ and that $\gamma_t\|\boldsymbol{F}_t\|^2=\varepsilon^2\left\{\|\boldsymbol{F}_t\|\varepsilon^{-1}, \dfrac{1}{2}\|\boldsymbol{F}_t\|^2\varepsilon^{-2}\right\}\geq \varepsilon\|\boldsymbol{F}_t\|-\dfrac{1}{2}\varepsilon^2$, the latter due to the elementary inequality $\min\left\{|x|, \dfrac{1}{2}x^2\right\}\geq |x|-\dfrac{1}{2}$ for all $x\in \mathbb{R}$. 

Summing \eqref{Theorem:MainTheoremConvergence:Eq:TaylorExpansion} from $t=0,1, ... ,T-1$ and taking expectation on both sides allows us to conclude that 
$$\begin{array}{ll}
\Phi^*\leq \mathbf{E} [\Phi(x_T)]& \leq \Phi(x_0)-\dfrac{\varepsilon}{2L_\Phi}\sum\limits_{t=0}^{T-1}\mathbf{E}\|\boldsymbol{F}_t\|+\dfrac{\varepsilon^2T}{4L_\Phi}+\dfrac{1}{2L_\Phi}\sum\limits_{t=0}^{T-1}\mathbf{E}\|\boldsymbol{F}_t-\nabla \Phi(x_t)\|^2 \ .
\end{array}$$

Rearranging the above and making use of Assumption \ref{Assumption:FiniteGap} we obtain that 
\begin{equation}\label{Theorem:MainTheoremConvergence:Eq:FinalEstimate:Step0}
\begin{array}{ll}
\dfrac{1}{T}\sum\limits_{t=0}^{T-1}\mathbf{E}\|\boldsymbol{F}_t\|&\leq \dfrac{2L_\Phi\Delta}{T\varepsilon}+\dfrac{\varepsilon}{2}+\dfrac{1}{\varepsilon}\left(\dfrac{1}{T}\sum\limits_{t=0}^{T-1}\mathbf{E}\|\boldsymbol{F}_t-\nabla \Phi(x_t)\|^2\right) \ .
\end{array}
\end{equation}

Since the output $\widehat{x}$ in Algorithm 1 is chosen uniformly randomly from $x_0,...,x_{T-1}$, the above further gives
\begin{equation}\label{Theorem:MainTheoremConvergence:Eq:FinalEstimate:Step1}
\begin{array}{ll}
\mathbf{E}\|\nabla \Phi(\widehat{x})\| & =\dfrac{1}{T}\sum\limits_{t=0}^{T-1}\mathbf{E}\|\nabla \Phi(x_t)\|
\\
&\stackrel{(a)}{\leq} \dfrac{1}{T}\sum\limits_{t=0}^{T-1}\left(\mathbf{E}\|\boldsymbol{F}_t\|+\mathbf{E}\|\nabla \Phi(x_t)-\boldsymbol{F}_t\|\right)
\\
&= \dfrac{1}{T}\sum\limits_{t=0}^{T-1}\mathbf{E}\|\boldsymbol{F}_t\|+\dfrac{1}{T}\sum\limits_{t=0}^{T-1}\mathbf{E}\|\nabla \Phi(x_t)-\boldsymbol{F}_t\|
\\
& = 
\dfrac{1}{T}\sum\limits_{t=0}^{T-1}\mathbf{E}\|\boldsymbol{F}_t\|+\left[\left(\dfrac{1}{T}\sum\limits_{t=0}^{T-1}\mathbf{E}\|\nabla \Phi(x_t)-\boldsymbol{F}_t\|\right)^2\right]^{1/2}
\\
& \stackrel{(b)}{\leq} 
\dfrac{1}{T}\sum\limits_{t=0}^{T-1}\mathbf{E}\|\boldsymbol{F}_t\|+\left[\dfrac{1}{T}\sum\limits_{t=0}^{T-1}\mathbf{E}\|\nabla \Phi(x_t)-\boldsymbol{F}_t\|^2\right]^{1/2} \ .
\end{array}
\end{equation} \
Here in (a) we used triangle inequality and in (b) we used the fact that $\left(\dfrac{1}{T}\sum\limits_{t=0}^{T-1}\mathbf{E}\|\nabla \Phi(x_t)-\boldsymbol{F}_t\|\right)^2\leq \sum\limits_{t=0}^{T-1}\dfrac{1}{T^2}\sum\limits_{t=0}^{T-1}\mathbf{E}\|\nabla \Phi(x_t)-\boldsymbol{F}_t\|^2=\dfrac{1}{T}\sum\limits_{t=0}^{T-1}\mathbf{E}\|\nabla \Phi(x_t)-\boldsymbol{F}_t\|^2$ by Cauchy-Schwarz inequality. Combining \eqref{Theorem:MainTheoremConvergence:Eq:FinalEstimate:Step0} and \eqref{Theorem:MainTheoremConvergence:Eq:FinalEstimate:Step1} we obtain that

\begin{equation}\label{Theorem:MainTheoremConvergence:Eq:FinalEstimate:Step2}
\begin{array}{ll}
\mathbf{E}\|\nabla \Phi(\widehat{x})\| & \leq \dfrac{2L_\Phi\Delta}{T\varepsilon}+\dfrac{\varepsilon}{2}+\dfrac{1}{\varepsilon}\left(\dfrac{1}{T}\sum\limits_{t=0}^{T-1}\mathbf{E}\|\boldsymbol{F}_t-\nabla \Phi(x_t)\|^2\right)+
\left(\dfrac{1}{T}\sum\limits_{t=0}^{T-1}\mathbf{E}\|\nabla \Phi(x_t)-\boldsymbol{F}_t\|^2\right)^{1/2} \ .
\end{array}
\end{equation}
Taking into account that $\dfrac{1}{T}\sum\limits_{t=0}^{T-1}\mathbf{E}\|\boldsymbol{F}_t-\nabla \Phi(x_t)\|^2\leq A\varepsilon^2$ by Proposition 3.1 in the main text of the paper, we proved the statement. 
\end{proof}

\section{Proof of Proposition 3.2}

\begin{proof}
We look at Theorem 1 and \eqref{Proposition:AsymptoticEpsAccuracy:Eq:EstimateF-To-gradPhi}, and we pick the $a$-parameters so that 

\begin{equation}\label{Eq:PickParameters-a-basedon-ErrorEquipartition}
\begin{array}{ll}
\dfrac{1}{T}\sum\limits_{t=0}^{T-1}\mathbf{E}\|\boldsymbol{g}_t-g(x_t)\|^2\leq \dfrac{\varepsilon^2}{9\cdot 16M_g^2L_f^2}  \ , 
\\
\dfrac{1}{T}\sum\limits_{t=0}^{T-1}\mathbf{E}\|\boldsymbol{G}_t-\partial g(x_t)\|_F^2\leq \dfrac{\varepsilon^2}{9\cdot 16M_f^2}  \ , 
\\
\dfrac{1}{T}\sum\limits_{t=0}^{T-1}\mathbf{E}\|\boldsymbol{F}_t-(\boldsymbol{G}_t)^T\nabla f(\boldsymbol{g}_t)\|^2\leq \dfrac{\varepsilon^2}{9\cdot 16}  \ . 
\end{array}
\end{equation}

Using \eqref{Corollary:ErrorEstimateToBottom:Eq:g}, Lemma \ref{Lemma:Incremental-x} and the fact that $\mathbf{E}\|\boldsymbol{g}_0-g(x_0)\|^2\leq \dfrac{H_3}{S_g}$, the first inequality in \eqref{Eq:PickParameters-a-basedon-ErrorEquipartition} is satisfied if we have

\begin{equation}\label{Eq:PickParameters-a-basedon-ErrorEquipartition-g}
\dfrac{2M_g^2}{\alpha_g\beta_gL^2_\Phi}+\dfrac{2H_3}{\frac{32}{3}L_\Phi\Delta \alpha_g\gamma_g}+\dfrac{2\alpha_gH_3}{\beta_g}\leq \dfrac{1}{9\cdot 16 M_g^2L_f^2} \ .
\end{equation}

To match inequality \eqref{Eq:PickParameters-a-basedon-ErrorEquipartition-g}, we set the three term equal, so that $\dfrac{2M_g^2}{\alpha_g\beta_gL_{\Phi}^2}=\dfrac{2H_3}{\frac{32}{3}L_{\Phi}\Delta\alpha_g\gamma_g}=\dfrac{2\alpha_gH_3}{\beta_g}$, this gives \begin{equation}\label{Eq:PickParameters-alpha-g}
\alpha_g= \dfrac{M_g}{L_\Phi\sqrt{H_3}} 
\end{equation} 
and $\gamma_g=\dfrac{\beta_gH_3L_\Phi}{\frac{32}{3}\Delta M_g^2}$. The constraint \eqref{Eq:PickParameters-a-basedon-ErrorEquipartition-g} then becomes a simple constraint $\dfrac{6M_g\sqrt{H_3}}{\beta_gL_\Phi}\leq\dfrac{1}{9\cdot 16 M_g^2L_f^2}$, giving $\beta_g\geq \dfrac{864M_g^3L_f^2\sqrt{H_3}}{L_\Phi}$. So we can choose \begin{equation}\label{Eq:PickParameters-beta-gamma-g}
\beta_g= \dfrac{864M_g^3L_f^2\sqrt{H_3}}{L_\Phi} \text{ and } \gamma_g=\dfrac{81M_gL_f^2H_3^{3/2
}}{\Delta} \ .    
\end{equation}

We pick $\alpha_{\partial g}$, $\beta_{\partial g}$, $\gamma_{\partial g}$ in an exactly the same way using \eqref{Corollary:ErrorEstimateToBottom:Eq:partialg}, Lemma \ref{Lemma:Incremental-x} and the fact that $\mathbf{E}\|\boldsymbol{G}_0-\partial g(x_0)\|^2\leq \dfrac{H_2}{S_{\partial g}}$. Thus the second inequality in \eqref{Eq:PickParameters-a-basedon-ErrorEquipartition} is satisfied if we have 

\begin{equation}\label{Eq:PickParameters-a-basedon-ErrorEquipartition-partial-g}
\dfrac{2L_g^2}{\alpha_{\partial g}\beta_{\partial g}L_{\Phi}^2}+\dfrac{2H_2}{\frac{32}{3}L_{\Phi}\Delta\alpha_{\partial g}\gamma_{\partial g}}+\dfrac{2\alpha_{\partial g}H_2}{\beta_{\partial g}}\leq \dfrac{1}{9\cdot 16 M_f^2} \ .    
\end{equation}

Again we set $\dfrac{2L_g^2}{\alpha_{\partial g}\beta_{\partial g}L_{\Phi}^2}=\dfrac{2H_2}{\frac{32}{3}L_{\Phi}\Delta\alpha_{\partial g}\gamma_{\partial g}}=\dfrac{2\alpha_{\partial g}H_2}{\beta_{\partial g}}$, and this gives 

\begin{equation}\label{Eq:PickParameters-alpha-partial-g}
\alpha_{\partial g}= \dfrac{L_g}{L_\Phi\sqrt{H_2}}  
\end{equation} 
and $\gamma_{\partial g}=\dfrac{\beta_{\partial g}H_2L_{\Phi}}{\frac{32}{3}\Delta L_g^2}$. The constraint \eqref{Eq:PickParameters-a-basedon-ErrorEquipartition-partial-g} then becomes a simple constraint $\dfrac{6L_g\sqrt{H_2}}{\beta_{\partial g}L_\Phi}\leq \dfrac{1}{9\cdot 16 M_f^2}$, giving $\beta_{\partial g}\geq\dfrac{864M_f^2L_g\sqrt{H_2}}{L_\Phi}$. So we can choose 

\begin{equation}\label{Eq:PickParameters-beta-gamma-partial-g}
\beta_{\partial g}= \dfrac{864M_f^2L_g\sqrt{H_2}}{L_\Phi} \text{ and } \gamma_{\partial g}=\dfrac{81M_f^2H_2^{3/2
}}{\Delta L_g} \ .    
\end{equation}

Finally we can pick the parameters for the $\boldsymbol{F}$-iteration. 

Indeed we first observe that we can write \eqref{Lemma:Increment-boldg:Eq:Estimate} as 

\begin{equation}\label{Eq:Increment-boldg-Via-g-Estimate}
\begin{array}{ll}
\dfrac{1}{T}\sum\limits_{t=0}^{T-1}\mathbf{E}\|\boldsymbol{g}_{t+1}-\boldsymbol{g}_t\|^2&\stackrel{(a)}{\leq} \dfrac{6}{T}\sum\limits_{t=0}^{T-1}\mathbf{E}\|\boldsymbol{g}_t-g(x_t)\|^2+3M_g^2\left[\dfrac{2}{B_g}+1\right]\dfrac{1}{T}\sum\limits_{t=0}^{T-1}\mathbf{E}\|x_{t+1}-x_t\|^2+\dfrac{6a_g^2H_3}{B_g} 
\\
&\stackrel{(b)}{\leq} \dfrac{6\varepsilon^2}{9\cdot 16 M_g^2L_f^2}+3M_g^2\left(1+\dfrac{2L_\Phi}{864M_g^3L_f^2\sqrt{H_3}}\varepsilon\right)\dfrac{1}{L_{\Phi}^2}\varepsilon^2+\dfrac{6}{864L_{\Phi}M_gL_f^2\sqrt{H_3}}\varepsilon^3
\\
&=\left(\dfrac{1}{24M_g^2L_f^2}+\dfrac{3M_g^2}{L_{\Phi}^2}\right)\varepsilon^2+\dfrac{1}{72L_{\Phi}M_gL_f^2\sqrt{H_3}}\varepsilon^3 \ .
\end{array}
\end{equation}
Here in (a) we used \eqref{Lemma:Increment-boldg:Eq:Step1} and in (b) we used \eqref{Eq:PickParameters-alpha-g}, \eqref{Eq:PickParameters-beta-gamma-g}. 

Similarly we have 

\begin{equation}\label{Eq:Increment-boldG-Via-G-Estimate}
\begin{array}{ll}
\dfrac{1}{T}\sum\limits_{t=0}^{T-1}\mathbf{E}\|\boldsymbol{G}_{t+1}-\boldsymbol{G}_t\|_F^2
&\stackrel{(a)}{\leq}
\dfrac{6}{T}\sum\limits_{t=0}^{T-1}\mathbf{E}\|\boldsymbol{G}_t-\partial g(x_t)\|_F^2+3L_g^2\left[\dfrac{2}{B_{\partial g}}+1\right]\dfrac{1}{T}\sum\limits_{t=0}^{T-1}\mathbf{E}\|x_{t+1}-x_t\|^2+\dfrac{6(a_{\partial g})^2H_2}{B_{\partial g}}
\\
& \stackrel{(b)}{\leq}\dfrac{6\varepsilon^2}{9\cdot 16M_f^2}+3L_g^2\left(1+\dfrac{2L_\Phi}{864M_f^2L_g\sqrt{H_2}}\varepsilon\right)\dfrac{1}{L_{\Phi}^2}\varepsilon^2+\dfrac{6L_g\varepsilon^3}{864L_{\Phi}M_f^2\sqrt{H_2}}
\\
&=\left(\dfrac{1}{24M_f^2}+\dfrac{3L_g^2}{L_{\Phi}^2}\right)\varepsilon^2+\dfrac{L_g}{72L_{\Phi}M_f^2\sqrt{H_2}}\varepsilon^3 \ .
\end{array}
\end{equation}
Here in (a) we used \eqref{Lemma:Increment-boldG:Eq:Step1} and in (b) we used \eqref{Eq:PickParameters-alpha-partial-g},  \eqref{Eq:PickParameters-beta-gamma-partial-g}.

We pick the desired precision parameter $\varepsilon$ such that \begin{equation}
0<\varepsilon<\min\left(1, 72L_{\Phi}M_gL_f^2\sqrt{H_3}, \dfrac{72L_{\Phi}M_f^2\sqrt{H_2}}{L_g}\right) \ ,
\end{equation}
so that 

\begin{equation}\label{Eq:Increment-g-boldg-Estimate}
\begin{array}{ll}
\dfrac{1}{T}\sum\limits_{t=0}^{T-1}\mathbf{E}\|\boldsymbol{g}_{t+1}-\boldsymbol{g}_t\|^2&\leq \left(\dfrac{1}{24M_g^2L_f^2}+\dfrac{3M_g^2}{L_{\Phi}^2}+1\right)\varepsilon^2 \ ,
\\
\dfrac{1}{T}\sum\limits_{t=0}^{T-1}\mathbf{E}\|\boldsymbol{G}_{t+1}-\boldsymbol{G}_t\|_F^2 &\leq \left(\dfrac{1}{24M_f^2}+\dfrac{3L_g^2}{L_{\Phi}^2}+1\right)\varepsilon^2 \ .
\end{array}
\end{equation}
We put these two estimates in \eqref{Lemma:ErrorEstimateToBottom:gradPhi:Eq:Error:Step1}, and notice that $2M_g+\dfrac{L_g\eta\varepsilon}{a_{\partial g}}=2M_g+\dfrac{L_g}{L_{\Phi}\frac{L_g}{L_{\Phi}}\sqrt{H_2}}=2M_g+\sqrt{H_2}$ so that the coefficient of $\dfrac{1}{a_\Phi B_f}$ is  estimated by 
\begin{equation}\label{Eq:boldF-Estimate-Coefficient-1-Over-aB}
\begin{array}{ll}
&\dfrac{4}{B_f}\left[M_f^2\sum\limits_{t=0}^{T-1} \mathbf{E}\|\boldsymbol{G}_{t+1}-\boldsymbol{G}_t\|_F^2+L_f^2\left(2M_g+\dfrac{L_g\eta\varepsilon}{a_{\partial g}}\right)^2\sum\limits_{t=0}^{T-1}\mathbf{E}\|\boldsymbol{g}_{t+1}-\boldsymbol{g}_t\|^2\right]    
\\
\leq &\left[\dfrac{4M_f^2}{B_f}\left(\dfrac{1}{24M_f^2}+\dfrac{3L_g^2}{L_{\Phi}^2}+1\right)+\dfrac{4L_f^2}{B_f}(2M_g+\sqrt{H_2})\left(\dfrac{1}{24M_g^2L_f^2}+\dfrac{3M_g^2}{L_{\Phi}^2}+1\right)\right]T\varepsilon^2 \ .
\end{array}
\end{equation}

Denote
\begin{equation}\label{Eq:K-0}
K_0=\dfrac{4M_f^2}{B_f}\left(\dfrac{1}{24M_f^2}+\dfrac{3L_g^2}{L_{\Phi}^2}+1\right)+\dfrac{4L_f^2}{B_f}(2M_g+\sqrt{H_2})\left(\dfrac{1}{24M_g^2L_f^2}+\dfrac{3M_g^2}{L_{\Phi}^2}+1\right) \ ,
\end{equation}
then \eqref{Lemma:ErrorEstimateToBottom:gradPhi:Eq:Error:Step1} gives 
$$\dfrac{1}{T}\sum\limits_{t=0}^{T-1}\mathbf{E}\|\boldsymbol{F}_t-(\boldsymbol{G}_t)^T\nabla f(\boldsymbol{g}_t)\|^2\leq \dfrac{K_0}{a_{\Phi}B_f}\varepsilon^2+\dfrac{2a_{\Phi}(2M_g+\sqrt{H_2})H_1}{B_f}+\dfrac{M_g^2H_1}{Ta_\Phi S_f} \ .$$
Here we used $\mathbf{E}\|\boldsymbol{F}_0-(\boldsymbol{G}_0)\nabla f(\boldsymbol{g}_0)\|^2\leq\dfrac{M_g^2H_1}{S_f}$. 
Taking into account that $T=\dfrac{32}{3}L_{\Phi}\Delta\varepsilon^{-2}$, $a_\Phi=\alpha_{\Phi}\varepsilon$ and $B_{f}=\beta_{f}\varepsilon^{-1}$, $S_{f}=\gamma_{f}\varepsilon^{-1}$, we see from here that the third inequality in \eqref{Eq:PickParameters-a-basedon-ErrorEquipartition} is reduced to 

\begin{equation}\label{Eq:PickParameters-a-basedon-ErrorEquipartition-F}
\dfrac{K_0}{\alpha_{\Phi}\beta_f}+\dfrac{M_g^2H_1}{\frac{32}{3}L_{\Phi}\Delta\alpha_{\Phi}\gamma_f}+\dfrac{2\alpha_{\Phi}(2M_g+\sqrt{H_2})H_1}{\beta_f}\leq \dfrac{1}{9\cdot 16} \ .
\end{equation}
Again we set $\dfrac{K_0}{\alpha_{\Phi}\beta_f}=\dfrac{M_g^2H_1}{\frac{32}{3}L_{\Phi}\Delta\alpha_{\Phi}\gamma_f}=\dfrac{2\alpha_{\Phi}(2M_g+\sqrt{H_2})H_1}{\beta_f}$, and this gives

\begin{equation}\label{Eq:PickParameters-alpha-Phi}
\alpha_{\Phi}= \sqrt{\dfrac{K_0}{2(2M_g+\sqrt{H_2})H_1}}
\end{equation} 
and $\gamma_{f}=\dfrac{M_g^2H_1}{\frac{32}{3}L_{\Phi}\Delta K_0}\beta_f$. The constraint \eqref{Eq:PickParameters-a-basedon-ErrorEquipartition-F} then becomes a simple constraint $\dfrac{3K_0}{\sqrt{\frac{K_0}{2(2M_g+\sqrt{H_2})H_1}}\beta_f}\leq \dfrac{1}{9\cdot 16}$, giving $\beta_{f}\geq 432\sqrt{2(2M_g+\sqrt{H_2})H_1K_0}$. So we can choose 

\begin{equation}\label{Eq:PickParameters-beta-gamma-F}
\beta_{f}= 432\sqrt{2(2M_g+\sqrt{H_2})H_1K_0} \text{ and } \gamma_{f}=\dfrac{M_g^2H_1}{\frac{32}{3}L_{\Phi}\Delta K_0}432\sqrt{2(2M_g+\sqrt{H_2})H_1K_0} \ .    
\end{equation}

With 
\eqref{Eq:PickParameters-beta-gamma-g}, \eqref{Eq:PickParameters-beta-gamma-partial-g}, \eqref{Eq:PickParameters-beta-gamma-F}, the total IFO complexity is given by 

\begin{equation}\label{Eq:IFO-Complexity-PreciseParameters}
\begin{array}{ll}
\text{IFO}&=
(\gamma_g+\gamma_{\partial g}+\gamma_{f})\varepsilon^{-1}+\dfrac{32}{3}L_{\Phi}\Delta(\beta_g+\beta_{\partial g}+\beta_f)\varepsilon^{-3}
\\
&=\left(\dfrac{81M_gL_f^2H_3^{3/2
}}{\Delta}+\dfrac{81M_f^2H_2^{3/2
}}{\Delta L_g}+\dfrac{M_g^2H_1}{\frac{32}{3}L_{\Phi}\Delta K_0}432\sqrt{2(2M_g+\sqrt{H_2})H_1K_0}\right)\cdot \varepsilon^{-1}
\\
& \qquad +\left(\dfrac{864M_g^3L_f^2\sqrt{H_3}}{L_\Phi}+\dfrac{864M_f^2L_g\sqrt{H_2}}{L_\Phi}+432\sqrt{2(2M_g+\sqrt{H_2})H_1K_0}\right)\cdot \varepsilon^{-3} \ .
\end{array}
\end{equation}
\end{proof}

\section{Proof of Corollary 3.1}

\begin{proof}

We claim that if we choose $\eta\sim \mathcal{O}(1)$, $a_{g}, a_{\partial g}, a_{\Phi}\sim \mathcal{O}(\varepsilon)$, $B_g, B_{\partial g}, B_f, S_g, S_{\partial g}, S_f\sim \mathcal{O}(\varepsilon^{-1})$ and $T\sim \mathcal{O}(\varepsilon^{-2})$, then we have 
\begin{equation}\label{Lemma:AsymptoticChoice-a-batchsize:Eq:Asymptotic-eps-estimate:g}
\dfrac{1}{T}\sum\limits_{t=0}^{T-1}\mathbf{E}\|\boldsymbol{g}_t-g(x_t)\|^2 \lesssim \mathcal{O}(\varepsilon^2) \ ,
\end{equation}

\begin{equation}\label{Lemma:AsymptoticChoice-a-batchsize:Eq:Asymptotic-eps-estimate:partialg}
\dfrac{1}{T}\sum\limits_{t=0}^{T-1}\mathbf{E}\|\boldsymbol{G}_t-\partial g(x_t)\|_F^2 \lesssim \mathcal{O}(\varepsilon^2) \ ,
\end{equation}

\begin{equation}\label{Lemma:AsymptoticChoice-a-batchsize:Eq:Asymptotic-eps-estimate:g-partialg-increment}
\dfrac{1}{T}\sum\limits_{t=0}^{T-1}\mathbf{E}\|\boldsymbol{g}_{t+1}-\boldsymbol{g}_t\|^2,
\dfrac{1}{T}\sum\limits_{t=0}^{T-1}\mathbf{E}\|\boldsymbol{G}_{t+1}-\boldsymbol{G}_t\|_F^2 \lesssim \mathcal{O}(\varepsilon^2) \ ,
\end{equation}

\begin{equation}\label{Lemma:AsymptoticChoice-a-batchsize:Eq:Asymptotic-eps-estimate:gradPhi}
\dfrac{1}{T}\sum\limits_{t=0}^{T-1}\mathbf{E}\|\boldsymbol{F}_t-\boldsymbol{G}_t\nabla f(\boldsymbol{g}_t)\|^2
\lesssim \mathcal{O}(\varepsilon^2) \ .
\end{equation}

Take \eqref{Lemma:AsymptoticChoice-a-batchsize:Eq:Asymptotic-eps-estimate:g} as an example, we recall that by \eqref{Corollary:ErrorEstimateToBottom:Eq:g} we have
$$\dfrac{1}{T}\sum\limits_{t=0}^{T-1}\mathbf{E}\|\boldsymbol{g}_t-g(x_t)\|^2
\leq\dfrac{2M_g^2}{Ta_gB_g}\sum\limits_{t=0}^{T-1}\mathbf{E}\|x_{t+1}-x_t\|^2+\dfrac{a_gH_3}{B_g}+\dfrac{1}{Ta_g}\mathbf{E}\|\boldsymbol{g}_0-g(x_0)\|^2 \ ,$$

In the above, when $a_g\sim \mathcal{O}(\varepsilon)$ and $B_g\sim \mathcal{O}(\varepsilon^{-1})$, we see that $a_gB_g\sim \mathcal{O}(1)$. Since $\|x_{t+1}-x_t\|^2\lesssim \mathcal{O}(\varepsilon^2)$ by our choice of $\eta$ and Lemma \ref{Lemma:Incremental-x}, we have $\dfrac{1}{T}\dfrac{2M_g^2}{a_gB_g}\sum\limits_{t=0}^{T-1}\mathbf{E}\|x_{t+1}-x_t\|^2\lesssim \mathcal{O}(\varepsilon^2)$, which settles the first term. For the second term, since $\dfrac{a_g}{B_g}\sim \mathcal{O}(\varepsilon^2)$, it is also settled. For the last term, notice that $\mathbf{E}\|\boldsymbol{g}_0-g(x_0)\|^2=\dfrac{1}{S_g}\mathbf{E}\|g_i(x_0)-g(x_0)\|^2\leq \dfrac{H_3}{S_g}$ by Assumption \ref{Assumption:FiniteVariance} and our with replacement sampling of $\mathcal{S}_0^g$. Then we have $\dfrac{1}{Ta_g}\mathbf{E}\|\boldsymbol{g}_0-g(x_0)\|^2\leq \dfrac{1}{T}\dfrac{H_3}{a_gS_g}\lesssim \mathcal{O}(\dfrac{1}{T})=\mathcal{O}(\varepsilon^2)$, which is again settled. 
We then see that \eqref{Corollary:ErrorEstimateToBottom:Eq:partialg} goes in exactly the same way. 

It is easy to see that \eqref{Lemma:AsymptoticChoice-a-batchsize:Eq:Asymptotic-eps-estimate:g-partialg-increment} is also valid just by \eqref{Lemma:Increment-boldg:Eq:Estimate} and \eqref{Lemma:Increment-boldG:Eq:Estimate}, as well as the arguments we used above.

Finally for \eqref{Lemma:AsymptoticChoice-a-batchsize:Eq:Asymptotic-eps-estimate:gradPhi}, we have to notice that by our choice of the asymptotics, we have $\dfrac{L_g\eta\varepsilon}{a_{\partial g}}\sim \mathcal{O}(1)$, as well as the fact that $\mathbf{E}\|\boldsymbol{F}_0-(\boldsymbol{G}_0)^T\nabla f(\boldsymbol{g}_0)\|^2\leq \dfrac{M_g^2H_1}{S_{f}}$, these combined with \eqref{Lemma:AsymptoticChoice-a-batchsize:Eq:Asymptotic-eps-estimate:g-partialg-increment} enables the validity of \eqref{Lemma:AsymptoticChoice-a-batchsize:Eq:Asymptotic-eps-estimate:gradPhi}.

Thus by Proposition 3.1 we see that the IFO complexity of Algorithm 1 to reach $\cO(\ve)$-accuracy is of order $\lesssim \mathcal{O}(\varepsilon^{-3})$.

\end{proof}

\section{More Experiments}

\subsection{Value Function Evaluation in Reinforcement Learning}

We carry another experiment for STORM-Compositional on the problem of value function evaluation in reinforcement learning, same as Section 4.2 in \cite{arXivSARAH-SCGD}. The target is to find the value function $V^\pi(s)$ of state $s$ under policy $\pi$ for an underlying Markov Decision Process. The value function $V^\pi(s)$ can be evaluated through Bellman equation (see \cite{[Sutton-Barto]})

\[V^\pi(s_1)=\E\left[r_{s_1,s_2}+\gm V^\pi(s_2)|s_1\right] \ ,\]
for all $s_1, s_2, ..., s_n\in \cS$, where $\cS$ represents the set of available states and $|\cS|=n$, and $r_{s_1,s_2}$ is the reward function. The value function evaluation task can be formulated as a minimization problem of the square loss

\[\sum\li_{s\in \cS}\left(V^\pi(s)-\sum\li_{s'\in \cS}P_{s,s'}\left(r_{s,s'}+\gm V^\pi(s')\right)\right)^2 \ .\]
 
Here $P_{s,s'}$ is the transition probability. Set $\widehat{V}^\pi(s)=P_{s,s'}\left(r_{s,s'}+\gm V^\pi(s')\right)$, then the above problem can be formulated as compositional optimization problem with the choice of $g$ and $f$ as following
(see \cite[Section 4.2]{arXivSARAH-SCGD})

\[g(s)=\left[V^\pi(s_1), ..., V^\pi(s_n), \widehat{V}^\pi(s_1), ..., \widehat{V}^\pi(s_n)\right] \ ,\]

\[f(w)=\sum\li_{i=1}^n (w_i-w_{n+i})^2 \ ,\]
where $w\in \R^n$ is the vector with the elements in $g(s)$ as components.

We use the same reinforcement learning model as in \cite[Section 4.2]{arXivSARAH-SCGD}, which has $400$ states and $10$ actions for each state, and we use the exact same way of sampling the transition probability as well as the reward function. In our experiment for STORM-Compositional, we use the same parameter settings as \cite{arXivSARAH-SCGD} for SARAH-C, VRSC-PG, SCGD and ASC-PG. But we do a simple parameter setting for our STORM-Compositional just using the orders of parameters, and also taking into account that we have to match the approximately same amount of IFO's for SARAH-C. We take $\eta=0.1, \ve=0.1, S_g=S_{\pt g}=100, S_{f}=1, B_g=B_{\pt g}=20, B_{f}=1, a_g=a_{\pt g}=a_{\Phi}=0.1$ and the results are plotted in Figure \ref{Fig_RL}, where left column are for $\Phi(x)-\Phi^*$ and right column are for the gradient norms as functions of IFO queries. It is seen that even for such simple and straightforward parameter setting, STORM-Compositional behaves much better than SARAH-C and other compositional optimization algorithms after sufficient numbers of iterations. 

\begin{figure}
\captionsetup{margin=0cm, justification=centering}
\centering
\includegraphics[height=8cm,  width=\textwidth]{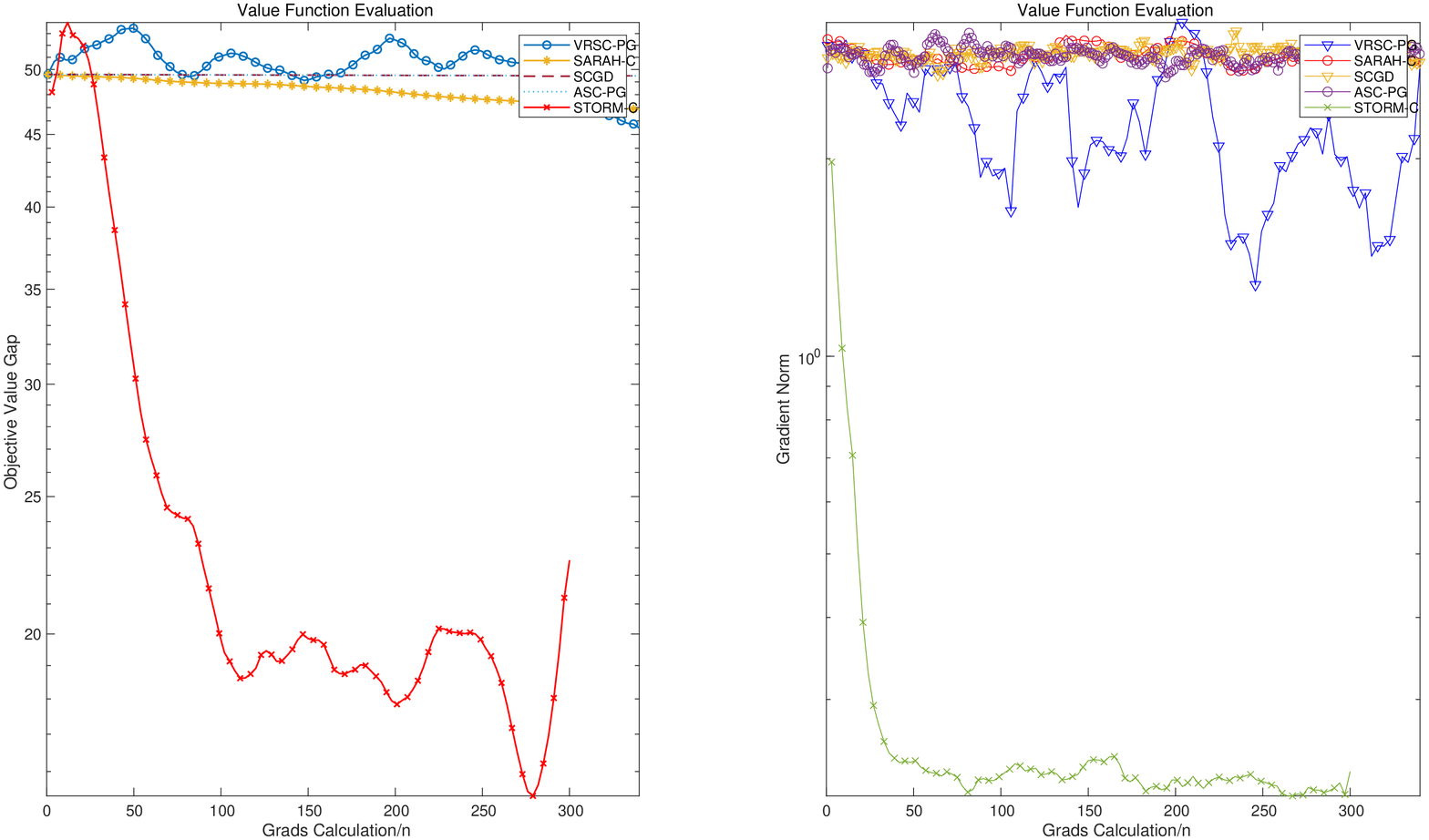}
\caption{STORM-Compositional compared with other compositional optimization algorithms for value function 
evaluation problem in Reinforcement Learning: Left Column: Objective Function Value Gap (vertical axis) vs. Gradient Calculations (horizontal axis); Right Column: Objective Function Gradient Norm (vertical axis) vs. Gradient Calculation (horizontal axis).}
\label{Fig_RL}
\end{figure}

\subsection{Stochastic Neighbor Embedding}

The Stochastic Neighbor Embedding (SNE, see \cite{HintonSNE}) is a dimension reduction method used very often in image classification tasks. The problem here can be formulated as follows: Let $z_1,...,z_n\in \R^N$ be a family of data points in a very high dimensional ($N>\!\!>1$) space. We want to find their low-dimensional embeddings $x_1,...,x_n\in \R^d$ so that the affinity $d(z_i, z_j)$ is preserved as much as we can. Here the affinity map can be taken as say $d(z_i, z_j)=\exp(-\|z_i-z_j\|^2/2\sm_i^2)$ for a sequence of standard deviations $\sm_1,...,\sm_n>0$. We choose the affinity map between the $y$'s as $\exp(-\|x_i-x_j\|^2/2)$. Then we can define the similarity between the $z$'s via the conditional probability

\begin{equation}\label{Eq:Similarity-x}
p_{j|i}=\dfrac{\exp(-\|z_i-z_j\|^2/2\sm_i^2)}{\sum\li_{k\neq i} \exp(-\|z_i-z_k\|^2/2\sm_i^2)} \ .
\end{equation}
Similarly, we can deifine the similarity between the $x$'s via the conditional probability

\begin{equation}\label{Eq:Similarity-y}
q_{j|i}=\dfrac{\exp(-\|x_i-x_j\|^2/2)}{\sum\li_{k\neq i}\exp(-\|x_i-x_k\|^2/2)} \ .
\end{equation}

Thus the problem can be formulated as minimizing the KL-divergence between the $p$ and $q$'s, i.e.

\begin{equation}\label{Eq:Objective-KL}
\min\li_{x_1,...,x_n\in \R^d}KL(x_1,...,x_n)=\sum\li_{i=1}^n \sum\li_{j=1, j\neq i}^n p_{j|i}\log\left(\dfrac{p_{j|i}}{q_{j|i}}\right) \ .
\end{equation}

In \cite{SCVR} the authors have reduced the above objective function as a compositional optimization prblem with the objective function

\begin{equation}\label{Eq:Objective-Compositional}
\begin{array}{ll}
\Phi(x_1,...,x_n)& =\dfrac{1}{n}\sum\li_{j=1}^n
f_j\left(\dfrac{1}{n}\sum\li_{i=1}^n g_i(x_1,...,x_n)\right)
\\
&=\sum\li_{j=1}^n
\left(\sum\li_{i=1}^n p_{j|i}\left(\|x_i-x_j\|^2+\log\left(\sum\li_{k=1}^n e^{-\|x_i-x_k\|^2}-1\right)\right)\right) \ .
\end{array}
\end{equation} 

Here we have, for $i=1,2,...,n$ and $j=1,2,...,n$, that 

\begin{equation}\label{Eq:Compositional-f}
\begin{array}{ll}
g_i(x)&=\left[x_1,...,x_n, ne^{-\|x_1-x_i\|^2}-1,...,
ne^{-\|x_n-x_i\|^2}-1\right]^\top\in \R^{dn+n} \ ,
\\
f_j(y)&=n\sum\li_{i=1}^n p_{j|i}\left(\|y_i-y_j\|^2+\log(y_{n+i})\right) \ .
\end{array}
\end{equation} 
Here we treat $x=(x_1,...,x_n)\in \R^{d\times n}$ as a matrix with column vectors $x_1,...,x_n\in \R^d$ and $y=(y_1,...y_n, y_{n+1},...,y_{2n})^\top\in \R^{dn+n}$ is such that

\begin{equation}\label{Eq:y}
y_k=\left\{\begin{array}{ll}
x_k & \text{ when } k=1,2,...,n \ ;
\\
ne^{-\|x_k-x_i\|^2}-1 &\text{ when } k=n+1,...,2n \ .
\end{array}\right.
\end{equation}
We can also treat $y$ as a matrix in $\R^{(d+1)\times n}$ such that $y=\begin{pmatrix}y_1,...,y_n\\
y_{n+1},...,y_{2n}
\end{pmatrix}$. Notice that here each $y_1,...,y_n$ is a column vector of dimension $d$ and the last row is a row vector $(y^{(2)})^\top=(y_{n+1},...,y_{2n})$. 

If we treat $x$ as a vector in $\R^{dn}$, then the Jacobian $\pt g_i(x)$ is a matrix of the form

\begin{equation}\label{Eq:partial-g}
\pt g_i(x)=
\begin{pmatrix}
I_d & 0 & 0 & ... & 0
\\
0 & I_d & 0 & ... & 0
\\
0 & 0 & I_d & ... & 0
\\
... & ... & ... & ... & ... 
\\
0 & 0 & 0   & ... & I_d
\\
\grad_{x_1}^\top g_{i,n+1}(x) & 
\grad_{x_2}^\top g_{i,n+1}(x) &
\grad_{x_3}^\top g_{i,n+1}(x) & ... & \grad_{x_n}^\top g_{i,n+1}(x)
\\
... & ... & ... & ... & ... 
\\
\grad_{x_1}^\top g_{i,n+n}(x) & 
\grad_{x_2}^\top g_{i,n+n}(x) &
\grad_{x_3}^\top g_{i,n+n}(x) & ... & \grad_{x_n}^\top g_{i,n+n}(x)
\end{pmatrix}_{(dn+n)\times dn} \ .
\end{equation}
Here we denote $g_{i, n+k}(x)=ne^{-\|x_k-x_i\|^2}-1$ and $\grad_{x_l}g_{i, n+k}(x)$ is the gradient vector with respect to $x_l$, which is in $\R^d$. It is easy to calculate that we have 

\begin{equation}\label{Eq:grad-vector-g-nto2n}
\begin{array}{l}
\grad_{x_k} g_{i, n+k} (x)=-2ne^{-\|x_k-x_i\|^2}(x_k-x_i) \ , 
\\
\grad_{x_i} g_{i, n+k}(x)=2ne^{-\|x_k-x_i\|^2}(x_k-x_i) \ ,
\\
\grad_{x_j} g_{i, n+k}(x)=0 \ , \text{ for } j \neq k, i \ .
\end{array}
\end{equation}

We then calculate $\grad f_j(y)$ as
$\grad f_j(y)=((\grad f_j^{(1)}(y))^\top, (\grad f_j^{(2)}(y))^\top)^\top$, so that  

\begin{equation}\label{Eq:grad-f-part1}
\begin{array}{ll}
\grad f_j^{(1)}(y)&
=\left(2np_{j|1}(y_1-y_j)^\top, ..., 
2np_{j|j-1}(y_{j-1}-y_j)^\top, \dfrac{\pt f_j^{(1)}}{\pt y_j}(y), \right.
\\
 & \qquad \qquad \left.
2np_{j|j+1}(y_{j+1}-y_j)^\top, ...
,
2np_{j|n}(y_n-y_j)^\top\right)^\top \ ,
\end{array}
\end{equation}
where 
\begin{equation}\label{Eq:grad-f-part1-j}
\dfrac{\pt f_j^{(1)}}{\pt y_j}(y)=-2n\sum\li_{k=1}^n p_{j|k}(y_k-y_j) \ .
\end{equation}
Moreover, 
\begin{equation}\label{Eq:grad-f-part2}
\grad f_j^{(2)}(y)
=\left(\dfrac{np_{j|1}}{y_{n+1}},...,\dfrac{np_{j|n}}{y_{n+n}}\right)^\top \ .
\end{equation}
It is easy to see that $\grad f_j^{(1)}(y)\in \R^{dn}$ and $\grad f_j^{(2)}(y)\in \R^n$. Thus $\grad f_j(y)\in \R^{dn+n}$. 

Based on these, we calculate $(\pt g_i(x))^T\grad f_j(y)$ as 

\begin{equation}\label{Eq:gradPhi}
\begin{array}{ll}
&(\pt g_i(x))^T\grad f_j(y)
\\
= &\grad f_j^{(1)}(y)+\begin{pmatrix}
\grad_{x_1}g_{i, n+1}(x)&...&\grad_{x_1}g_{i, n+n}(x)
\\
...&...&...
\\
\grad_{x_n}g_{i, n+1}(x)&...&\grad_{x_n}g_{i, n+n}(x)
\end{pmatrix}\grad f_j^{(2)}(y) 
\\
=&\grad f_j^{(1)}(y)+\begin{pmatrix}
\begin{pmatrix}\grad_{x_1}g_{i, n+1}(x)&...&\grad_{x_1}g_{i, n+n}(x)
\end{pmatrix}\grad f_j^{(2)}(y) 
\\
...
\\
\begin{pmatrix}\grad_{x_n}g_{i, n+1}(x)&...&\grad_{x_n}g_{i, n+n}(x)
\end{pmatrix}\grad f_j^{(2)}(y) 
\end{pmatrix}\ .
\end{array}
\end{equation}

We can treat $G_i(x)=\begin{pmatrix}
\grad_{x_1}g_{i, n+1}(x)&...&\grad_{x_1}g_{i, n+n}(x)
\\
...&...&...
\\
\grad_{x_n}g_{i, n+1}(x)&...&\grad_{x_n}g_{i, n+n}(x)
\end{pmatrix}$ as a tensor consisting of matrices $G_i(x)=\begin{pmatrix}G_{i,1}(x)\\...\\G_{i,n}(x)\end{pmatrix}$ such that $G_{i,l}(x)=\begin{pmatrix}
\grad_{x_l}g_{i, n+1}(x)&...&\grad_{x_l}g_{i, n+n}(x)
\end{pmatrix}$
for $l=1,2,...,n$. Taking into account \eqref{Eq:grad-vector-g-nto2n} we know that each component $G_{i,l}(x)$ of this tensor is given by a matrix consisting of $n$ column vectors in $\R^d$ such that the $i$-th column vector is $2ne^{-\|x_l-x_i\|^2}(x_l-x_i)$ and the $l$-th column vector is $-2ne^{-\|x_l-x_i\|^2}(x_l-x_i)$ and all elsewhere are $0$.

Following this perspective, we can also treat $\grad f_j^{(1)}(y)$ as a tensor equivalent to a matrix with $n$ components, so that $\grad f_j^{(1)}(y)=\begin{pmatrix}
\grad f_{j,1}^{(1)}(y) 
\\
...
\\
\grad f_{j,n}^{(1)}(y) 
\end{pmatrix}$. Here we have $\grad f_{j, l}^{(1)}(y)=2np_{j|l}(y_l-y_j)$ for $l=\{1,2,...,n\}\backslash\{j\}$ and $\grad f_{j, j}^{(1)}(y)=-2n\sum\li_{k=1}^n p_{j|k}(y_k-y_j)$. Notice that since $y_i=x_i$ for $i=1,2,...,n$, we indeed have 

$$\grad f_{j, l}^{(1)}(y)=2np_{j|l}(x_l-x_j) \text{ for } l=\{1,2,...,n\}\backslash\{j\} \ , \grad f_{j, j}^{(1)}(y)=-2n\sum\li_{k=1}^n p_{j|k}(x_k-x_j) \ .$$

Based on these above calculations, we have performed the experiment on SNE for our STORM-Compositional and compared it with VRSC-PG, SARAH-C, SCGD and ASC-PG. For the experiments on VRSC-PG, SARAH-C, SCGD and ASC-PG, we use the same set-up as in \cite{arXivSARAH-SCGD}. For STORM-Compositional, without much tuning of the parameters, we take $\eta=0.1, \ve=0.1, S_g=S_{\pt g}=S_{f}=100, B_g=B_{\pt g}=B_{f}=100, a_g=a_{\pt g}=a_{\Phi}=0.01$ and the results are plotted in Figure \ref{Fig_SNE}, where left column are for $\Phi(x)-\Phi^*$ and right column are for the gradient norms as functions of IFO queries. It is seen that even for such simple and straightforward parameter setting, STORM-Compositional behaves much better than SARAH-C and other compositional optimization algorithms after sufficient numbers of iterations.

\begin{figure}
\captionsetup{margin=0cm, justification=centering}
\centering
\includegraphics[height=8cm,  width=\textwidth]{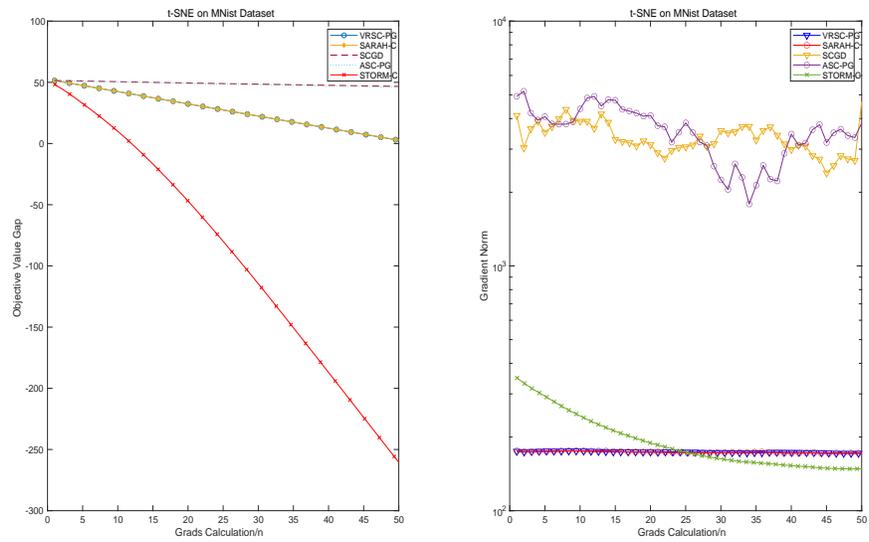}
\caption{STORM-Compositional compared with other compositional optimization algorithms for Stochastic Neighbor Embedding Problem: Left Column: Objective Function Value Gap (vertical axis) vs. Gradient Calculations (horizontal axis); Right Column: Objective Function Gradient Norm (vertical axis) vs. Gradient Calculation (horizontal axis).}
\label{Fig_SNE}
\end{figure}

\end{document}